\documentclass[11pt]{article}
\usepackage[a4paper,margin=28mm]{geometry}
\usepackage[T1]{fontenc}
\usepackage{lmodern}
\usepackage{microtype}
\usepackage{amsmath,amssymb,amsthm,mathtools}
\usepackage{bm}
\usepackage{enumitem} 
\usepackage{booktabs}
\usepackage{xcolor}
\usepackage{aliascnt}
\usepackage{hyperref}
\usepackage[nameinlink,noabbrev]{cleveref}
\hypersetup{colorlinks=true,linkcolor=blue!60!black,citecolor=blue!60!black,urlcolor=blue!60!black,
  pdftitle={Global Strong Solutions for the Affine Darken Class of Maxwell--Stefan Diffusion Systems},
  pdfauthor={Dieter Bothe}}
\allowdisplaybreaks

\newcommand{\R}{\mathbb R}

\newcommand{\one}{\mathbf 1}
\newcommand{\diag}{\operatorname{diag}}
\newcommand{\Div}{\operatorname{div}}

\newcommand{\sym}{\operatorname{sym}}
\newcommand{\cS}{\mathcal S}
\newcommand{\cZ}{\mathcal Z}
\newcommand{\cK}{\mathcal K}

\newcommand{\dd}{\,\mathrm d}
\newcommand{\eps}{\varepsilon}
\newcommand{\Span}{\operatorname{span}}
\DeclareRobustCommand{\MSD}{\mathord{\text{\normalfont\DH}}}

\newtheorem{theorem}{Theorem}[section]
\newaliascnt{lemma}{theorem}
\newtheorem{lemma}[lemma]{Lemma}
\aliascntresetthe{lemma}
\crefname{lemma}{lemma}{lemmas}
\Crefname{lemma}{Lemma}{Lemmas}
\newaliascnt{proposition}{theorem}
\newtheorem{proposition}[proposition]{Proposition}
\aliascntresetthe{proposition}
\crefname{proposition}{proposition}{propositions}
\Crefname{proposition}{Proposition}{Propositions}
\newaliascnt{corollary}{theorem}
\newtheorem{corollary}[corollary]{Corollary}
\aliascntresetthe{corollary}
\crefname{corollary}{corollary}{corollaries}
\Crefname{corollary}{Corollary}{Corollaries}
\newaliascnt{remark}{theorem}
\newtheorem{remark}[remark]{Remark}
\aliascntresetthe{remark}
\crefname{remark}{remark}{remarks}
\Crefname{remark}{Remark}{Remarks}
\newaliascnt{definition}{theorem}

\aliascntresetthe{definition}
\crefname{definition}{definition}{definitions}
\Crefname{definition}{Definition}{Definitions}
\title{\bf Global Strong Solutions for the Affine Darken Class of Maxwell--Stefan Diffusion Systems}
\author{Dieter Bothe\\[2pt]\normalsize Department of Mathematics, Technische Universit\"at Darmstadt}
\date{15 September 2026}

\begin{document}
\maketitle

\begin{abstract}
The core-diagonal closure of Bothe and Druet identifies multicomponent Darken diffusion as a thermodynamically consistent Maxwell--Stefan model in which the constitutive data are species tracer diffusivities. Building on the Stieltjes-coordinate and De Giorgi regularity framework introduced in our recent preprint for additive Maxwell--Stefan frictions, we prove global maximal-$L^p$ strong well-posedness, without smallness assumptions, for the affine Darken class with composition-dependent scalar coefficients and uniformly positive tracer diffusivities. We show that this is exactly the maximal Darken class preserving the fixed Stieltjes eigenframe underlying the global argument. For general smooth, uniformly positive Darken tracer diffusivities we also derive the direct constrained flux form, exact mixing-entropy symmetrization, Fisher dissipation, composition-uniform normal ellipticity, and local strong well-posedness. Solutions in the affine class remain uniformly separated from the simplex boundary, become classical for positive times, and converge exponentially to equilibrium. Constant multiplicative frictions, including repeated factors, are covered by grouping species with equal factors.
\end{abstract}

\noindent\textbf{Keywords.} Maxwell--Stefan diffusion; Darken equation; multicomponent diffusion; global strong solutions; cross-diffusion; entropy methods; De Giorgi regularity.

\noindent\textbf{2020 Mathematics Subject Classification.} Primary 35K57, 76R50; Secondary 35B65, 80A17, 80A32.

\section{Introduction and main result}
\label{sec:introduction}
The multicomponent Darken relation is one of the standard ways of expressing Maxwell--Stefan diffusivities in terms of species self- or tracer-diffusion coefficients.  It extends Darken's binary relation \cite{Darken1948} and was formulated for general multicomponent mixtures by Liu, Vlugt, and Bardow \cite{LiuVlugtBardow2011}.  In the continuum-thermodynamic framework of Bothe and Druet \cite[Section~8]{BotheDruet2023}, the same relation arises rigorously as the \emph{core-diagonal} special case of their direct closure: apart from the composition dependence of the tracer diffusivities, only the cross-coupling forced by the conservation constraint remains.  Their analysis therefore identifies the core-diagonal closure with multicomponent Darken diffusion and singles out this minimal-cross-effect case as a natural starting point for the unresolved large-data global strong-solution problem.

The proof uses the method developed for Maxwell--Stefan diffusion with additive friction coefficients in \cite{BotheAdditive2026}.  We adapt the Stieltjes spectral coordinates and entropy-stabilized Euler--Poisson--Darboux localization to the affine Darken operator.  Scalar energy and oscillation estimates yield H\"older continuity of the roots.  We then apply conormal Schauder estimates to the affine moment system and linear maximal-$L^p$ regularity to obtain global continuation and positive-time smoothing.

We consider the ideal isothermal/isobaric molar formulation at constant total molar concentration.  Let $d_i(x)>0$ denote the tracer diffusivity of species $i$ and set
\begin{equation}\label{eq:darken-Dmix-intro}
 S_D(x):=\sum_{k=1}^N\frac{x_k}{d_k(x)},
 \qquad D_{\rm mix}(x):=S_D(x)^{-1}.
\end{equation}
Thus $D_{\rm mix}$ is the mole-fraction-weighted harmonic mean of the tracer diffusivities.  We denote the Maxwell--Stefan pair diffusivities by the crossed letter $\MSD$, to distinguish them from the diagonal matrix $D$ below.  The multicomponent Darken prescription is
\begin{equation}\label{eq:darken-MS-intro}
 \MSD_{ij}(x)
 =\frac{d_i(x)d_j(x)}{D_{\rm mix}(x)}
 =d_i(x)d_j(x)S_D(x),
 \qquad i\ne j,
\end{equation}
so the positive binary frictions in our normalization are
\begin{equation}\label{eq:darken-friction-intro}
 f_{ij}(x)=\frac1{\MSD_{ij}(x)}
 =\frac{D_{\rm mix}(x)}{d_i(x)d_j(x)}.
\end{equation}
A direct inversion proved in \Cref{prop:inverse} gives
\begin{equation}\label{eq:darken-pde-intro}
 \partial_t x=\Div\!\big((I-x\one^{\sf T})D(x)\nabla x\big),
 \qquad D(x)=\diag(d_1(x),\ldots,d_N(x)),
\end{equation}
subject to $\sum_i x_i=1$ and homogeneous Neumann, i.e.\ no-flux, boundary conditions.  Thus the Darken operator is the compression of a diagonal tracer-diffusion operator to the tangent space of the composition simplex.

For general smooth, uniformly positive $d_i(x)$, this structure already yields an exact mixing-entropy symmetrization, Fisher dissipation, composition-uniform normal ellipticity, and local maximal-$L^p$ strong well-posedness.  The physical eigenvalues lie between $\min_i d_i(x)$ and $\max_i d_i(x)$.  Our global regularity argument uses additional geometry.  We call
\begin{equation}\label{eq:affine-darken}
 d_i(x)=\sigma(x)+\tau(x)\delta_i,
 \qquad 0<\delta_1<\delta_2<\cdots<\delta_N,
\end{equation}
the \emph{affine Darken class} associated with the fixed species parameters $\delta_1,\ldots,\delta_N$.  Here $\sigma(x)$ is a common diffusivity offset, while $\tau(x)$ scales the fixed pattern of species differences:
\[
 d_i(x)-d_j(x)=\tau(x)(\delta_i-\delta_j).
\]
The physical transport coefficients are the tracer diffusivities $d_i$, which are required to remain uniformly positive.  Neither $\sigma$ nor $\tau$ is required to be positive separately, and both may depend on the full composition.  The parameters $\delta_i$ specify the relative species dependence; they are not additional independently prescribed tracer diffusivities.  Section~\ref{sec:affine-darken} discusses the normalization and interpretation of this representation.

On the tangent space $E=\{q:\one^{\sf T}q=0\}$,
\begin{equation}\label{eq:affine-compression-intro}
 (I-x\one^{\sf T})D(x)|_E
 =\sigma(x)I_E+\tau(x)(I-x\one^{\sf T})D_0|_E,
 \qquad D_0=\diag(\delta_1,\ldots,\delta_N).
\end{equation}
At each composition, the operator therefore preserves the Stieltjes frame generated by the fixed pole list $\delta=(\delta_1,\ldots,\delta_N)$.  Throughout this paper, a \emph{fixed Stieltjes frame} means that the pole list is fixed; the frame vectors $q_j(x)$ themselves still depend on the composition.  \Cref{prop:fixed-frame-characterization} shows that this affine family is the maximal Darken class preserving that prescribed fixed-pole frame.

With the distinct-pole hypothesis made explicit, the three levels treated in the paper are nested as
\[
 \begin{aligned}
  \text{general Darken class}
  &\ \supset\ \text{affine Darken class}\\
  &\ \supset\ \left\{
    \begin{gathered}
     \text{constant multiplicative frictions}\\[-2pt]
     \text{with pairwise distinct factors}
    \end{gathered}\right\}.
 \end{aligned}
\]
At the first level we obtain entropy structure, uniform ellipticity, and local strong theory; at the second, global strong solvability without a smallness assumption; the third is the distinct-factor part of the constant multiplicative-friction model.  The last inclusion follows, after relabeling the species, by taking
\begin{equation}\label{eq:Bdelta-intro}
 B_\delta(x):=\sum_{k=1}^N\frac{x_k}{\delta_k},
 \qquad \sigma(x)=0,
 \qquad \tau(x)=B_\delta(x)^{-1}.
\end{equation}
Then $d_i=\delta_i/B_\delta$, and \eqref{eq:darken-MS-intro} gives $\MSD_{ij}=\delta_i\delta_j$ and $f_{ij}=(\delta_i\delta_j)^{-1}$.  Thus constant multiplicatively separated frictions with distinct factors form a constant-Maxwell--Stefan-diffusivity subclass of the affine Darken class.  The all-equal friction array $f_{ij}=f_0>0$ is also included, by taking $\sigma=f_0^{-1}$ and $\tau=0$ with any distinct positive pole list.  Repeated factors that are not all equal do not admit this strict-pole representation; see \Cref{cor:darken-constant-corner}.  Nevertheless, \Cref{app:repeated-factors} proves the same global conclusions for them by an exact grouping argument.  Thus all positive constant multiplicative frictions are covered: distinct factors by the main theorem, equal factors by the heat equation, and the remaining cases by grouping.

For three species, every positive constant friction triple has a multiplicative factorization.  Indeed, if
$f_{12}=\alpha$, $f_{13}=\beta$, and $f_{23}=\gamma$ are arbitrary positive constants, then
\[
 b_1=\sqrt{\frac{\alpha\beta}{\gamma}},\qquad
 b_2=\sqrt{\frac{\alpha\gamma}{\beta}},\qquad
 b_3=\sqrt{\frac{\beta\gamma}{\alpha}}
\]
satisfy $f_{ij}=b_i b_j$.  Taking $\widehat\delta_i=b_i^{-1}$ in
\Cref{cor:constant-friction} therefore covers every positive constant ternary
friction array for strictly positive compatible initial compositions,
including coincident frictions.  Initial data with vanishing species are
outside the hypotheses here.  The relation to the ternary preprint
\cite{BotheTernary2026} is discussed in the following literature overview.

\subsection{Main theorem}
Let
\[
 \cS=\left\{x\in\R^N:\ x_i\ge0,\ \sum_{i=1}^N x_i=1\right\},
 \qquad \cS^\circ=\{x\in\cS:\ x_i>0\}.
\]
Let $\Omega\subset\R^d$, $d\ge2$, be bounded and connected of class $C^{2+\alpha}$, $0<\alpha<1$, and fix $p>d+2$.  Let $\sigma,\tau$ be $C^3$ on a Euclidean neighborhood of $\cS$, define $d_i$ by \eqref{eq:affine-darken}, and assume
\begin{equation}\label{eq:darken-uniform-bounds}
 0<d_*\le d_i(x)\le d^*<\infty
 \qquad (x\in\cS,\ i=1,\ldots,N).
\end{equation}
With $m=N-1$ put
\[
 X_0=L^p(\Omega;\R^m),\qquad
 X_1=\{v\in W^{2,p}(\Omega;\R^m):\partial_\nu v=0\},
\]
\[
 X_{\gamma,p}=(X_0,X_1)_{1-1/p,p}
 =B^{2-2/p}_{p,p,N}(\Omega;\R^m)
 \hookrightarrow C^{1+\eta}(\overline\Omega;\R^m),
 \qquad 0<\eta<1-\frac{d+2}{p},
\]
where the subscript $N$ denotes the homogeneous Neumann trace condition.  This trace is well defined because $2-2/p>1+1/p$; the stronger hypothesis $p>d+2$ also gives the displayed $C^{1+\eta}$ embedding.  All maximal-regularity spaces below carry their ordinary intersection norm unless a trace term is written explicitly.  No compatibility condition of order higher than $\partial_\nu x^0=0$ is imposed: the classical regularity conclusion starts at positive times.

\begin{theorem}[Global strong solvability for the affine Darken class]\label{thm:main}
Let $N\ge3$, let $0<\delta_1<\cdots<\delta_N$, and assume \eqref{eq:affine-darken} and \eqref{eq:darken-uniform-bounds}.  Let $x^0\in B^{2-2/p}_{p,p}(\Omega;\R^N)$ satisfy
\begin{equation}\label{eq:initial-data}
 \sum_{i=1}^N x_i^0=1,\qquad
 \inf_{\overline\Omega}x_i^0>0\quad(i=1,\ldots,N),
 \qquad \partial_\nu x_i^0=0.
\end{equation}
Then the Darken diffusion system \eqref{eq:darken-pde-intro}, equivalently the ideal Maxwell--Stefan system \eqref{eq:MS-law} with pair diffusivities \eqref{eq:darken-MS-intro}, has a unique global maximal-$L^p$ strong solution.  In reduced concentration variables $u=(x_1,\ldots,x_{N-1})$,
\[
 u\in W^{1,p}_{\rm loc}([0,\infty);X_0)
 \cap L^p_{\rm loc}([0,\infty);X_1)
 \cap C([0,\infty);X_{\gamma,p}).
\]
The solution remains in a compact subset of $\cS^\circ$.  There is a $\beta\in(0,\alpha)$ such that, for every $\tau_0>0$, the solution belongs uniformly on positive-time unit slabs to $C_{t,x}^{1+\beta/2,2+\beta}$; in particular it is classical on $[\tau_0,\infty)\times\overline\Omega$.  If
\[
 \overline x_i=|\Omega|^{-1}\int_\Omega x_i^0\,\dd x,
\]
then the relative mixing entropy decays exponentially to the homogeneous equilibrium $\overline x$; consequently $x(t)\to\overline x$ exponentially in $L^2(\Omega)$ and, after positive-time smoothing, in $C^1(\overline\Omega)$.
\end{theorem}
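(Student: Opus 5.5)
The plan is a continuation argument: local maximal-$L^p$ well-posedness together with a priori estimates that exclude blow-up and degeneration at the simplex boundary. First I would invoke the local theory already available for general smooth, uniformly positive Darken diffusivities. In the reduced variables $u$ the system is quasilinear with normally elliptic principal part, uniformly in the composition, and the Neumann condition is compatible. The standard quasilinear maximal-$L^p$ theorem (Prüss type, $p>d+2$) then gives a unique maximal solution on $[0,T^+)$. This solution continues as long as $u(t)$ stays in a compact subset of $\cS^\circ$ and is relatively compact in $X_{\gamma,p}$. Two things therefore have to be shown for every finite $T$: a uniform lower bound $x_i\ge\kappa_T>0$ on $[0,T)$, and a uniform Hölder bound on $u$ and $\nabla u$ strong enough to control the $X_{\gamma,p}$ norm.

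For the lower bound and the Hölder continuity I would use the affine structure \eqref{eq:affine-compression-intro}. On $E$ the operator is $\sigma I_E+\tau(I-x\one^{\sf T})D_0$, which preserves the fixed-pole Stieltjes frame. I would pass to Stieltjes spectral coordinates, that is, the roots $\lambda_j(t,y)$ of
\[
 \sum_k \frac{x_k}{\delta_k-\lambda}=0,
\]
which interlace with the fixed poles $\delta_1<\cdots<\delta_N$. In these coordinates the system decouples to leading order into scalar parabolic equations for each root $\lambda_j$. The diffusion coefficient of the $j$th equation is $\sigma+\tau\lambda_j$, which lies between $d_*$ and $d^*$ by interlacing. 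The lower-order coupling has the Euler--Poisson--Darboux form, with singular drift terms as $\lambda_j$ approaches a pole. The boundary of $\cS$ corresponds to roots colliding with poles. I would use the mixing-entropy and Fisher dissipation identity, which give $\nabla\sqrt{x_i}\in L^2_{t,x}$, to stabilize the singular drift, and then run the De Giorgi energy and oscillation scheme of \cite{BotheAdditive2026} on each scalar root. This yields Hölder continuity of the $\lambda_j$ and a positive distance from the poles on $[0,T)$, hence Hölder continuity of $x$ and separation from $\partial\cS$.

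With continuous, separated roots, I would apply conormal Schauder estimates to the affine moment system, which is linear in $x$ up to the scalar coefficients $\sigma(x)$ and $\tau(x)$, now known to be Hölder. This gives $C^{1+\beta/2,2+\beta}$ bounds away from $t=0$. Near $t=0$ I would use linear maximal-$L^p$ regularity with Hölder coefficients, so the $X_{\gamma,p}$ norm stays bounded and $T^+=\infty$. The same estimates on unit slabs give uniform positive-time classical regularity.

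For the long-time behavior, entropy dissipation together with the Fisher bound and the logarithmic Sobolev/Poincaré inequality, applied with eigenvalues at least $d_*$, gives exponential decay of the relative entropy. Csiszár--Kullback then yields $L^2$ decay, and interpolation with the uniform $C^{2+\beta}$ bounds yields $C^1$ decay. Uniformity of the separation in time follows from convergence to the interior point $\overline x$ combined with the finite-time bounds.

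I expect the main obstacle to be the uniform-in-time separation from the poles, that is, the De Giorgi step with the singular EPD drift. The plan there is to test with entropy-weighted truncations as in \cite{BotheAdditive2026}, and to check that the extra composition dependence of $\sigma$ and $\tau$ contributes only bounded, lower-order terms.
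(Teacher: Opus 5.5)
Your overall architecture matches the paper's: local theory, Stieltjes roots, scalar De Giorgi, conormal Schauder on the affine moments, and log-Sobolev relaxation. The gap is in how you get separation from $\partial\cS$. You expect the De Giorgi scheme, with entropy/Fisher control taming a singular drift, to give a positive distance of the roots from the poles on $[0,T)$. It cannot, for three reasons. First, every constant in the Caccioppoli, logarithmic, critical-mass and oscillation estimates depends on two-sided bounds for $\alpha_j$, on bounds for the Cauchy coefficients $(\alpha_j+\alpha_k)/(z_j-z_k)$, and on $m_x'(z_j)$. These are available only once the roots are known to lie in a compact rectangle $R\Subset\cZ$. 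Second, the bound $\nabla\sqrt{x_i}\in L^2_{t,x}$ is far below the integrability De Giorgi needs to treat $(z_j-z_k)^{-1}\nabla z_k$ as a drift. Third, an oscillation estimate by itself never keeps a bounded function away from the endpoints of its range. In the paper, entropy stabilization does something else. On a fixed rectangle, where nothing is singular, it supplies the transverse dissipation $r^2\sum_{j\ne i}|\nabla z_j|^2$ that absorbs the localization fluxes and the cross-root drift in the logarithmic estimate.

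The missing idea is elementary. In the $j$th root equation the cross coupling $\frac{\alpha_j+\alpha_k}{z_j-z_k}\nabla z_j\cdot\nabla z_k$ is a drift acting on $\nabla z_j$ itself, so each root satisfies a scalar maximum principle. Test with $(z_j-M)_+$ and $(M'-z_j)_+$ up to the first exit from a slightly enlarged rectangle, where the strong solution has bounded gradients and separated root gaps. Gronwall then shows that $\min_y z_j$ is nondecreasing and $\max_y z_j$ is nonincreasing. The constant may depend on the solution, which is harmless because the truncation vanishes initially. Hence the initial root rectangle is invariant for all time (\Cref{thm:root-rectangle}), and \eqref{eq:inverse-root} gives $x_i\ge\kappa$ with $\kappa$ independent of $t$. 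This step must come first, because it lets every later scalar and system estimate run with one fixed set of structural constants.

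Without it, your closing argument is circular. You want uniform-in-time separation from $x(t)\to\overline x$. But $C^1$ convergence rests on uniform $C^{2+\beta}$ bounds on all unit slabs, and their constants depend on a time-uniform root rectangle, which is exactly the separation you are trying to deduce. Entropy or $L^2$ decay alone gives no pointwise closeness.

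There are also smaller issues in the Schauder step. The moment system is not linear up to $\sigma,\tau$: its $\ell$th flux is $\sigma\nabla U_\ell+\tau(\nabla U_{\ell+1}-U_\ell\nabla U_1)$. More importantly, $A_M$ is only normally elliptic, not Legendre-elliptic in Euclidean form, so conormal Schauder theory for systems does not apply as it stands. The paper conjugates by the frozen symmetrizer $H_M(U_*)^{1/2}$ on cylinders small enough that the De Giorgi H\"older modulus keeps $U$ near $U_*$. It also needs the first-order $C^{(1+\beta)/2,1+\beta}$ estimate before the coefficients are regular enough for the second-order one; Hölder coefficients alone do not give $C^{1+\beta/2,2+\beta}$ directly.
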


No smallness assumption is imposed on the strictly positive initial composition.

\begin{corollary}[Constant multiplicative frictions]\label{cor:constant-friction}
Let $N\ge3$, assume the domain, exponent, and initial-data hypotheses of
\Cref{thm:main}, and let $\widehat\delta_1,\ldots,\widehat\delta_N>0$,
not necessarily distinct.  Set
\[
 B_{\widehat\delta}(x)=\sum_{k=1}^N\frac{x_k}{\widehat\delta_k},
 \qquad d_i(x)=\frac{\widehat\delta_i}{B_{\widehat\delta}(x)}.
\]
Then the Darken Maxwell--Stefan diffusivities are constant,
\begin{equation}\label{eq:constant-MS-diffusivities}
 \MSD_{ij}=\widehat\delta_i\widehat\delta_j,
 \qquad f_{ij}=\frac1{\widehat\delta_i\widehat\delta_j},
\end{equation}
and all conclusions of \Cref{thm:main} hold.  In particular, for $N=3$
this covers every positive constant binary-friction array.
\end{corollary}
\begin{proof}
Here $S_D=B_{\widehat\delta}^2$, hence
$\MSD_{ij}=d_id_jS_D=\widehat\delta_i\widehat\delta_j$.
The function $B_{\widehat\delta}$ is positive on the closed simplex, so
its reciprocal is smooth on a neighborhood and the tracer diffusivities
are uniformly positive and bounded.  If the factors are distinct,
\Cref{thm:main} applies after relabeling, with $\sigma=0$ and
$\tau=B_{\widehat\delta}^{-1}$.  If all factors equal $\eta$, the system
is $x_t=\eta^2\Delta x$ and is also represented by $\sigma=\eta^2$,
$\tau=0$ with any distinct positive pole list.  The remaining cases follow
from \Cref{prop:repeated-factor-extension}, whose proof groups equal
factors and does not pass to a limit in the root gaps.  The ternary
assertion follows from the positive factorization displayed above.
\end{proof}

\subsection{Relation to existing well-posedness theory}
\label{sec:literature}
The additive-friction preprint \cite{BotheAdditive2026} establishes global strong solutions without a smallness assumption by means of Stieltjes coordinates and entropy-stabilized De Giorgi estimates.  The present paper applies that method to the affine Darken class, in which the compressed diagonal structure occurs on the mobility side.

For the normalized ideal, isothermal, isobaric ternary system with equal molar masses, unit total density, and no reactions, \cite[Theorem~1.1]{BotheTernary2026} proves global strong well-posedness for arbitrary positive constant binary frictions, including coincident friction coefficients, on bounded connected $C^{2+\alpha_0}$ domains in every dimension $d\ge1$, where $0<\alpha_0<1$.  The theorem allows compatible nonnegative $W_p^{2-2/p}$ initial compositions, $p>d+2$, including data on the boundary of the concentration simplex, without a smallness assumption.  Every species with positive conserved mass becomes strictly positive for positive time.  Within this normalized setting, \Cref{cor:constant-friction} recovers global strong well-posedness for strictly positive compatible data in dimensions $d\ge2$, with repeated factors treated in \Cref{app:repeated-factors}.  The proof given here is independent of the ternary theorem.  The present framework additionally treats every $N\ge3$ and the composition-dependent affine Darken class specified in \Cref{thm:main}.

For Maxwell--Stefan diffusion, local strong well-posedness is by now well established, whereas unrestricted global strong solvability is much more delicate.  Normal ellipticity and the local quasilinear theory for the isothermal diffusion subsystem were identified in \cite{Bothe2011}.  Herberg, Meyries, Pr\"uss, and Wilke developed a bounded-domain maximal-$L^p$ theory for a reactive mass-based system, including local semiflow, regularization, positivity, and stability of positive equilibria \cite{HerbergMeyriesPrussWilke2017}.  Related strong theories for complete or coupled mixture models include the entropic symmetrization and near-equilibrium results of Giovangigli and collaborators \cite{GiovangigliMassot1998Local,GiovangigliMassot1998Stability,Giovangigli1999,GiovangigliMatuszewski2013}, coefficient-restricted three-species systems \cite{BoudinGrecSalvarani2012}, perturbative global Cauchy theory \cite{BondesanBriant2022}, compressible-flow results \cite{PiaseckiShibataZatorska2019Local,PiaseckiShibataZatorska2019Strong}, and the class-one mixture theories in \cite{BotheDruet2021,BotheDruetIncompressible2021,Druet2022}.  These results are local, perturbative, structurally restricted, or concern larger flow systems; they do not yield the bounded-domain large-data conclusion of \Cref{thm:main} for arbitrary strictly positive initial compositions.

Beyond these structured multicomponent and unrestricted constant-friction ternary results, the general global existence theory is predominantly at the weak-solution level.  Global weak existence and positivity were obtained in \cite{MarionTemam2015}; for the classical isothermal/isobaric bounded-domain system with positive constant binary diffusivities, J\"ungel and Stelzer proved global bounded weak solutions and exponential convergence by entropy methods \cite{JungelStelzer2013}.  Reactive and coupled extensions include \cite{DausJungelTang2020,ChenJungel2015,MuchaPokornyZatorska2015,DruetJungel2020,Druet2021,HelmerJungel2021,GeorgiadisJungel2024}.  The qualitative theory has been sharpened by weak--strong uniqueness \cite{HuoJungelTzavaras2022,GeltnerHeitzingerJungel2026}, renormalized solutions and uniqueness criteria \cite{GeorgiadisKimTzavaras2024}, entropy equality \cite{BerselliGeorgiadisTzavaras2025}, and partial regularity for entropy-structured cross-diffusion systems \cite{BraukhoffRaithelZamponi2022}.  These advances underline the gap addressed here: global weak control is available much more generally than global strong regularity.

Liu, Vlugt, and Bardow \cite{LiuVlugtBardow2011} introduced the multicomponent prescription as a predictive model for composition-dependent Maxwell--Stefan diffusivities; Bothe and Druet \cite[Section~8]{BotheDruet2023} derived the same structure from continuum thermodynamics and identified the $d_i$ with tracer diffusivities.  Related transport interpretations occur, for example, in polymer mixtures \cite{RonsinHarting2019}.  The contribution of \Cref{thm:main} is the bounded-domain, large-data strong theory for the explicitly specified family $d_i(x)=\sigma(x)+\tau(x)\delta_i$, for every $N\ge3$, with full composition dependence of both scalar functions.  Its scope is the ideal, unforced diffusion subsystem with uniformly positive compatible initial data.  It does not assert global strong solvability for arbitrary composition-dependent tracer diffusivities in the Darken closure, for nonideal driving forces, or for coupled flow systems.

\subsection{Structure of the proof and relation to the additive-friction class}
The proof follows the Stieltjes-coordinate and entropy-stabilized De Giorgi method of \cite{BotheAdditive2026}.  For additive frictions $f_{ij}=g_i+g_j$, the compressed diagonal structure belongs to the resistance operator.  In the affine Darken class it belongs to the mobility operator: \eqref{eq:affine-compression-intro} is an affine combination of the identity and the compression generated by the fixed pole matrix $D_0$.  The Stieltjes inverse map and EPD connection identities therefore apply, with modal diffusivities
\begin{equation}\label{eq:alpha-intro}
 \alpha_j(z)=\sigma(x(z))+\tau(x(z))z_j.
\end{equation}
These coefficients may depend on their selected root coordinates.  The EPD production identity remains valid because its divergence-form chain rule introduces no selected-root derivative of the modal coefficient; the argument does not require $\partial_{z_j}\alpha_j=0$.

The proof proceeds in four stages.
\begin{enumerate}[label=(\roman*)]
\item Local strong solvability and the root equations imply an invariant
root rectangle and uniform species separation.
\item Stabilized scalar energy estimates imply H\"older regularity of the
roots and first-order conormal regularity of the affine moment system.
\item Time H\"older and spatial Lipschitz estimates yield the terminal
trace bound and global continuation.
\item Second-order conormal regularity and entropy dissipation imply
classical regularity and exponential convergence to equilibrium.
\end{enumerate}
The $C^1$ regularity on compact strong-solution intervals justifies the tests;
the scalar regularity constants are independent of its norm.  The second
conormal estimate is used only after continuation.

The Stieltjes geometry, multi-EPD cancellation, scalar iteration, affine-moment identities, and continuation method originate in \cite{BotheAdditive2026}.  Here we derive the Darken inversion, fixed-frame characterization, entropy symmetrization, and root equations, and verify the coefficient bounds, transformed boundary tensor, and frozen operators required for this constitutive law.  Section~\ref{sec:structure-local} establishes the Darken structure and local strong theory.  Section~\ref{sec:stieltjes-block} develops the root coordinates, their invariant rectangle, and the affine moments.  Section~\ref{sec:epd-dg-block} proves the entropy-stabilized localization and scalar regularity estimates.  Section~\ref{sec:continuation-block} establishes continuation, positive-time smoothing, and relaxation.

\section{Multicomponent Darken structure and local strong theory}
\label{sec:structure-local}
\subsection{Darken closure and direct constrained inversion}
\label{sec:inverse}
We work with the normalized ideal molar Maxwell--Stefan system
\begin{equation}\label{eq:MS-law}
 \partial_t x_i+\Div j_i=0,\qquad
 \sum_{i=1}^N j_i=0,
 \qquad
 \nabla x_i=\sum_{k\ne i}f_{ik}(x)(x_i j_k-x_kj_i),
\end{equation}
with $j_i\cdot\nu=0$ on $\partial\Omega$.  This is the standard ideal isothermal/isobaric formulation at constant total molar concentration in the molar-average frame, in the same normalization used in \cite{BotheAdditive2026}.

Let $d_i(x)>0$ be arbitrary tracer diffusivities and define $S_D,D_{\rm mix}$ as in \eqref{eq:darken-Dmix-intro}.  The multicomponent Darken closure is \eqref{eq:darken-MS-intro}, equivalently
\begin{equation}\label{eq:darken-friction}
 f_{ij}(x)=\frac1{S_D(x)d_i(x)d_j(x)}
 =\frac{D_{\rm mix}(x)}{d_i(x)d_j(x)}.
\end{equation}
This is the multicomponent Darken relation derived from the core-diagonal closure in \cite[Section~8]{BotheDruet2023}, written in the present molar normalization.

\begin{proposition}[Direct Darken flux]\label{prop:inverse}
Under \eqref{eq:darken-friction}, the Maxwell--Stefan law \eqref{eq:MS-law} is equivalent to
\begin{equation}\label{eq:flux-core}
 j_i=x_i\sum_{k=1}^N d_k(x)\nabla x_k-d_i(x)\nabla x_i,
\end{equation}
and therefore to
\begin{equation}\label{eq:full-system}
 \partial_t x=\Div(A(x)\nabla x),\qquad
 A(x)=(I-x\one^{\sf T})D(x),
 \qquad D(x)=\diag(d_1(x),\ldots,d_N(x)).
\end{equation}
Moreover $j_i\cdot\nu=0$ for all $i$ is equivalent to $\partial_\nu x_i=0$ for all $i$.
\end{proposition}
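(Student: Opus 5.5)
We substitute the Darken frictions into the Maxwell--Stefan law, check directly that the proposed flux \eqref{eq:flux-core} satisfies it, and then get equivalence from uniqueness of the constrained inversion. With $f_{ik}=S_D^{-1}(d_id_k)^{-1}$, the right-hand side of \eqref{eq:MS-law} becomes
\[
 \frac{1}{S_D d_i}\sum_{k\ne i}\Big(x_i\frac{j_k}{d_k}-x_k\frac{j_i}{d_k}\Big)
 =\frac{1}{S_D d_i}\Big(x_i\sum_{k}\frac{j_k}{d_k}-S_D\,j_i\Big).
\]
The $k=i$ terms cancel identically, which is why the sums may be extended to all $k$. Write $w:=\sum_k j_k/d_k$. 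The Maxwell--Stefan law is then equivalent to
\[
 S_D d_i\nabla x_i=x_i w-S_D j_i,
 \qquad\text{that is,}\qquad
 j_i=\frac{x_i w}{S_D}-d_i\nabla x_i .
\]
Summing over $i$ and imposing $\sum_i j_i=0$ together with $\sum_i x_i=1$ gives $w/S_D=\sum_k d_k\nabla x_k$. This is exactly \eqref{eq:flux-core}.

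For the converse, suppose $j$ is given by \eqref{eq:flux-core}. Then $\sum_i j_i=\sum_k d_k\nabla x_k-\sum_i d_i\nabla x_i=0$, using $\sum_i x_i=1$. Next,
\[
 w=\sum_i\frac{j_i}{d_i}=\Big(\sum_i\frac{x_i}{d_i}\Big)\sum_k d_k\nabla x_k-\sum_i\nabla x_i=S_D\sum_k d_k\nabla x_k,
\]
since $\sum_i\nabla x_i=0$. With this value of $w$, the displayed relation $S_Dd_i\nabla x_i=x_iw-S_Dj_i$ holds, so the Maxwell--Stefan law is satisfied. The forward computation shows that any solution of \eqref{eq:MS-law} with $\sum_i j_i=0$ must equal \eqref{eq:flux-core}, so the solution is unique and the two formulations are equivalent. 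The only place needing care is this bookkeeping: the constraint $\sum_i j_i=0$ is what determines the scalar multiplier $w$, and the equivalence holds only on the simplex, where $\sum_i\nabla x_i=0$.

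In vector form, \eqref{eq:flux-core} reads $j=-(I-x\one^{\sf T})D\nabla x$. Inserting this into $\partial_t x_i+\Div j_i=0$ gives \eqref{eq:full-system}.

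For the boundary condition, $j_i\cdot\nu=x_i\sum_k d_k\partial_\nu x_k-d_i\partial_\nu x_i$. If $\partial_\nu x=0$, every flux vanishes at the boundary. Conversely, suppose all $j_i\cdot\nu=0$ and put $s:=\sum_k d_k\partial_\nu x_k$. Then $d_i\partial_\nu x_i=x_is$ for each $i$, so $\partial_\nu x_i=x_is/d_i$. Summing over $i$ and using $\sum_i\partial_\nu x_i=0$ gives $sS_D=0$. Because $S_D>0$ (all $x_k\ge0$ with $\sum_k x_k=1$, and all $d_k>0$), we get $s=0$, hence $\partial_\nu x_i=0$ for every $i$.
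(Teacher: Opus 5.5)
Your proof is correct and follows the paper's argument essentially line by line: you use the same multiplier $\sum_k j_k/d_k$, fixed by summing and imposing $\sum_i j_i=0$, and the same boundary argument via $S_D\sum_k d_k\partial_\nu x_k=0$. You also check explicitly that \eqref{eq:flux-core} satisfies both the constraint and the Maxwell--Stefan relation, which makes precise the converse direction that the paper leaves implicit.
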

\begin{proof}
Put
\[
 S_D=\sum_k\frac{x_k}{d_k},
 \qquad q=\sum_k\frac{j_k}{d_k}.
\]
Using \eqref{eq:darken-friction}, the Maxwell--Stefan relation gives
\[
 \nabla x_i
 =\frac1{S_Dd_i}\left(x_iq-S_Dj_i\right).
\]
Hence
\[
 j_i=\frac{x_i}{S_D}q-d_i\nabla x_i.
\]
Summing over $i$ and using $\sum_i j_i=0$ yields
$q=S_D\sum_i d_i\nabla x_i$, which proves \eqref{eq:flux-core} and \eqref{eq:full-system}.

For the boundary condition, if $j_i\cdot\nu=0$ then
$d_i\partial_\nu x_i=x_iQ_\nu$, where
$Q_\nu=\sum_kd_k\partial_\nu x_k$.  Since $\sum_i\partial_\nu x_i=0$,
\[
 0=Q_\nu\sum_i\frac{x_i}{d_i}=S_DQ_\nu.
\]
Thus $Q_\nu=0$ and every $\partial_\nu x_i$ vanishes.  The converse follows directly from \eqref{eq:flux-core}.
\end{proof}

\begin{remark}[Core-diagonal closure and the multicomponent Darken equation]\label{rem:darken-identification}
Bothe and Druet derive in \cite[Section~8]{BotheDruet2023} that a diagonal core $\diag(d_1,\ldots,d_N)$ generates the Maxwell--Stefan diffusivities
$\MSD_{ij}=d_id_j\sum_kx_k/d_k$ and identify the $d_i$ with tracer diffusivities.  Thus \eqref{eq:full-system} is not a separate approximation of Maxwell--Stefan diffusion: it is the direct PDE representation of the multicomponent Darken/core-diagonal closure.
\end{remark}

\subsection{The affine Darken class and fixed-frame characterization}
\label{sec:affine-darken}
For the global argument we now impose \eqref{eq:affine-darken}.  Put
\[
 D_0=\diag(\delta_1,\ldots,\delta_N),
 \qquad P_x:=I-x\one^{\sf T},
 \qquad E=\{q\in\R^N:\one^{\sf T}q=0\}.
\]
Then on $E$,
\begin{equation}\label{eq:affine-compression}
 A(x)|_E=\sigma(x)I_E+\tau(x)A_0(x)|_E,
 \qquad A_0(x):=P_xD_0.
\end{equation}
\paragraph{Interpretation, positivity, and normalization.}
The affine dependence in \eqref{eq:affine-darken} is a constitutive restriction on the tracer-diffusivity vector, not an additional consequence of the Darken relation.  For fixed poles, the coefficients are recovered from the endpoint tracer diffusivities by
\[
 \tau(x)=\frac{d_N(x)-d_1(x)}{\delta_N-\delta_1},
 \qquad
 \sigma(x)=\frac{\delta_Nd_1(x)-\delta_1d_N(x)}{\delta_N-\delta_1}.
\]
Equivalently, every intermediate tracer diffusivity is a fixed convex combination of the endpoint values:
\[
 d_i(x)=\frac{\delta_N-\delta_i}{\delta_N-\delta_1}\,d_1(x)
       +\frac{\delta_i-\delta_1}{\delta_N-\delta_1}\,d_N(x).
\]
Thus \eqref{eq:darken-uniform-bounds} holds for all species if and only if it holds for $d_1$ and $d_N$.  In particular, the theorem allows $\tau$ to vanish or change sign.  At an interior composition with $\tau(x)=0$, all tracer diffusivities coincide, while the Stieltjes coordinates remain well defined by the fixed distinct poles.

The origin and scale of the pole parameters are not intrinsic.  For constants $a\in\R$ and $b>0$ with $a+b\delta_1>0$, the change
\[
 \widetilde\delta_i=a+b\delta_i,
 \qquad \widetilde\tau=\tau/b,
 \qquad \widetilde\sigma=\sigma-a\tau/b
\]
leaves every $d_i$ unchanged.  Consequently, the offset $\sigma$ and the scale $\tau$ do not, without a chosen normalization and further constitutive information, have separate microscopic interpretations.  With dimensions retained, $[\sigma]=[d_i]=\mathsf L^2\mathsf T^{-1}$ and $[\tau][\delta_i]=\mathsf L^2\mathsf T^{-1}$, where $\mathsf L$ and $\mathsf T$ denote length and time.  In the constant-factor representation $\MSD_{ij}=\delta_i\delta_j$, the factors $\delta_i$ have dimensions $\mathsf L\mathsf T^{-1/2}$, and $\tau=B_\delta^{-1}$ has the same dimensions.

The following proposition characterizes the tracer-diffusivity vectors for which the prescribed Stieltjes frame diagonalizes the mobility.

\begin{proposition}[Characterization of the fixed Stieltjes frame]\label{prop:fixed-frame-characterization}
Fix $x\in\cS^\circ$ and distinct positive numbers $\delta_1,\ldots,\delta_N$.  Let $z_j$ be a Stieltjes root associated with $D_0$ and let
\[
 (q_j)_i=-\frac{x_i}{\delta_i-z_j}.
\]
For a positive diagonal matrix $D=\diag(d_1,\ldots,d_N)$ set $A_D=P_xD$.  Then the following are equivalent:
\begin{enumerate}[label=(\roman*)]
 \item $q_j$ is an eigenvector of $A_D$ for one (and hence every) Stieltjes index $j$;
 \item there exist scalars $\sigma,\tau$ such that
       $d_i=\sigma+\tau\delta_i$ for every $i$.
\end{enumerate}
In this case all $q_j$ are eigenvectors and
\begin{equation}\label{eq:affine-modal-pointwise}
 A_Dq_j=(\sigma+\tau z_j)q_j.
\end{equation}
Consequently, among diagonal Darken cores, the state-dependent family \eqref{eq:affine-darken} is exactly the maximal one retaining, at every composition, the Stieltjes frame generated by the prescribed fixed species parameters $\delta_i$.  This is a fixed-pole frame, not a composition-independent Euclidean eigenbasis.
\end{proposition}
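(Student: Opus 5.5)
Since the Stieltjes roots have not been formally defined at this point, I will use the standard convention from \cite{BotheAdditive2026}: $z_j$ is a root, different from every $\delta_i$, of the secular equation
\[
 F(z):=\sum_{i=1}^N\frac{x_i}{\delta_i-z}=0 .
\]
Equivalently, $\one^{\sf T}q_j=-F(z_j)=0$, so $q_j\in E$. For $x\in\cS^\circ$ and distinct poles, $F$ is strictly increasing on each interval between consecutive poles. It tends to $-\infty$ from the left and to $+\infty$ from the right of each pole. Hence there is exactly one root $z_j\in(\delta_j,\delta_{j+1})$ for each $j=1,\ldots,N-1$. The vectors $q_j$ are nonzero because $x_i>0$.

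The core step is to compute $A_Dq_j$ directly. Its $i$-th component is
\[
 (A_Dq_j)_i=-\frac{d_ix_i}{\delta_i-z_j}+x_i\sum_k\frac{d_kx_k}{\delta_k-z_j}
 =x_i\Big(c_j-\frac{d_i}{\delta_i-z_j}\Big),
 \qquad c_j:=\sum_k\frac{d_kx_k}{\delta_k-z_j}.
\]

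\emph{Proof of (ii)$\Rightarrow$(i).} Substitute $d_i=\sigma+\tau\delta_i=(\sigma+\tau z_j)+\tau(\delta_i-z_j)$. Then
\[
 c_j=(\sigma+\tau z_j)F(z_j)+\tau\sum_kx_k=\tau,
\]
using $F(z_j)=0$ and $\sum_kx_k=1$. Therefore
\[
 (A_Dq_j)_i=x_i\Big(\tau-\frac{\sigma+\tau z_j}{\delta_i-z_j}-\tau\Big)=(\sigma+\tau z_j)(q_j)_i .
\]
This gives \eqref{eq:affine-modal-pointwise} for every $j$ simultaneously, and hence also the statement ``all $q_j$ are eigenvectors''.

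\emph{Proof of (i)$\Rightarrow$(ii), for a single $j$.} Suppose $A_Dq_j=\lambda q_j$. Dividing the $i$-th component by $x_i>0$ gives
\[
 c_j-\frac{d_i}{\delta_i-z_j}=-\frac{\lambda}{\delta_i-z_j},
 \quad\text{i.e.}\quad d_i=\lambda+c_j(\delta_i-z_j)\quad\text{for all } i.
\]
This is exactly of the form (ii), with $\tau=c_j$ and $\sigma=\lambda-c_jz_j$. By the first direction, every other $q_k$ is then also an eigenvector. This justifies the phrase ``for one (and hence every)''.

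\emph{Maximality statement.} Apply the equivalence pointwise at each composition $x$. Any state-dependent diagonal core whose mobility retains the fixed-pole frame must satisfy $d_i(x)=\sigma(x)+\tau(x)\delta_i$ with scalars depending on $x$. This is exactly \eqref{eq:affine-darken}. I would also remark that the representation with $\sigma,\tau$ is unique, since the $\delta_i$ are not all equal. Finally, because the $z_j$ and hence the $q_j$ vary with $x$, this is a fixed-pole frame rather than a fixed Euclidean eigenbasis.

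I expect no serious obstacle. The only points needing care are that the roots are well defined and distinct from the poles, so the division by $\delta_i-z_j$ is legitimate, and the observation that $c_j$ collapses to $\tau$ via $F(z_j)=0$ and $\one^{\sf T}x=1$.
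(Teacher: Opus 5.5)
Your proof is correct and is essentially the paper's: your converse identity $d_i=\lambda+c_j(\delta_i-z_j)$ is the paper's $d_i=\alpha+sz_j-s\delta_i$ with $s=\one^{\sf T}Dq_j=-c_j$, and your direct evaluation $c_j=\tau$ via $F(z_j)=0$ and $\one^{\sf T}x=1$ is the same computation the paper packages as $A_D|_E=\sigma I_E+\tau A_0|_E$ together with $A_0q_j=z_jq_j$. One small slip in your preliminaries: near each pole $F$ tends to $+\infty$ from the left and to $-\infty$ from the right, which is what lets the increasing function sweep from $-\infty$ to $+\infty$ across each gap; your conclusion of exactly one root per gap is nevertheless right.
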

\begin{proof}
If $d_i=\sigma+\tau\delta_i$, then on $E$ one has
$A_D=\sigma I_E+\tau A_0$, and \eqref{eq:affine-modal-pointwise} follows from $A_0q_j=z_jq_j$.

Conversely, suppose $A_Dq_j=\alpha q_j$ for one $j$ and set
$s:=\one^{\sf T}Dq_j$.  Componentwise,
\[
 (d_i-\alpha)(q_j)_i=x_is.
\]
Since $x_i>0$ and $(q_j)_i=-x_i/(\delta_i-z_j)$,
\[
 d_i=\alpha+s z_j-s\delta_i.
\]
Thus $d_i=\sigma+\tau\delta_i$ with
$\sigma=\alpha+s z_j$ and $\tau=-s$, independently of $i$.
\end{proof}

\begin{corollary}[The constant-friction subclass of Darken]\label{cor:darken-constant-corner}
Let $\widehat\delta_i>0$, not necessarily distinct, put
$B_{\widehat\delta}(x)=\sum_kx_k/\widehat\delta_k$, and set
$d_i=\widehat\delta_i/B_{\widehat\delta}$.  Then the pair diffusivities are
$\MSD_{ij}=\widehat\delta_i\widehat\delta_j$, and the frictions are constant.
Conversely, for $N\ge3$, every positive core-diagonal/Darken closure with composition-independent pair frictions has this form, with uniquely determined positive multiplicative factors $\widehat\delta_i$.
The global theorem applies directly when these factors are pairwise distinct, after relabeling, and also in the all-equal case.  Repeated factors that are not all equal are not covered by the strict-pole representation \eqref{eq:affine-darken}, but they satisfy the global conclusions by the grouping proof in \Cref{app:repeated-factors}.
\end{corollary}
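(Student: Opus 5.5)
The forward direction is a direct computation. With $d_i=\widehat\delta_i/B_{\widehat\delta}$ we get
\[
 S_D=\sum_k x_k\,\frac{B_{\widehat\delta}}{\widehat\delta_k}=B_{\widehat\delta}^2,
\]
and therefore
\[
 \MSD_{ij}=d_id_jS_D=\frac{\widehat\delta_i\widehat\delta_j}{B_{\widehat\delta}^2}\,B_{\widehat\delta}^2=\widehat\delta_i\widehat\delta_j .
\]
Hence $f_{ij}=(\widehat\delta_i\widehat\delta_j)^{-1}$ is constant. The applicability statements then follow as in the proof of \Cref{cor:constant-friction}. For distinct factors we relabel so that they increase and take $\sigma=0$, $\tau=B_{\widehat\delta}^{-1}$. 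For all-equal factors $\eta$ we take $\sigma=\eta^2$, $\tau=0$ with any distinct pole list. For the remaining cases we simply point to \Cref{app:repeated-factors}. That leaves the converse and the uniqueness of the factors to prove.

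For the converse, suppose $d_i(x)>0$ and the frictions $f_{ij}=1/(S_Dd_id_j)$ are independent of $x$ for $i\neq j$. Write $c_{ij}:=\MSD_{ij}=d_id_jS_D$, which is constant. The key step is to use $N\ge3$ to produce multiplicative factors. For distinct $i,j,k$ we have
\[
 \frac{c_{ij}c_{ik}}{c_{jk}}=\frac{d_i^2S_D^2d_jd_k}{d_jd_kS_D}=d_i^2S_D .
\]
Define
\[
 \widehat\delta_i:=\sqrt{c_{ij}c_{ik}/c_{jk}}>0 .
\]
This quantity equals $d_i\sqrt{S_D}$ pointwise, so it does not depend on the choice of $j,k$ and is constant in $x$. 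Then $d_i=\widehat\delta_i/\sqrt{S_D}$. Inserting this into $S_D=\sum_k x_k/d_k$ gives
\[
 S_D=\sqrt{S_D}\,B_{\widehat\delta},
\]
so $\sqrt{S_D}=B_{\widehat\delta}$, and $d_i=\widehat\delta_i/B_{\widehat\delta}$ as claimed. Consistency also holds: $c_{ij}=d_id_jS_D=\widehat\delta_i\widehat\delta_j$.

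Uniqueness comes from the same formula. If $c_{ij}=\widehat\delta_i\widehat\delta_j=\widetilde\delta_i\widetilde\delta_j$ for all $i\ne j$ with positive factors, then the triple identity $\widehat\delta_i^2=c_{ij}c_{ik}/c_{jk}$ determines each factor. This again requires a third index, i.e.\ $N\ge3$. For $N=2$ only the single product $c_{12}$ is available, which is why that case is excluded.

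The last claim is that repeated but not all-equal factors admit no strict-pole representation \eqref{eq:affine-darken}. Suppose $d_i=\sigma+\tau\delta_i$ with distinct $\delta_i$ and $\widehat\delta_1=\widehat\delta_2\ne\widehat\delta_3$. Then $d_1=d_2$ forces $\tau(x)(\delta_1-\delta_2)=0$, hence $\tau\equiv0$. But then all $d_i$ coincide, contradicting $d_1/d_3=\widehat\delta_1/\widehat\delta_3\ne1$.

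The main point to watch is the algebraic step producing $\widehat\delta_i$ independently of $j,k$. It is settled by the pointwise identity $c_{ij}c_{ik}/c_{jk}=d_i^2S_D$, so no serious obstacle is expected.
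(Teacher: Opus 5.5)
Your proof is correct and follows essentially the same route as the paper. Both arguments use the forward computation $S_D=B_{\widehat\delta}^2$, the triple-ratio identity for $N\ge3$, the normalization through $S_D=\sum_k x_k/d_k$, and the $\tau\equiv0$ contradiction for repeated but not all-equal factors. The only difference is that you phrase the triple ratio with $\MSD_{ij}$ instead of $f_{ij}$. This gives $\widehat\delta_i=d_i\sqrt{S_D}$ directly and makes independence of $j,k$ immediate, whereas the paper obtains a common scalar $s=B_{\widehat\delta}^{-1}$ by comparison; this is the reciprocal form of the same computation.
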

\begin{proof}
For the first assertion, $S_D=B_{\widehat\delta}^2$, so
$\MSD_{ij}=d_id_jS_D=\widehat\delta_i\widehat\delta_j$.
For the converse, \eqref{eq:darken-friction} shows that at every interior composition the friction array is multiplicatively separated.  If it is independent of the composition, then for distinct $i,j,k$ the triple-ratio identity
\[
 \beta_i^2=\frac{f_{ij}f_{ik}}{f_{jk}}
\]
defines unique positive constants $\beta_i$ with $f_{ij}=\beta_i\beta_j$.
Writing $\widehat\delta_i=\beta_i^{-1}$ and comparing \eqref{eq:darken-friction} gives
$d_i=s\widehat\delta_i$ with a common scalar $s(x)>0$.  The normalization
$S_D=\sum_kx_k/d_k$ and $f_{ij}=1/(\widehat\delta_i\widehat\delta_j)$ yields
$s=B_{\widehat\delta}^{-1}$.
If some factors coincide but not all do, then some tracer values coincide while others differ.  A representation with distinct poles would give
$d_i-d_j=\tau(\delta_i-\delta_j)$; equality of two tracer values would force $\tau=0$ and hence equality of all tracer values, a contradiction.  The all-equal case is instead represented with $\tau=0$, as in \Cref{cor:constant-friction}.
\end{proof}

\subsection{Reduced concentration system and entropy symmetrization}
\label{sec:concentration-geometry}
In this subsection and in \Cref{sec:local}, the affine assumption is not needed: the $d_i$ may be arbitrary positive Darken tracer diffusivities.
Put
\[
 u=(x_1,\ldots,x_m)^{\sf T},\qquad
 x_N=1-\one^{\sf T}u,
\]
where now $\one\in\R^m$.  Define
\[
 D'=\diag(d_1,\ldots,d_m),\qquad
 r=(d_1-d_N,\ldots,d_m-d_N)^{\sf T}.
\]
Since
\[
 \sum_{k=1}^Nd_k\nabla x_k
 =\sum_{k=1}^m(d_k-d_N)\nabla x_k=r^{\sf T}\nabla u,
\]
the reduced system is
\begin{equation}
 u_t-\Div(C(u)\nabla u)=0,
 \qquad
 C(u)=D'(u)-u r(u)^{\sf T}.
 \label{eq:reduced-system}
\end{equation}

\begin{lemma}[Determinant and inverse]
\label{lem:det-inverse}
For every $x\in\cS$ for which $d_i>0$,
\begin{equation}
 \det C(u)
 =\left(\prod_{i=1}^Nd_i\right)
 \left(\sum_{i=1}^N\frac{x_i}{d_i}\right)>0.
 \label{eq:detC}
\end{equation}
Moreover
\begin{equation}
 C^{-1}
 =(D')^{-1}
 +\frac{(D')^{-1}u\,r^{\sf T}(D')^{-1}}
 {d_N\sum_{i=1}^Nx_i/d_i}.
 \label{eq:Cinverse}
\end{equation}
\end{lemma}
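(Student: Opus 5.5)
Since $C(u)=D'-u\,r^{\sf T}$ is a rank-one perturbation of the diagonal matrix $D'=\diag(d_1,\ldots,d_m)$, whose entries are positive, both assertions of \Cref{lem:det-inverse} follow from the matrix determinant lemma and the Sherman--Morrison formula. The only remaining work is to rewrite the scalar factor $1-r^{\sf T}(D')^{-1}u$ in terms of the full composition.

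First I compute this factor. Using $r_k=d_k-d_N$ and $\sum_{k\le m}x_k=1-x_N$,
\[
 1-r^{\sf T}(D')^{-1}u
 =1-\sum_{k=1}^m\frac{(d_k-d_N)x_k}{d_k}
 =1-(1-x_N)+d_N\sum_{k=1}^m\frac{x_k}{d_k}
 =d_N\sum_{i=1}^N\frac{x_i}{d_i}.
\]
The last step merges $x_N$ with the sum, writing $x_N=d_N\cdot x_N/d_N$. Write $\kappa:=d_N S_D$ for this quantity. It is strictly positive on $\cS$: every $d_i>0$ and $x\in\cS$ has at least one positive component, so $S_D>0$.

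Next, the matrix determinant lemma gives $\det(D'-ur^{\sf T})=\det D'\,(1-r^{\sf T}(D')^{-1}u)$. With the identity above this equals $\bigl(\prod_{i\le m}d_i\bigr)d_N S_D$, which is \eqref{eq:detC}.

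Finally, the Sherman--Morrison formula $(D'-ur^{\sf T})^{-1}=(D')^{-1}+(D')^{-1}u\,r^{\sf T}(D')^{-1}/(1-r^{\sf T}(D')^{-1}u)$ holds because its denominator $\kappa$ is nonzero. Substituting $\kappa=d_N\sum_i x_i/d_i$ gives \eqref{eq:Cinverse}.

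Nothing here is hard. The only point needing care is the sign bookkeeping in $\kappa$, where the $d_N$-terms combine with $1-\one^{\sf T}u=x_N$. One could also verify \eqref{eq:Cinverse} directly by multiplying it against $C$; the resulting cross terms collapse by the same identity for $\kappa$.
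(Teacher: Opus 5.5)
Your proof is correct and follows the paper's argument exactly: you apply the matrix determinant lemma to $C=D'-u\,r^{\sf T}$, simplify $1-r^{\sf T}(D')^{-1}u=x_N+d_N\sum_{i\le m}x_i/d_i=d_N\sum_{i=1}^N x_i/d_i$, and invoke Sherman--Morrison with the same denominator. Your explicit observation that $S_D>0$ on the closed simplex, because some $x_i>0$, is a small point the paper leaves implicit.
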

\begin{proof}
The matrix determinant lemma gives
\[
 \det C=\det D'\bigl(1-r^{\sf T}(D')^{-1}u\bigr).
\]
Now
\[
 1-r^{\sf T}(D')^{-1}u
 =x_N+d_N\sum_{i=1}^m\frac{x_i}{d_i}
 =d_N\sum_{i=1}^N\frac{x_i}{d_i},
\]
which proves \eqref{eq:detC}.  Formula \eqref{eq:Cinverse} is the Sherman--Morrison formula with the same denominator.
\end{proof}

Define the reduced mixing entropy
\begin{equation}
 h(u)=\sum_{i=1}^m x_i\log x_i+x_N\log x_N.
 \label{eq:mixing-entropy}
\end{equation}
Its Hessian is
\begin{equation}
 H(u):=D_u^2h(u)
 =\diag(1/x_1,\ldots,1/x_m)+\frac1{x_N}\one\one^{\sf T}.
 \label{eq:H}
\end{equation}

\begin{proposition}[Exact core-diagonal entropy symmetrizer]
\label{prop:HC}
For every $u$ in the open reduced simplex,
\begin{equation}
 H(u)C(u)
 =\diag\left(\frac{d_1}{x_1},\ldots,\frac{d_m}{x_m}\right)
 +\frac{d_N}{x_N}\one\one^{\sf T}.
 \label{eq:HC}
\end{equation}
In particular
\[
 HC=C^{\sf T}H
\]
and for every $\xi\in\R^m\setminus\{0\}$,
\begin{equation}
 \xi^{\sf T}HC\xi
 =\sum_{i=1}^m\frac{d_i}{x_i}\xi_i^2
 +\frac{d_N}{x_N}\left(\sum_{i=1}^m\xi_i\right)^2>0.
 \label{eq:HCcoercive}
\end{equation}
\end{proposition}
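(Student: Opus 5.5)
The identity \eqref{eq:HC} is a direct matrix computation, and I will organise it through the structure $C=D'-ur^{\sf T}$. Write $X'=\diag(x_1,\ldots,x_m)$, so that $H=X'^{-1}+x_N^{-1}\one\one^{\sf T}$. The plan is to expand
\[
 HC=X'^{-1}D'-X'^{-1}u\,r^{\sf T}+\frac1{x_N}\one\one^{\sf T}D'-\frac1{x_N}\one(\one^{\sf T}u)r^{\sf T}.
\]
Each term then simplifies using elementary identities.
\begin{itemize}
\item First, $X'^{-1}D'=\diag(d_i/x_i)$.
\item Next, $X'^{-1}u=\one$.
\item Next, $\one^{\sf T}D'=(d_1,\ldots,d_m)$.
\item Finally, $\one^{\sf T}u=1-x_N$.
\end{itemize}
The second and fourth terms combine to $-\one r^{\sf T}-\frac{1-x_N}{x_N}\one r^{\sf T}=-\frac1{x_N}\one r^{\sf T}$. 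Adding the third term gives
\[
 \frac1{x_N}\one\bigl((d_1,\ldots,d_m)-r^{\sf T}\bigr)=\frac{d_N}{x_N}\one\one^{\sf T},
\]
since $d_k-r_k=d_N$ for every $k\le m$. This proves \eqref{eq:HC}. The only care needed is the bookkeeping in this cancellation, namely that $\one^{\sf T}u=1-x_N$ merges the two rank-one terms carrying $r^{\sf T}$ into a single multiple of $\one r^{\sf T}$. I do not expect any genuine obstacle.

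Symmetry follows at once. The right-hand side of \eqref{eq:HC} is a symmetric matrix, and $H$ is symmetric, so $HC=(HC)^{\sf T}=C^{\sf T}H$.

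For \eqref{eq:HCcoercive}, evaluate the quadratic form of the right-hand side of \eqref{eq:HC}. This gives $\sum_i (d_i/x_i)\xi_i^2+(d_N/x_N)(\one^{\sf T}\xi)^2$. On the open reduced simplex all $x_i>0$, including $x_N>0$, and all $d_i>0$. The first sum is therefore already strictly positive for $\xi\neq0$, while the second term is nonnegative.
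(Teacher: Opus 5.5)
Your proposal is correct and is essentially the paper's proof: both expand $HC$ from $H=\diag(1/x_i)+x_N^{-1}\one\one^{\sf T}$ and $C=D'-ur^{\sf T}$, and both use $\one^{\sf T}u=1-x_N$ and $d_k-r_k=d_N$ to collapse the rank-one terms into $\frac{d_N}{x_N}\one\one^{\sf T}$. Symmetry and positivity are then read off from the right-hand side, as you do; the only difference is that you merge the two $r^{\sf T}$-terms first, whereas the paper groups terms by the two summands of $H$.
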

\begin{proof}
Multiply
\[
 H=\diag(1/x_i)+x_N^{-1}\one\one^{\sf T},
 \qquad C=D'-ur^{\sf T}.
\]
The first product gives
\[
 \diag(d_i/x_i)-\one r^{\sf T}.
\]
The second gives
\[
 x_N^{-1}\one\bigl((d_1,\ldots,d_m)-(1-x_N)r\bigr)^{\sf T}.
\]
After adding $-\one r^{\sf T}$, the row vector multiplying $\one$ reduces to
\[
 x_N^{-1}\bigl((d_1,\ldots,d_m)-r\bigr)^{\sf T}
 =\frac{d_N}{x_N}\one^{\sf T}.
\]
This proves \eqref{eq:HC}; symmetry and positivity are immediate.
\end{proof}

\begin{proposition}[Fisher mobility factorization]
\label{prop:mobility-factorization}
On the open reduced simplex,
\begin{equation}
 H^{-1}=\diag(x_1,\ldots,x_m)-uu^{\sf T}.
 \label{eq:H-inverse}
\end{equation}
Consequently
\begin{equation}
 \mathcal M(u):=C(u)H(u)^{-1}
 \label{eq:phenomenological-mobility}
\end{equation}
is symmetric and positive definite, and the ideal Fick matrix factors as
\begin{equation}
 C=\mathcal M H.
 \label{eq:ideal-mobility-factorization}
\end{equation}
Thus the diagonal-core factorization occurs in the phenomenological mobility
$\mathcal M$, while the Fisher Hessian $H$ is the ideal thermodynamic factor.
\end{proposition}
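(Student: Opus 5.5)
The plan is to verify \eqref{eq:H-inverse} by direct multiplication, and then to read off symmetry and positivity of $\mathcal M$ from \Cref{prop:HC}. Write $X'=\diag(x_1,\ldots,x_m)$, so that $H=(X')^{-1}+x_N^{-1}\one\one^{\sf T}$. Apply Sherman--Morrison with $(X')^{-1}$ invertible and rank-one update $x_N^{-1}\one\one^{\sf T}$. The denominator is $1+x_N^{-1}\one^{\sf T}X'\one=1+x_N^{-1}(1-x_N)=x_N^{-1}$. This gives
\[
 H^{-1}=X'-\frac{x_N^{-1}X'\one\one^{\sf T}X'}{x_N^{-1}}=X'-uu^{\sf T},
\]
because $X'\one=u$. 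Alternatively, one multiplies $(X'-uu^{\sf T})H$ out and checks that it is the identity, using $\one^{\sf T}u=1-x_N$.

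For the mobility, I would avoid computing $C H^{-1}$ entrywise. Instead, write
\[
 \mathcal M=CH^{-1}=H^{-1}(HC)H^{-1}.
\]
By \Cref{prop:HC}, the middle factor $HC$ is symmetric. Since $H^{-1}$ is symmetric as well, $\mathcal M$ is a congruence transform of a symmetric matrix and is therefore symmetric. The matrix $H^{-1}$ is invertible, being the inverse of $H$, which is positive definite on the open simplex as the Hessian of a strictly convex function (visible from \eqref{eq:H}). Hence for $\xi\ne0$ we set $\eta=H^{-1}\xi\ne0$ and obtain $\xi^{\sf T}\mathcal M\xi=\eta^{\sf T}HC\eta>0$ by \eqref{eq:HCcoercive}. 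The factorization $C=\mathcal MH$ is then immediate from the definition of $\mathcal M$.

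There is no real obstacle here. The only point needing care is the Sherman--Morrison bookkeeping, in particular the identity $\one^{\sf T}u=1-x_N$, which makes the denominator collapse to $x_N^{-1}$.
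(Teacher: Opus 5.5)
Your proposal is correct and follows essentially the same route as the paper. You obtain $H^{-1}$ by Sherman--Morrison, derive symmetry from $HC=C^{\sf T}H$, and prove positivity by substituting $\eta=H^{-1}\xi$ into \eqref{eq:HCcoercive}; your congruence form $\mathcal M=H^{-1}(HC)H^{-1}$ just repackages the paper's step $\mathcal M^{\sf T}=H^{-1}C^{\sf T}=CH^{-1}$.
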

\begin{proof}
Formula \eqref{eq:H-inverse} follows directly from the Sherman--Morrison
formula, using $x_N=1-\one^{\sf T}u$.  The symmetry identity in
Proposition~\ref{prop:HC} gives
\[
 \mathcal M^{\sf T}=H^{-1}C^{\sf T}=CH^{-1}=\mathcal M.
\]
For $\xi\ne0$, put $\eta=H^{-1}\xi$.  Then $\xi=H\eta$ and
\[
 \xi^{\sf T}\mathcal M\xi
 =(H\eta)^{\sf T}C\eta
 =\eta^{\sf T}HC\eta>0
\]
by \eqref{eq:HCcoercive}; hence $\mathcal M$ is positive definite.  The
factorization \eqref{eq:ideal-mobility-factorization} is the definition of
$\mathcal M$ rewritten.
\end{proof}

\begin{corollary}[Entropy/Fisher identity]
\label{cor:entropy}
Every sufficiently regular solution taking values in $\cS^\circ$ satisfies
\begin{equation}
 \frac{\dd}{\dd t}\int_\Omega\sum_{i=1}^Nx_i\log x_i\,\dd x
 +\int_\Omega\sum_{i=1}^Nd_i(x)\frac{|\nabla x_i|^2}{x_i}\,\dd x=0.
 \label{eq:entropy-id}
\end{equation}
Equivalently,
\begin{equation}
 \frac{\dd}{\dd t}\int_\Omega h(x)\,\dd x
 +4\sum_i\int_\Omega d_i(x)|\nabla\sqrt{x_i}|^2\,\dd x=0.
 \label{eq:fisher-id}
\end{equation}
\end{corollary}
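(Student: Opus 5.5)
We compute $\frac{\dd}{\dd t}\int_\Omega h(u)\,\dd x$ directly from the reduced system \eqref{eq:reduced-system} and let \Cref{prop:HC} supply the quadratic form. For a sufficiently regular solution with values in $\cS^\circ$, $h$ is smooth along the solution, so differentiating under the integral gives
\[
 \frac{\dd}{\dd t}\int_\Omega h(u)\,\dd x=\int_\Omega \nabla_u h(u)\cdot u_t\,\dd x
 =\int_\Omega \nabla_u h(u)\cdot\Div\bigl(C(u)\nabla u\bigr)\,\dd x .
\]
We then integrate by parts. The boundary term is $\int_{\partial\Omega}\nabla_u h\cdot C(u)\partial_\nu u\,\dd S$, and it vanishes because $\partial_\nu u=0$; by \Cref{prop:inverse} this is equivalent to the no-flux condition. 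Since $\nabla(\nabla_u h(u))=H(u)\nabla u$, what remains is
\[
 -\int_\Omega \sum_{k=1}^d (\partial_k u)^{\sf T}H(u)C(u)\,\partial_k u\,\dd x .
\]
The chain rule used here is valid because $u(t)$ stays in a compact subset of the open simplex on the interval considered, so $\nabla_u h$ and $H$ are bounded there.

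Next we evaluate the quadratic form using \eqref{eq:HCcoercive} with $\xi=\partial_k u$. The relation $\sum_{i=1}^m\partial_k x_i=-\partial_k x_N$ gives
\[
 (\partial_k u)^{\sf T}HC\,\partial_k u=\sum_{i=1}^m\frac{d_i}{x_i}(\partial_k x_i)^2+\frac{d_N}{x_N}(\partial_k x_N)^2 .
\]
Summing over $k$ yields $\sum_{i=1}^N d_i|\nabla x_i|^2/x_i$, which is \eqref{eq:entropy-id}; note that $\int h(u)$ coincides with $\int\sum_i x_i\log x_i$. The Fisher form \eqref{eq:fisher-id} then follows pointwise from $|\nabla x_i|^2/x_i=4|\nabla\sqrt{x_i}|^2$ on $\cS^\circ$.

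The only step needing care is the regularity justification: the time derivative must commute with the integral, and the integration by parts must be legitimate. For maximal-$L^p$ strong solutions with $p>d+2$, the embedding $X_{\gamma,p}\hookrightarrow C^{1}$ together with $u\in W^{1,p}(X_0)\cap L^p(X_1)$ makes $t\mapsto\int h(u)$ absolutely continuous, with the stated derivative almost everywhere. If one prefers to avoid this, the identity can first be proved for classical solutions and then read in integrated form, which is how we interpret the phrase ``sufficiently regular''. Alternatively, one can work in the full variables: write $\frac{\dd}{\dd t}\int\sum_i x_i\log x_i=-\int\sum_i\nabla\log x_i\cdot j_i$, where the constant term drops out because $\sum_i j_i=0$, and insert \eqref{eq:flux-core}. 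The term $\sum_i x_i\nabla\log x_i=\sum_i\nabla x_i=0$ then cancels, leaving $\sum_i d_i|\nabla x_i|^2/x_i$. This route serves as a cross-check of the computation.
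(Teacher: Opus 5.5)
Your argument is correct and is the paper's own proof: test the reduced system with $Dh(u)$, identify the production $\nabla u^{\sf T}HC\nabla u$, and evaluate it with \eqref{eq:HCcoercive} and $\nabla x_N=-\sum_{i\le m}\nabla x_i$. Your optional full-variable cross-check has a sign slip: since $\partial_t x_i=-\Div j_i$, integration by parts gives $\frac{\dd}{\dd t}\int\sum_i x_i\log x_i=+\int\sum_i\nabla\log x_i\cdot j_i$, and only with this sign does inserting \eqref{eq:flux-core} yield $-\int\sum_i d_i|\nabla x_i|^2/x_i$.
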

\begin{proof}
Test \eqref{eq:reduced-system} with $Dh(u)$.  The production is $\nabla u^{\sf T}HC\nabla u$.  Formula \eqref{eq:HCcoercive}, with $\nabla x_N=-\sum_{i=1}^{m}\nabla x_i$, gives the displayed species sum.
\end{proof}

\begin{lemma}[Uniform physical spectrum]
\label{lem:spectrum-bounds}
For $x\in\cS^\circ$, every eigenvalue $\lambda$ of $C(u)$ is real and satisfies
\begin{equation}
 \min_i d_i(x)\le\lambda\le\max_i d_i(x).
 \label{eq:spectrum-bounds}
\end{equation}
The same inclusion extends continuously to the closed simplex.
\end{lemma}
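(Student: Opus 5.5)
Since $C(u)=H(u)^{-1}\bigl(H(u)C(u)\bigr)$ with $H$ symmetric positive definite and $HC$ symmetric positive definite by \Cref{prop:HC}, $C$ is similar to the symmetric matrix $H^{1/2}CH^{-1/2}=H^{-1/2}(HC)H^{-1/2}$. Hence every eigenvalue of $C$ is real, and it is positive by \eqref{eq:HCcoercive}. The eigenvalues are exactly the critical values of the generalized Rayleigh quotient
\[
 R(\xi)=\frac{\xi^{\sf T}HC\xi}{\xi^{\sf T}H\xi},\qquad \xi\in\R^m\setminus\{0\}.
\]
Indeed, $C\xi=\lambda\xi$ is equivalent to $HC\xi=\lambda H\xi$. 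It therefore suffices to bound $R$ above and below.

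To do this, I would rewrite both quadratic forms in full $N$-component notation. For $\xi\in\R^m$, set $\xi_N:=-\sum_{i=1}^m\xi_i$. By \eqref{eq:H}, $\xi^{\sf T}H\xi=\sum_{i=1}^N\xi_i^2/x_i$. By \eqref{eq:HCcoercive}, $\xi^{\sf T}HC\xi=\sum_{i=1}^N d_i\xi_i^2/x_i$. Both are sums of the same nonnegative weights $\xi_i^2/x_i$, and the second carries the extra factors $d_i$. This gives termwise
\[
 \min_i d_i\sum_i\frac{\xi_i^2}{x_i}\le\sum_i d_i\frac{\xi_i^2}{x_i}\le\max_i d_i\sum_i\frac{\xi_i^2}{x_i}.
\]
Therefore $\min_i d_i\le R\le\max_i d_i$, which yields \eqref{eq:spectrum-bounds} on $\cS^\circ$.

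For the closed simplex, I would argue by continuity. The entries of $C(u)=D'(u)-ur(u)^{\sf T}$ are continuous on a neighborhood of $\cS$, so its eigenvalues, as a multiset of roots of the characteristic polynomial, depend continuously on $u$. Approximate a boundary point by interior points. At the interior points all roots are real and lie in $[\min_i d_i(x),\max_i d_i(x)]$, and these endpoints are continuous in $x$. Since the set of real numbers is closed, the limiting roots are real and lie in the limiting interval. The only point needing care is to invoke continuity of roots rather than of eigenvectors; $H$ itself degenerates at the boundary, so the Rayleigh argument cannot be used there directly.

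I do not expect a real obstacle. The key step is recognizing that the exact symmetrization \eqref{eq:HC} makes both forms diagonal-weighted sums over the same $N$ species variables.
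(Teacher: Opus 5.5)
Your proof is correct and is the paper's argument in reduced coordinates. The paper shows that $A|_E=(I-x\one^{\sf T})D|_E$ is self-adjoint for the Fisher metric $X^{-1}$, with Rayleigh quotient $\sum_i d_iq_i^2/x_i\big/\sum_iq_i^2/x_i$, and extends the bound to the boundary by approximation from the interior. Your pair $(H,HC)$ is exactly the pullback of $(X^{-1},X^{-1}A)$ under $\xi\mapsto(\xi,-\one^{\sf T}\xi)$, so quoting \Cref{prop:HC} just replaces the paper's direct computation of $q^{\sf T}X^{-1}Aw$.
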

\begin{proof}
In full tangent variables $E=\{q\in\R^N:\one^{\sf T}q=0\}$, put $X=\diag(x_1,\ldots,x_N)$.  The operator is
\[
 A=(I-x\one^{\sf T})D.
\]
For $q,w\in E$,
\[
 q^{\sf T}X^{-1}Aw=\sum_i d_i\frac{q_iw_i}{x_i}.
\]
Thus $A|_E$ is self-adjoint in the positive metric $X^{-1}$, hence has real spectrum, and an eigenvector $q$ satisfies the Rayleigh formula
\[
 \lambda=\frac{\sum_i d_iq_i^2/x_i}{\sum_iq_i^2/x_i}.
\]
The reduced matrix $C$ is a coordinate representation of $A|_E$, proving \eqref{eq:spectrum-bounds}.  At boundary compositions, $C$ remains continuous and invertible by Lemma~\ref{lem:det-inverse}; the spectral inclusion follows by approximation from the interior.
\end{proof}

These are spectral bounds for the constrained operator.  They do not assert uniform Euclidean coercivity of $\sym C$ on the closed simplex: the Fisher metric becomes singular at a vanishing concentration.  On a compact interior range, by contrast, $H$ and $H^{-1}$ are bounded, and \eqref{eq:HC} supplies the bounded symmetrization used later.

\subsection{Local maximal regularity and continuation alternative}
\label{sec:local}
For the local theory, let $d_1,\ldots,d_N$ be arbitrary $C^3$ functions on a Euclidean neighborhood of $\cS$, satisfying \eqref{eq:darken-uniform-bounds}; no affine dependence on a pole list is assumed.  We first formulate the theory on an open neighborhood of the closed reduced simplex.  The positive-data continuation argument will later take place on compact subsets of $\cS^\circ$.  On every compact subset of $\cS^\circ$, $H$ and $H^{-1}$ are uniformly bounded and Proposition~\ref{prop:HC} shows directly that $H^{1/2}CH^{-1/2}$ is symmetric positive definite.  Lemma~\ref{lem:spectrum-bounds} and \eqref{eq:darken-uniform-bounds} give $\sigma(C)\subset[d_*,d^*]$ on the closed simplex.  Normal ellipticity is an open condition in finite dimensions.  Since the tracer diffusivities $d_i$ are $C^3$ on a Euclidean neighborhood of the simplex, the reduced coefficient map $C(u)$ has the same regularity there.  After shrinking that neighborhood there is an open set $\mathcal U$ in reduced concentration space containing the closed reduced simplex and constants $0<c<C<\infty$, $\vartheta_0<\pi/2$ such that
\[
 \sigma(C(u))\subset\{\zeta:\ c\le|\zeta|\le C,\ |\arg\zeta|\le\vartheta_0\}
 \qquad(u\in\mathcal U).
\]
Thus all matrices on $\mathcal U$ remain uniformly normally elliptic and uniformly invertible.  Their spectra need not remain real outside the simplex; only this uniform sectorial inclusion is used by the local theory.

Expanding the divergence gives
\begin{equation}
 u_t-C(u)\Delta u=F(u,\nabla u),
 \qquad
 F(u,\nabla u)=\sum_{\alpha=1}^dDC(u)[\partial_\alpha u]\,\partial_\alpha u.
 \label{eq:nondiv-u}
\end{equation}
Since $X_{\gamma,p}\hookrightarrow C^{1+\eta}$, on bounded trace-space sets with range in a fixed $\cK\Subset\mathcal U$ the maps
\[
 u\mapsto[-C(u)\Delta]\in\mathcal L(X_1,X_0),\qquad
 u\mapsto F(u,\nabla u)\in X_0
\]
are locally Lipschitz.  The term $F$ arises from the spatial product rule
and is estimated using the $C^1$ embedding of the trace space.

\begin{lemma}[Frozen Neumann complementing condition]
\label{lem:LS}
Let $\mathfrak C$ be a compact family of real matrices with
\[
 \sigma(C_*)\subset\{\zeta:\ c\le|\zeta|\le C,\ |\arg\zeta|\le\vartheta_0\},
 \qquad \vartheta_0<\pi/2.
\]
Then, after choosing a compatible parameter sector, the constant-coefficient Neumann problems
\[
 \lambda v-C_*\Delta v=f,\qquad \partial_\nu v=g,
 \qquad C_*\in\mathfrak C,
\]
are parameter elliptic and satisfy the full parameter-dependent Lopatinskii--Shapiro condition uniformly for $\lambda$ in a fixed sector $\Sigma_{\pi-\vartheta}$ with $\vartheta_0<\vartheta<\pi/2$.
\end{lemma}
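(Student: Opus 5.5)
We plan to reduce the statement to a uniform version of the classical scalar computation by decoupling the system along the eigenvectors of each frozen matrix. Since the leading symbol is $C_*|\xi|^2$ with $C_*$ constant, we work in a half-space after flattening and freezing at a boundary point, with tangential frequency $\xi'\in\R^{d-1}$ and normal variable $y>0$. Parameter ellipticity is immediate: the symbol $\lambda+C_*|\xi|^2$ is invertible whenever $\lambda\in\Sigma_{\pi-\vartheta}$ and $(\lambda,\xi)\ne0$. Indeed every eigenvalue $\mu$ of $C_*$ satisfies $|\arg(\mu|\xi|^2)|\le\vartheta_0<\vartheta$, so $\lambda+\mu|\xi|^2$ cannot vanish. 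For the quantitative bound $|\lambda+\mu|\xi|^2|\ge c_\vartheta(|\lambda|+|\xi|^2)$ we use $c\le|\mu|\le C$ and the angle gap $\vartheta-\vartheta_0>0$. The resolvent $(\lambda+C_*|\xi|^2)^{-1}$ is then bounded uniformly on the compact set $\mathfrak C$ and on the normalized set $|\lambda|+|\xi|^2=1$, by continuity and compactness.

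For the Lopatinskii--Shapiro condition we must show that the ODE problem
\[
 \lambda w-C_*(\partial_y^2-|\xi'|^2)w=0\ (y>0),\qquad \partial_yw(0)=0,\qquad w\to0\ (y\to\infty)
\]
has only the trivial solution for $(\lambda,\xi')\ne0$. Our route is to pass to Jordan/Schur form. We triangularize $C_*=QTQ^{-1}$ with $T$ upper triangular, whose diagonal contains the eigenvalues $\mu_k$. For $\tilde w=Q^{-1}w$, the bottom component solves the scalar equation $\tilde w_m''=(|\xi'|^2+\lambda/\mu_m)\tilde w_m$. Because $|\arg(\lambda/\mu_m)|<\pi-\vartheta+\vartheta_0<\pi$ and $(\lambda,\xi')\ne0$, the quantity $\rho_m^2:=|\xi'|^2+\lambda/\mu_m$ is nonzero and not on $(-\infty,0]$. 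Hence the decaying solution is $e^{-\rho_my}$ with $\operatorname{Re}\rho_m>0$. The Neumann condition $\rho_m\tilde w_m(0)=0$ then forces $\tilde w_m\equiv0$. Back-substitution up the triangle gives inhomogeneous scalar equations whose forcing terms already vanish, so by induction all components vanish.

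Uniformity is the main obstacle. We need a lower bound on the Lopatinskii determinant, or equivalently a uniform bound on the solution operator from boundary data, over the compact set of $(C_*,\lambda,\xi')$ normalized by $|\lambda|+|\xi'|^2=1$ with $\lambda$ in the closed sector. Pointwise nondegeneracy does not by itself suffice, because Jordan blocks and eigenvalue collisions make the Schur basis discontinuous. We would therefore avoid the basis entirely. The stable subspace is the spectral subspace of the first-order system matrix
\[
 \begin{pmatrix}0&I\\ |\xi'|^2+C_*^{-1}\lambda&0\end{pmatrix}
 \]
belonging to eigenvalues with negative real part. Its eigenvalues are $\pm\rho$ with $\rho^2\in\sigma(|\xi'|^2+C_*^{-1}\lambda)$, and these stay uniformly away from the imaginary axis by the angle gap. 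The corresponding Riesz projection, given by a contour integral, therefore depends continuously on $(C_*,\lambda,\xi')$. The boundary map restricted to the stable subspace is injective, which is the pointwise step above. Since it depends continuously on the parameters, compactness of the normalized parameter set yields a uniform bound on its inverse. This is exactly the uniform parameter-dependent Lopatinskii--Shapiro condition. The compatible sector is $\Sigma_{\pi-\vartheta}$ with any fixed $\vartheta\in(\vartheta_0,\pi/2)$.
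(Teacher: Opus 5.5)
Your proof is correct. Its core matches the paper's: the eigenvalues $|\xi'|^2+\lambda/\mu$, $\mu\in\sigma(C_*)$, of $M=|\xi'|^2I+\lambda C_*^{-1}$ stay uniformly away from $(-\infty,0]$ on the compact normalized parameter set, which must contain $\lambda=0$, $|\xi'|=1$. Compactness of that set and of $\mathfrak C$ then gives uniformity.

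The difference lies in how the decaying solutions are described. The paper takes the principal square root $R=M^{1/2}$ by the holomorphic functional calculus on the slit plane. Every decaying solution is then $e^{-yR}c$, the Neumann map is $c\mapsto Rc$, and $c=R^{-1}g$ gives existence, uniqueness, an explicit formula and continuous parameter dependence at once, without choosing a basis. You instead prove pointwise injectivity by Schur triangularization and back-substitution, and then recover continuity from the Riesz projection $P_-$ of the $2m\times 2m$ companion matrix $\mathcal M$. This is valid, but it is the same computation done with two tools. The stable subspace $E_-$ of $\mathcal M$ is exactly the graph $\{(c,-Rc):c\in\mathbb C^m\}$, so $P_-$ computes $R$ implicitly, and invertibility of $R$ replaces your Schur step.

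Two points you leave implicit should be stated.
\begin{itemize}
\item The Lopatinskii--Shapiro condition also requires solvability for every datum $g$, not only uniqueness. This follows from $\dim E_-=m$, because $\det(\mathcal M-sI)=\det(s^2I-M)=\prod_k(s-\rho_k)(s+\rho_k)$ with $\Re\rho_k>0$.
\item Because $E_-$ moves with the parameters, the compactness argument should be run on a map between fixed spaces. For example, $W\mapsto\bigl(\pi_2P_-W,\,(I-P_-)W\bigr)$ on $\mathbb C^{2m}$, with $\pi_2$ the derivative component, is injective and hence bounded below uniformly.
\end{itemize}

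Your Riesz-projection-plus-compactness step is the generic recipe, and it would still work for principal parts whose normal ODE has no convenient square-root form. The paper's square root exploits the special structure $C_*\Delta$ and is shorter.
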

\begin{proof}
For the half-space normal ODE at tangential frequency $\xi'$, set
\[
 M(\xi',\lambda)=|\xi'|^2I+\lambda C_*^{-1}.
\]
Choose $\vartheta_0<\vartheta<\pi/2$.  On the compact normalized parameter set
\[
 \xi'\in\R^{d-1},\qquad
 \lambda\in\overline{\Sigma_{\pi-\vartheta}}\cup\{0\},\qquad
 |\xi'|^2+|\lambda|=1,
\]
every eigenvalue $|\xi'|^2+\lambda/\mu$, with $\mu\in\sigma(C_*)$,
stays a positive distance from $(-\infty,0]$, uniformly in
$C_*\in\mathfrak C$.  Indeed the angular bound
$\pi-\vartheta+\vartheta_0<\pi$ excludes cancellation with a nonnegative
real number.  The normalization excludes the zero parameter pair and
includes $\lambda=0$ whenever $\xi'\ne0$.  Hence the principal matrix square root
\[
 R=M^{1/2}
\]
is well defined, holomorphic in the parameters in a uniform complex neighborhood, invertible, and satisfies $\sigma(R)\subset\{\Re z>0\}$.  Every decaying solution is
\[
 v(x_n)=e^{-x_nR}c,
\]
and, since the outward normal of the upper half-space is $-e_n$, the
Neumann datum satisfies
\[
 \partial_\nu v(0)=Rc=g,\qquad c=R^{-1}g.
\]
Uniform estimates follow from compactness of the normalized parameter set
and of $\mathfrak C$.  This matrix-square-root argument verifies the
complementing condition used in \cite{Augner2024}.
\end{proof}

\begin{lemma}[Uniform frozen maximal regularity]
\label{lem:uniform-MR}
Let $\cK\Subset\mathcal U$ and $M<\infty$.  For every
$v\in X_{\gamma,p}$ with
\[
 v(\overline\Omega)\subset\cK,
 \qquad \|v\|_{X_{\gamma,p}}\le M,
\]
the realization
\[
 \mathcal A(v)w:=-C(v(\cdot))\Delta w,
 \qquad D(\mathcal A(v))=X_1,
\]
has maximal $L^p$ regularity on $X_0$.  On every fixed finite horizon the
zero-initial-trace maximal-regularity constant can be chosen uniformly over this family, and the
same zero-trace constant works on all shorter intervals.
\end{lemma}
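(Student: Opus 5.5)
We follow the classical route for non-divergence systems with continuous top-order coefficients: freeze the coefficients, localize, and perturb, keeping track of which constants depend on the family parameters.

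\emph{Step 1: frozen constant-coefficient problems.} We fix $\cK$ and $M$. By the choice of $\mathcal U$, the set $\mathfrak C=\{C(u):u\in\cK\}$ is compact and satisfies the sectorial spectral inclusion of \Cref{lem:LS}. That lemma gives parameter ellipticity and the uniform Lopatinskii--Shapiro condition for $\lambda-C_*\Delta$ with Neumann data, uniformly in $C_*\in\mathfrak C$. The theory of parameter-elliptic boundary value problems (Denk--Hieber--Pr\"uss, as used in \cite{Augner2024}) then yields, for the whole-space and bent half-space model problems, a bounded $H^\infty$-calculus or $\mathcal R$-sectoriality of angle $<\pi/2$. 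The constants depend only on $c,C,\vartheta$, on the $C^{2+\alpha}$ character of $\partial\Omega$, on $d$ and on $p$. In particular they are uniform over $\mathfrak C$.

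\emph{Step 2: variable coefficients.} Since $X_{\gamma,p}\hookrightarrow C^{1+\eta}(\overline\Omega)$, every $v$ in the family satisfies $\|v\|_{C^{\eta}}\le c_0M$. Composing with the $C^3$ map $C$ on $\cK$, the coefficient $C(v(\cdot))$ has a modulus of continuity that is uniform over the family.
\begin{itemize}[label=--]
\item Choose a radius $r=r(M,\cK)$ so that on every ball of radius $r$ the oscillation of $C(v)$ lies below the smallness threshold dictated by the Step 1 constants.
\item Take a finite partition of unity subordinate to such balls. Its cardinality and derivative bounds depend only on $r$ and $\Omega$.
\item Apply the standard localization and perturbation argument. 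It gives maximal regularity of $\mathcal A(v)+\mu$ for some shift $\mu=\mu(M,\cK)$, with a uniform constant.
\end{itemize}
Lower-order commutator terms are absorbed by interpolation and by the shift. This step is the main obstacle: one has to check that every constant in the localization depends on $v$ only through $\|v\|_{C^\eta}$ and the range $\cK$. We handle it by citing the localization theorem in a form whose constants are explicit in the modulus of continuity.

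\emph{Step 3: removing the shift on finite horizons.} On $[0,T]$ with zero initial trace, substitute $w=e^{\mu t}\tilde w$. This converts the problem for $\mathcal A(v)$ into the one for $\mathcal A(v)+\mu$, at the cost of a factor $e^{\mu T}$, which is uniform for fixed $T$. Hence maximal $L^p$ regularity of $\mathcal A(v)$ itself follows on $X_0$, with a zero-trace constant uniform over the family.

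\emph{Step 4: shorter intervals.} For $T'<T$ and $f\in L^p(0,T';X_0)$, extend $f$ by zero to $(0,T)$ and restrict the solution back to $(0,T')$. Restriction does not increase norms, so the same constant works on $[0,T']$. This use of zero-extension is exactly why the statement is formulated for zero initial trace: with nonzero trace, the constants would degenerate as the interval shrinks.
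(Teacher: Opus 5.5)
Your proposal is correct, but it obtains uniformity by a different mechanism than the paper. The paper never tracks constants through the localization. It applies Augner's bounded-domain theorem \cite[Theorem~5.2]{Augner2024} only qualitatively, to each coefficient field separately, and removes that field's own shift by exponential conjugation; as it notes, no uniform shift is needed. Uniformity then comes from a soft argument:
\begin{itemize}
\item The family $\{v\}$ is bounded in $C^{1+\eta}$, hence relatively compact in $C^{1+\eta'}$ for $\eta'<\eta$.
\item So the fields $C(v)$ have a compact closure. Its members still take values in the same sectorial matrix family and satisfy the symbol bounds of Lemma~\ref{lem:LS}.
\item One has $\|\mathcal A(v)-\mathcal A(\widetilde v)\|_{\mathcal L(X_1,X_0)}\le C\|C(v)-C(\widetilde v)\|_{L^\infty}$, and each $\partial_t+\mathcal A(v)$ is an isomorphism from the zero-trace space onto $L^p(0,T_0;X_0)$.
\item Openness of invertibility and a finite Neumann-series covering of the compact closure then give a single inverse bound.
\end{itemize}

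Your route instead needs a version of the Denk--Hieber--Pr\"uss localization theorem in which the constant and the shift depend explicitly only on the ellipticity data, the modulus of continuity of the top-order coefficients, and the boundary charts. That is true and standard. However, you assert it rather than cite a precise statement, and it is exactly the step you flag as the main obstacle. The paper's compactness argument is designed to sidestep it. In return, your route is quantitative: it gives an explicit uniform shift $\mu(M,\cK)$ and constants controlled by the modulus of continuity, without a compactness step. Your Steps~3 and~4 coincide with the paper's.

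One side remark in Step~4 is inaccurate. The nonzero-trace maximal-regularity estimate does not degenerate on shorter intervals either: extend the forcing by zero, keep the same initial datum, and restrict. The paper does exactly this in \eqref{eq:uniform-nonzero-MR-core}. What degenerates as the interval shrinks is the trace embedding $\|w\|_{C([0,h];X_{\gamma,p})}\lesssim\|w\|_{\mathbb E_h}$ without the initial-trace term; time-constant functions show this, as discussed in the proof of Lemma~\ref{lem:terminal-trace}. This does not affect the validity of your argument.
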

\begin{proof}
We match the bounded-domain theorem of Augner \cite[Assumption~5.1 and
Theorem~5.2]{Augner2024}.  The component space is the finite-dimensional Banach space $E=\mathbb C^m$, hence belongs to Augner's class $\mathcal{HT}$.  In Augner's notation the operator order is $2m_A=2$, hence $m_A=1$.  There is one boundary operator $B_1=\partial_\nu I_E$ and the only nonzero boundary projection is $P_{1,1}=I_E$.  Since
$X_{\gamma,p}\hookrightarrow C^{1+\eta}$, the top-order coefficients
$C(v(x))\delta_{\alpha\beta}$ are uniformly continuous (indeed
$C^{1+\eta}$), while the frozen realization has no lower-order terms.  The
single boundary operator is the fixed first-order Neumann operator
$\partial_\nu I_E$; its coefficients are $C^1$ on a $C^2$ boundary.
Lemma~\ref{lem:LS} verifies parameter ellipticity and the full
parameter-dependent Lopatinskii--Shapiro condition with one angle strictly
below $\pi/2$.  Augner's theorem yields maximal regularity first for a shifted
realization $\mu+\mathcal A(v)$.  On every finite interval this is equivalent
to maximal regularity of the unshifted realization, and we record the transfer
because no uniform choice of the shift is needed below.  If
\[
 u_t+\mathcal A(v)u=f,\qquad u(0)=0,
\]
and $q(t)=e^{-\mu t}u(t)$, then
\[
 q_t+(\mu+\mathcal A(v))q=e^{-\mu t}f,\qquad q(0)=0.
\]
Conversely $u=e^{\mu t}q$.  Multiplication by $e^{\pm\mu t}$ is an
isomorphism of both $L^p(0,T;X_0)$ and
$W^{1,p}(0,T;X_0)\cap L^p(0,T;X_1)$ on each fixed finite horizon.  Hence every
frozen unshifted realization has maximal $L^p$ regularity.

For uniformity, choose $0<\eta'<\eta$.  The displayed family of $v$ is
relatively compact in $C^{1+\eta'}(\overline\Omega)$, hence the coefficient
fields $C(v)$ form a relatively compact family in the same topology.  Every
coefficient field in its closure still has the same sectorial and boundary
symbol bounds.  Moreover
\[
 \|\mathcal A(v)-\mathcal A(\widetilde v)\|_{\mathcal L(X_1,X_0)}
 \le C\|C(v)-C(\widetilde v)\|_{L^\infty}.
\]
For a fixed horizon $T_0$, maximal regularity means that
$\partial_t+\mathcal A(v)$ is an isomorphism from the zero-trace
maximal-regularity space onto $L^p(0,T_0;X_0)$.  Invertibility is stable in
operator norm; a finite Neumann-series covering of the compact coefficient
closure therefore gives one inverse bound for the whole family.  Extending a
forcing by zero from $(0,T)$ to $(0,T_0)$ and restricting the corresponding
solution shows that the same zero-trace constant works for all $0<T\le T_0$.
\end{proof}

\begin{theorem}[Local strong solution and restart alternative]
\label{thm:local}
Let $N$, $\Omega$, $p$, and the spaces $X_0,X_1,X_{\gamma,p}$ be as in \Cref{sec:introduction}.  Let the $d_i$ be arbitrary $C^3$ tracer diffusivities on a neighborhood of $\cS$ satisfying \eqref{eq:darken-uniform-bounds}, and let $\mathcal U$ be the normally elliptic neighborhood constructed above.  Every datum $u^0\in X_{\gamma,p}$ with range in $\mathcal U$ generates a unique maximal strong solution of \eqref{eq:reduced-system} with range in $\mathcal U$ on $[0,T_{\max})$, in the class
\[
 W^{1,p}_{\rm loc}([0,T_{\max});X_0)
 \cap L^p_{\rm loc}([0,T_{\max});X_1)
 \cap C([0,T_{\max});X_{\gamma,p}).
\]
In particular this applies to the strictly positive compatible simplex data \eqref{eq:initial-data}, without the affine assumption \eqref{eq:affine-darken}.  The solution depends locally Lipschitz continuously on its initial datum in the maximal-regularity and trace norms on every compact subinterval of its existence interval.
For every compact $\cK\Subset\mathcal U$ and $M<\infty$ there is $\tau_*(\cK,M)>0$ such that every compatible datum with range in $\cK$ and $X_{\gamma,p}$ norm at most $M$ generates a solution of length at least $\tau_*$.  Here the domain, exponent, and coefficient law are fixed.  Consequently, if $T_{\max}<\infty$ and the solution range remains in one compact subset of $\mathcal U$, then
\[
 \|u(t)\|_{X_{\gamma,p}}\longrightarrow\infty
 \qquad(t\uparrow T_{\max}).
\]
\end{theorem}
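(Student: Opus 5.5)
The plan is to treat \eqref{eq:nondiv-u} as an abstract quasilinear evolution equation $u'+\mathcal A(u)u=F(u)$ in $X_0$ and apply the Clément--Li / Prüss local theory for quasilinear problems with maximal $L^p$ regularity. Here $\mathcal A(v)=-C(v)\Delta$ with domain $X_1$, and $F(v)=F(v,\nabla v)$. The abstract hypotheses are:
\begin{enumerate}[label=(\alph*)]
\item $(\mathcal A,F)\colon V\to\mathcal L(X_1,X_0)\times X_0$ is locally Lipschitz on an open set $V\subset X_{\gamma,p}$;
\item $\mathcal A(v)$ has maximal $L^p$ regularity for each $v\in V$.
\end{enumerate}
I take $V=\{v\in X_{\gamma,p}: v(\overline\Omega)\subset\mathcal U\}$. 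This set is open because $X_{\gamma,p}\hookrightarrow C^{1+\eta}(\overline\Omega)\hookrightarrow C(\overline\Omega)$ and $v(\overline\Omega)$ is compact, hence at positive distance from $\partial\mathcal U$. The Lipschitz property (a) holds on every ball of $V$ with range in a fixed compact $\cK\Subset\mathcal U$, by the remark preceding Lemma~\ref{lem:LS}. Maximal regularity (b) is Lemma~\ref{lem:uniform-MR}.

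For existence, I would argue by contraction in the usual way. Fix $u^0\in V$ and let $u^*$ be the solution of the linear problem $w'+\mathcal A(u^0)w=F(u^0)$, $w(0)=u^0$. Consider the ball $\mathbb B_{r,T}$ around $u^*$ in $\mathbb E_T=W^{1,p}(0,T;X_0)\cap L^p(0,T;X_1)$, intersected with zero trace differences. Using $\mathbb E_T\hookrightarrow C([0,T];X_{\gamma,p})$ with a constant uniform in $T\le T_0$ for zero-trace functions, all elements of $\mathbb B_{r,T}$ have range in a compact neighbourhood $\cK$ of $u^0(\overline\Omega)$ once $r,T$ are small. The map $w\mapsto$ solution of $v'+\mathcal A(u^0)v=F(w)+(\mathcal A(u^0)-\mathcal A(w))w$ is then a strict contraction, using the uniform zero-trace constant from Lemma~\ref{lem:uniform-MR}. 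Local Lipschitz dependence on the data follows from the same fixed-point estimates.

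Uniqueness and the maximal solution follow by the standard gluing argument. Regularity of the continued solution in $C([0,T_{\max});X_{\gamma,p})$ is the trace embedding.

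For strictly positive simplex data \eqref{eq:initial-data}, the reduced datum $u^0$ lies in $X_{\gamma,p}$ and its range is in the reduced simplex, which is contained in $\mathcal U$; no affine hypothesis enters.

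The main point is the uniform existence time $\tau_*(\cK,M)$. For this I would redo the contraction with every constant chosen uniformly over the set $\{u^0:\ u^0(\overline\Omega)\subset\cK,\ \|u^0\|_{X_{\gamma,p}}\le M\}$:
\begin{enumerate}[label=(\roman*)]
\item the maximal-regularity constant of $\mathcal A(u^0)$, which is uniform by Lemma~\ref{lem:uniform-MR};
\item the Lipschitz constants on a slightly larger compact $\cK'$ and radius $M+1$;
\item the smallness of $\|u^*-u^0\|_{C([0,T];X_{\gamma,p})}$ as $T\to0$.
\end{enumerate}
Item (iii) is the delicate one, since it is not uniform for a merely bounded set of data. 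I would first try relative compactness: the data set is bounded in $X_{\gamma,p}$ but not compact there. So instead I would control the range condition only in $C(\overline\Omega)$, via the compact embedding into $C^{1+\eta'}$ and equicontinuity of $t\mapsto e^{-t\mathcal A}u^0$ in $C^{1+\eta'}$. The contraction estimates themselves only need $\|u^*\|_{\mathbb E_T}$ bounded together with smallness factors $T^{\epsilon}$. Those smallness factors come from the $C^{1+\eta}$ control of lower-order terms, interpolated between $X_{\gamma,p}$ and $X_1$.

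Given $\tau_*$, the blow-up alternative is immediate. Suppose $T_{\max}<\infty$, the range stays in $\cK$, and $\liminf_{t\uparrow T_{\max}}\|u(t)\|_{X_{\gamma,p}}<\infty$, i.e.\ there are $t_n\uparrow T_{\max}$ with $\|u(t_n)\|_{X_{\gamma,p}}\le M$. Restarting at some $t_n>T_{\max}-\tau_*(\cK,M)$ extends the solution beyond $T_{\max}$, a contradiction. Hence the norm tends to infinity.
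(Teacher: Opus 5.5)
Your proposal is correct and follows essentially the paper's route: a contraction for a zero-trace perturbation of a reference solution frozen at the datum, with every constant made uniform over $\{u^0:\ u^0(\overline\Omega)\subset\cK,\ \|u^0\|_{X_{\gamma,p}}\le M\}$, and with the smallness taken from low-order ($C^0$/$C^1$) control rather than from $X_{\gamma,p}$-continuity of the reference orbit. The paper obtains both your item (iii) and the $T^{\epsilon}$ factors for the zero-trace perturbation from a single $T$-uniform embedding, $\mathbb E_T\hookrightarrow C^{\theta_0}([0,T];(X_0,X_1)_{\xi,1})\hookrightarrow C^{\theta_0}([0,T];C^1(\overline\Omega))$ with $(1+d/p)/2<\xi<1-1/p$, including an initial-trace term; so the interpolation is between $X_0$ and $X_1$ strictly below the trace level, not between $X_{\gamma,p}$ and $X_1$, and your compactness/equicontinuity detour is unnecessary.
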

\begin{proof}
We give the quantitative freezing argument because the uniform restart time is
used at the terminal continuation step.  For $J=(0,T)$ set
\[
 \mathbb E_T:=W^{1,p}(J;X_0)\cap L^p(J;X_1),\qquad
 \mathbb E_T^0:=\{h\in\mathbb E_T:h(0)=0\}.
\]
Fix $\cK\Subset\mathcal U$ and $M<\infty$, and choose an open
$\mathcal O$ with $\cK\Subset\mathcal O\Subset\mathcal U$.  Put
$d_0=\operatorname{dist}(\cK,\partial\mathcal O)>0$.  We prove one common existence time
for all compatible data $v_0$ satisfying
\[
 v_0(\overline\Omega)\subset\cK,\qquad
 \|v_0\|_{X_{\gamma,p}}\le M.
\]

Fix $T_0>0$.  Let $u_*^{v_0}$ solve the frozen homogeneous problem
\begin{equation}\label{eq:frozen-reference-local}
 \partial_tu_*^{v_0}+\mathcal A(v_0)u_*^{v_0}=0,
 \qquad u_*^{v_0}(0)=v_0,
\end{equation}
on $(0,T_0)$.  The uniform zero-trace estimate of
Lemma~\ref{lem:uniform-MR}, combined with a fixed bounded coretraction of the
trace $\mathbb E_{T_0}\to X_{\gamma,p}$, gives
\begin{equation}\label{eq:frozen-reference-bound}
 \sup_{v_0}\|u_*^{v_0}\|_{\mathbb E_{T_0}}\le M_{\rm ref}<\infty,
\end{equation}
where the supremum is over the displayed family of initial data.  Indeed, one
subtracts a coretraction of $v_0$ and applies the uniform zero-trace estimate;
the family $\mathcal A(v_0)$ is uniformly bounded in
$\mathcal L(X_1,X_0)$.

The condition $p>d+2$ yields the uniform short-time $C^1$ modulus used below.  Choose
\[
 \frac{1+d/p}{2}<\xi<1-\frac1p,
 \qquad 0<\theta_0<1-\frac1p-\xi.
\]
Then, on a fixed interval,
\[
 \mathbb E_T\hookrightarrow
 C^{\theta_0}\bigl([0,T];(X_0,X_1)_{\xi,1}\bigr)
 \hookrightarrow C^{\theta_0}([0,T];C^1(\overline\Omega)),
\]
because
$(X_0,X_1)_{\xi,1}\hookrightarrow B^{2\xi}_{p,1}(\Omega)
\hookrightarrow C^1(\overline\Omega)$ and $2\xi>1+d/p$.
The standard short-interval extension construction (compare
\cite[Lemma~7.2]{Amann2005}) controls the extension norm by the sum of the
maximal-regularity norm and the initial trace norm, uniformly for
$0<T\le T_0$.  Applying the fixed-horizon embedding to that extension therefore gives
\begin{equation}\label{eq:uniform-short-time-C1}
 \|w(t)-w(s)\|_{C^1}
 \le C|t-s|^{\theta_0}
 \bigl(\|w\|_{\mathbb E_T}+\|w(0)\|_{X_{\gamma,p}}\bigr),
\end{equation}
with $C$ independent of $T\le T_0$; for $w\in\mathbb E_T^0$ the initial-trace
term is absent.  In particular, by \eqref{eq:frozen-reference-bound},
\begin{equation}\label{eq:frozen-reference-close}
 \sup_{v_0}\sup_{0\le t\le T}
 \|u_*^{v_0}(t)-v_0\|_{C^1}
 \le C M_{\rm ref}T^{\theta_0}.
\end{equation}

Write $u=u_*^{v_0}+h$ with $h\in\mathbb E_T^0$.  The equation for $h$ is
\[
 \partial_th+\mathcal A(v_0)h
 =\bigl[\mathcal A(v_0)-\mathcal A(u_*^{v_0}+h)\bigr]
   (u_*^{v_0}+h)
   +F(u_*^{v_0}+h,\nabla(u_*^{v_0}+h)).
\]
Let $\mathcal S_{v_0,T}$ denote the zero-trace solution operator for
$\partial_t+\mathcal A(v_0)$ on $(0,T)$.  Thus the preceding equation defines
\[
 \mathfrak T_{v_0,T}(h)
 :=\mathcal S_{v_0,T}\!\left(
 [\mathcal A(v_0)-\mathcal A(u_*^{v_0}+h)](u_*^{v_0}+h)
 +F(u_*^{v_0}+h,\nabla(u_*^{v_0}+h))\right).
\]
Choose a fixed radius $r>0$.  For
$\|h\|_{\mathbb E_T}\le r$, \eqref{eq:uniform-short-time-C1} and
\eqref{eq:frozen-reference-close} give
\[
 \sup_{0\le t\le T}
 \|u_*^{v_0}(t)+h(t)-v_0\|_{C^1}
 \le C(M_{\rm ref}+r)T^{\theta_0}.
\]
After reducing $T$ this is less than $d_0/2$, so the whole fixed-point ball
takes values in $\mathcal O$, uniformly in $v_0$.  On that state set,
$\mathcal A$ and the quadratic map $F$ are uniformly locally Lipschitz.  The
uniform frozen maximal-regularity estimate, the preceding $C^1$ modulus, and
the time integration of the quadratic term give
\begin{align*}
 \|\mathfrak T_{v_0,T}(h)\|_{\mathbb E_T}
 &\le C\bigl(T^{\theta_0}+T^{1/p}\bigr)(M_{\rm ref}+r)^2,\\
 \|\mathfrak T_{v_0,T}(h_1)-\mathfrak T_{v_0,T}(h_2)\|_{\mathbb E_T}
 &\le C\bigl(T^{\theta_0}+T^{1/p}\bigr)(M_{\rm ref}+r)
       \|h_1-h_2\|_{\mathbb E_T},
\end{align*}
where all constants are independent of the admissible datum.  The principal perturbation contains the factor $T^{\theta_0}$ from the coefficient difference
$\|C(u_*^{v_0}+h)-C(v_0)\|_{L^\infty}$ and is then paired with the
$L^p(0,T;X_1)$ norm of $u_*^{v_0}+h$.  The absolute quadratic forcing contains the factor
$T^{1/p}$ from the common $C^1$ bound, whereas in the difference estimate the
zero initial trace of $h_1-h_2$ gives the additional $T^{\theta_0}$ factor in
its $C^1$ norm.  Thus none of the contraction smallness uses a later a-priori
regularity estimate.  Taking $T>0$ smaller once more makes
$\mathfrak T_{v_0,T}$ a strict contraction of the closed
$r$-ball into itself.  This proves local existence with a common lower bound
$\tau_*(\cK,M)>0$.  The same estimates for the difference of two solutions
with the same trace give uniqueness on a short interval, and iteration gives
uniqueness on every common existence interval.

For completeness, the same construction gives the asserted dependence on the datum.  If $v_0$ and $\widetilde v_0$ are nearby, the difference of their frozen reference solutions satisfies
\[
 \partial_t w+\mathcal A(v_0)w
 =\bigl(\mathcal A(\widetilde v_0)-\mathcal A(v_0)\bigr)
       u_*^{\widetilde v_0},\qquad
 w(0)=v_0-\widetilde v_0.
\]
Uniform nonzero-trace maximal regularity on the fixed horizon and
\eqref{eq:frozen-reference-bound} give
$\|w\|_{\mathbb E_{T_0}}\le C\|v_0-\widetilde v_0\|_{X_{\gamma,p}}$.
The zero-trace inverses depend locally Lipschitz continuously on the frozen
operator by the resolvent identity for bounded inverses.  The uniform
contraction estimate therefore gives the same dependence for the fixed
points.  Applying the fixed-horizon trace embedding to the same extension gives
\[
 \|w\|_{C([0,T];X_{\gamma,p})}
 \le C\bigl(\|w\|_{\mathbb E_T}+\|w(0)\|_{X_{\gamma,p}}\bigr),
 \qquad 0<T\le T_0,
\]
with a constant independent of $T$.  Applying this estimate to differences
gives the asserted dependence in the continuous trace norm.
Finitely many restarts along a fixed compact segment of the
reference trajectory establish this conclusion on every smaller common
existence interval.

Repeated restart constructs the maximal interval.  Finally suppose
$T_{\max}<\infty$, the range stays in one compact $\cK\Subset\mathcal U$, but
$\|u(t)\|_{X_{\gamma,p}}\not\to\infty$.  Then there are $M<\infty$ and
$t_n\uparrow T_{\max}$ with $\|u(t_n)\|_{X_{\gamma,p}}\le M$.  The common
lifespan $\tau_*(\cK,M)$ applies to every restarted datum $u(t_n)$; for large
$n$ it extends the solution beyond $T_{\max}$, contradicting maximality.
Thus the stated continuation alternative holds.
\end{proof}

\section{Stieltjes coordinates, root dynamics, and affine moments}
\label{sec:stieltjes-block}
\subsection{Short-time positivity and validity of root coordinates}
\label{sec:root-entry}
From this section onward we impose the affine Darken assumptions of \Cref{thm:main}, including the distinct fixed poles.
The present section develops the fixed spectral representation used for both the scalar regularity argument and the later system estimates.  We first establish the validity of the root coordinates, then derive the global Stieltjes representation and root equations, and finally introduce the affine moment coordinates used after H\"older regularity has been obtained.

Uniform positivity of the initial concentrations and continuity in the
trace space imply that the local maximal-$L^p$ solution remains in the
open simplex on an initial time interval.  The Stieltjes chart is therefore
well defined on that interval.

\begin{lemma}[Short-time interior persistence]\label{lem:short-positive}
Let $x^0$ satisfy \eqref{eq:initial-data}, and put
\[
 \kappa_0:=\min_{1\le i\le N}\min_{\overline\Omega}x_i^0>0.
\]
For the local solution of \Cref{thm:local} there is $T_0>0$ such that
\[
 x_i(t,y)\ge \frac{\kappa_0}{2}
 \qquad (0\le t\le T_0,\ y\in\overline\Omega,\ i=1,\ldots,N).
\]
Consequently the Stieltjes roots introduced in \Cref{sec:roots} are well defined and belong to a compact root rectangle on $[0,T_0]$.
\end{lemma}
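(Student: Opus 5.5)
The argument is a continuity statement: continuity of the local solution in the trace space $X_{\gamma,p}$, combined with the embedding $X_{\gamma,p}\hookrightarrow C^{1+\eta}(\overline\Omega)\hookrightarrow C(\overline\Omega)$. By \Cref{thm:local}, the reduced solution $u=(x_1,\ldots,x_m)$ lies in $C([0,T_{\max});X_{\gamma,p})$. Composing with the embedding gives $u\in C([0,T_{\max});C(\overline\Omega;\R^m))$. Since $x_N=1-\one^{\sf T}u$ is an affine function of $u$, every component $x_i$, $i=1,\ldots,N$, belongs to $C([0,T_{\max});C(\overline\Omega))$. Concretely, there is an embedding constant $C_e$ such that
\[
 \max_{1\le i\le N}\|x_i(t)-x_i^0\|_{C(\overline\Omega)}
 \le m\,C_e\,\|u(t)-u^0\|_{X_{\gamma,p}} .
\]

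Next choose $T_0\in(0,T_{\max})$ so small that the right-hand side is at most $\kappa_0/2$ for $0\le t\le T_0$. For every $y\in\overline\Omega$, every such $t$, and every $i$, this gives
\[
 x_i(t,y)\ge x_i^0(y)-\frac{\kappa_0}{2}\ge \frac{\kappa_0}{2}.
\]
This is the asserted lower bound. Because $\sum_ix_i=1$ is preserved by construction, each $x_i\le1-(N-1)\kappa_0/2<1$. Hence the state stays in the compact set
\[
 \cS_{\kappa_0/2}=\{x\in\cS:\ x_i\ge\kappa_0/2\ \text{for all } i\}\Subset\cS^\circ .
\]
One point needs checking: the local solution of \Cref{thm:local} takes values in $\mathcal U$, which may leave the simplex. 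However, the full vector $x=(u,1-\one^{\sf T}u)$ sums to one identically, so the lower bound places it in $\cS^\circ$ itself.

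For the consequence about the roots, I will use the definition anticipated from \Cref{prop:fixed-frame-characterization}. There, the $z_j$ are the roots of the Stieltjes function $\sum_i x_i/(\delta_i-z)$, and in the reduced setting there are $m=N-1$ of them, with exactly one in each gap $(\delta_j,\delta_{j+1})$. Each root is a continuous function of $x\in\cS^\circ$, by the implicit function theorem applied to a simple root: the derivative $\sum_ix_i/(\delta_i-z)^2$ is strictly positive. So the roots are defined at every point of $[0,T_0]\times\overline\Omega$. The image of the compact set $\cS_{\kappa_0/2}$ under this continuous map is compact and lies in the open product of gaps $\prod_j(\delta_j,\delta_{j+1})$. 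It is therefore contained in a closed rectangle $\prod_j[\delta_j+\epsilon,\delta_{j+1}-\epsilon]$ for some $\epsilon>0$ depending only on $\kappa_0$ and $\delta$.

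If one prefers an explicit $\epsilon$, it can be obtained by a direct estimate. Near $\delta_j$ the term $x_j/(\delta_j-z)$ has size at least $(\kappa_0/2)/|z-\delta_j|$. The remaining terms are bounded by a constant times $1/\min_{k\ne l}|\delta_k-\delta_l|$. Hence no root lies within a distance of order $\kappa_0$ of a pole.

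The main obstacle is not analytic but one of bookkeeping. The root coordinates are defined only in the following section, so this part of the lemma must be phrased through the pointwise interlacing property and the continuity of the roots as functions of the composition. The positivity bound itself is just trace-space continuity combined with the $C^{1+\eta}$ embedding.
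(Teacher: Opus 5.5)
Your proof is correct and follows the same route as the paper. Continuity of the local solution in $X_{\gamma,p}$, together with the embedding into $C(\overline\Omega)$, gives the uniform lower bound $\kappa_0/2$, and continuity of the root map on $\cS^\circ$ then gives the compact root rectangle. Your use of the image of the compact set $\{x\in\cS:\ x_i\ge\kappa_0/2\}$ is a slightly cleaner way than the paper's appeal to the initial image $x^0(\overline\Omega)$ plus continuity: it yields the rectangle on all of $[0,T_0]$ with a margin depending only on $\kappa_0$ and $\delta$, without shrinking $T_0$ further.
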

\begin{proof}
Because $p>d+2$, the trace space $X_{\gamma,p}$ embeds into $C^1(\overline\Omega)$ and in particular into $C(\overline\Omega)$.  The local solution is continuous from $[0,T_0]$ into $X_{\gamma,p}$ after $T_0$ is reduced if necessary.  Hence
\[
 \max_i\|x_i(t)-x_i^0\|_{L^\infty}\le \kappa_0/2
\]
for all sufficiently small $t$.  The lower bound follows.  The root map is continuous on $\cS^\circ$ and the initial image of the compact set $x^0(\overline\Omega)$ lies a positive distance from every pole $\delta_i$; continuity then gives a compact root rectangle for a short time.
\end{proof}

This short-time persistence argument establishes the interval on which
the root coordinates can first be introduced.
The invariant root rectangle in \Cref{thm:root-rectangle} subsequently
gives a time-uniform quantitative separation from the simplex boundary.

\subsection{Stieltjes root geometry}
\label{sec:roots}
Let
\[
 D_0=\diag(\delta_1,\ldots,\delta_N),
 \qquad
 A_0(x)=(I-x\one^{\sf T})D_0.
\]
For $x\in\cS^\circ$ define
\begin{equation}
 m_x(z)=\sum_{i=1}^N\frac{x_i}{\delta_i-z}.
 \label{eq:stieltjes}
\end{equation}
Since
\[
 m_x'(z)=\sum_i\frac{x_i}{(\delta_i-z)^2}>0,
\]
there is a unique zero $z_j$ in every gap $(\delta_j,\delta_{j+1})$:
\begin{equation}
 \delta_1<z_1<\delta_2<z_2<\cdots<z_{N-1}<\delta_N.
 \label{eq:interlace}
\end{equation}

\begin{proposition}[Characteristic polynomial and global root chart]
\label{prop:root-chart}
The nonzero spectrum of $A_0|_E$ is $\{z_1,\ldots,z_{N-1}\}$, and
\begin{equation}
 \det(A_0-zI)
 =-z\prod_{i=1}^N(\delta_i-z)m_x(z).
 \label{eq:charpoly}
\end{equation}
Moreover the map
\[
 x\longmapsto(z_1,\ldots,z_{N-1})
\]
is a real-analytic diffeomorphism from $\cS^\circ$ onto
\[
 \cZ:=\prod_{j=1}^{N-1}(\delta_j,\delta_{j+1}),
\]
with inverse
\begin{equation}
 x_i(z)
 =\frac{\prod_{j=1}^{N-1}(\delta_i-z_j)}
 {\prod_{k\ne i}(\delta_i-\delta_k)}.
 \label{eq:inverse-root}
\end{equation}
\end{proposition}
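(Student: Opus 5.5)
We prove the three assertions in turn: the determinant formula \eqref{eq:charpoly}, the identification of the spectrum of $A_0|_E$, and the global root chart with inverse \eqref{eq:inverse-root}.

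\textbf{Determinant formula.} Since $A_0-zI=(D_0-zI)-x\one^{\sf T}D_0$ is a rank-one perturbation of a diagonal matrix, we apply the matrix determinant lemma for $z\notin\{\delta_i\}$:
\[
 \det(A_0-zI)=\prod_i(\delta_i-z)\Bigl(1-\one^{\sf T}D_0(D_0-zI)^{-1}x\Bigr).
\]
Writing $\delta_i/(\delta_i-z)=1+z/(\delta_i-z)$ and using $\one^{\sf T}x=1$, the bracket equals $-z\,m_x(z)$. This gives \eqref{eq:charpoly} on the complement of the poles. Both sides are polynomials in $z$, because the poles of $m_x$ cancel against $\prod_i(\delta_i-z)$, so the identity holds for all $z$.

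\textbf{Spectrum of $A_0|_E$.} Observe that $\one^{\sf T}A_0=\one^{\sf T}D_0-\one^{\sf T}D_0=0$. Hence $A_0$ maps $\R^N$ into $E$, and $0$ is an eigenvalue with a left eigenvector transverse to $E$. The characteristic polynomial of $A_0$ therefore factors as $-z$ times that of $A_0|_E$. By \eqref{eq:charpoly}, the latter is, up to sign, $\prod_i(\delta_i-z)\,m_x(z)$, a polynomial of degree $N-1$. Its zeros are exactly the $N-1$ interlacing roots $z_j$ from \eqref{eq:interlace}, and none of them is zero since $z_j>\delta_1>0$. As a consistency check, $A_0q_j=z_jq_j$ for the vectors of \Cref{prop:fixed-frame-characterization}.

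\textbf{Inverse formula.} Set $P(z):=\prod_i(\delta_i-z)\,m_x(z)=\sum_i x_i\prod_{k\ne i}(\delta_k-z)$. This polynomial has degree $N-1$, with leading coefficient $(-1)^{N-1}\sum_ix_i=(-1)^{N-1}$, and it vanishes at every $z_j$. Therefore
\[
 P(z)=\prod_j(z_j-z).
\]
Evaluating at $z=\delta_i$ gives $x_i\prod_{k\ne i}(\delta_k-\delta_i)=\prod_j(z_j-\delta_i)$. The sign factors $(-1)^{N-1}$ on both sides cancel, which yields \eqref{eq:inverse-root}.

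\textbf{Global chart.} The map $x\mapsto z$ is real-analytic on $\cS^\circ$ by the implicit function theorem, since each root is simple: $m_x'(z_j)>0$. For surjectivity, take $z\in\cZ$ and define $x(z)$ by \eqref{eq:inverse-root}. A sign count shows each $x_i(z)>0$: in the numerator, exactly $N-i$ factors are negative (those with $j\ge i$), and in the denominator, exactly $N-i$ factors are negative (those with $k>i$). To see that $\sum_ix_i(z)=1$ and that the roots of $x(z)$ are $z$, use partial fractions:
\[
 \frac{\prod_j(z_j-w)}{\prod_k(\delta_k-w)}=\sum_i\frac{x_i(z)}{\delta_i-w}.
\]
Comparing the behaviour as $w\to\infty$ gives $\sum_ix_i=1$, and the left side vanishes exactly at $w=z_j$, so $m_{x(z)}$ has roots $z$. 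Injectivity follows from the explicit inverse. The inverse map is polynomial, hence analytic, so the chart is a real-analytic diffeomorphism.

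The only delicate step is the sign bookkeeping in the positivity check and in matching the leading coefficient. Everything else is direct algebra.
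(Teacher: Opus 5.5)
Your proof is correct and follows the same route as the paper. You apply the matrix determinant lemma with $\delta_i/(\delta_i-z)=1+z/(\delta_i-z)$, identify $\prod_i(\delta_i-z)m_x(z)$ with $\prod_j(z_j-z)$ and evaluate it at $z=\delta_i$, and use interlacing for positivity. You also spell out two steps the paper leaves implicit: the factor $-z$ coming from $\one^{\sf T}A_0=0$, and the partial-fraction check (instead of the Lagrange interpolation identity) that $x(z)$ sums to one and has roots exactly $z$.
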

\begin{proof}
The determinant lemma applied to $A_0=D_0-x\delta^{\sf T}$ gives
\[
 \det(A_0-zI)
 =\det(D_0-zI)\left(1-\delta^{\sf T}(D_0-zI)^{-1}x\right).
\]
Since
\[
 \frac{\delta_i}{\delta_i-z}=1+\frac{z}{\delta_i-z},
\]
the bracket equals $-z m_x(z)$, proving \eqref{eq:charpoly}.  The inverse formula follows by writing the numerator polynomial
\[
 Q_x(z)=m_x(z)\prod_i(\delta_i-z)
\]
and evaluating at $z=\delta_i$.  Interlacing gives positivity of every $x_i(z)$ and the Lagrange interpolation identity gives $\sum_i x_i=1$.  Analyticity and mutual inversion are immediate.
\end{proof}

\begin{lemma}[Stieltjes product identity and reciprocal pole moment]\label{lem:stieltjes-product}
For $x\in\cS^\circ$ with root coordinates $z=(z_1,\ldots,z_{N-1})$,
\begin{equation}\label{eq:stieltjes-product}
 m_x(\zeta)
 =\frac{\prod_{j=1}^{N-1}(z_j-\zeta)}
 {\prod_{i=1}^{N}(\delta_i-\zeta)}.
\end{equation}
In particular the reciprocal pole moment
\begin{equation}\label{eq:Bdelta-product}
 B_\delta(x):=\sum_{i=1}^N\frac{x_i}{\delta_i}=m_x(0)
 =\frac{\prod_{j=1}^{N-1}z_j}{\prod_{i=1}^{N}\delta_i}.
\end{equation}
\end{lemma}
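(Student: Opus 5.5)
The identity is a partial-fraction statement about a rational function. The plan is to compare both sides as rational functions of $\zeta$ and to reduce everything to the inverse formula \eqref{eq:inverse-root} of \Cref{prop:root-chart}. Put
\[
 Q(\zeta):=\prod_{j=1}^{N-1}(z_j-\zeta),\qquad \Pi(\zeta):=\prod_{i=1}^N(\delta_i-\zeta).
\]
Since $\deg Q=N-1<N=\deg\Pi$ and the poles $\delta_i$ are distinct, the partial fraction expansion is
\[
 \frac{Q(\zeta)}{\Pi(\zeta)}=\sum_{i=1}^N\frac{c_i}{\delta_i-\zeta},
 \qquad c_i=\frac{Q(\delta_i)}{\prod_{k\ne i}(\delta_k-\delta_i)}.
\]
The coefficient $c_i$ is obtained by multiplying by $\delta_i-\zeta$ and evaluating at $\zeta=\delta_i$. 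There is no polynomial part, because the degree of the numerator is smaller.

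Next I compare $c_i$ with $x_i(z)$ from \eqref{eq:inverse-root}. The sign factors are
\[
 Q(\delta_i)=(-1)^{N-1}\prod_j(\delta_i-z_j),\qquad \prod_{k\ne i}(\delta_k-\delta_i)=(-1)^{N-1}\prod_{k\ne i}(\delta_i-\delta_k).
\]
They cancel, so $c_i=x_i$, and therefore $Q/\Pi=\sum_i x_i/(\delta_i-\zeta)=m_x(\zeta)$. This proves \eqref{eq:stieltjes-product}.

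As a consistency check, one can argue alternatively that $m_x\Pi$ is a polynomial of degree at most $N-1$. Its leading coefficient is $(-1)^{N-1}\sum_i x_i=(-1)^{N-1}$, and it vanishes at the $N-1$ roots $z_j$. Hence it equals $Q$, which matches the normalization $\sum_i x_i=1$ from \Cref{prop:root-chart}.

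For \eqref{eq:Bdelta-product}, note that $0<\delta_1$ is not a pole, so we may set $\zeta=0$. Then $m_x(0)=\sum_i x_i/\delta_i=B_\delta(x)$, and the right-hand side becomes $\prod_j z_j/\prod_i\delta_i$.

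The only point requiring care is the bookkeeping of the signs $(-1)^{N-1}$ between $\prod(z_j-\zeta)$ and the orientation $\prod(\delta_i-z_j)$ used in \eqref{eq:inverse-root}. Everything else follows directly from \Cref{prop:root-chart}.
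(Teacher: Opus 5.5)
Your proof is correct, and your ``consistency check'' is exactly the paper's proof. There, $Q_x(\zeta)=m_x(\zeta)\prod_i(\delta_i-\zeta)$ is identified with $\prod_j(z_j-\zeta)$ because the two polynomials have degree $N-1$, the same leading coefficient (fixed by $\sum_ix_i=1$), and the same $N-1$ distinct zeros; one then sets $\zeta=0$.

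Your primary route is different. You expand $\prod_j(z_j-\zeta)/\prod_i(\delta_i-\zeta)$ in partial fractions and match the coefficients with $x_i$ via \eqref{eq:inverse-root}, and your sign bookkeeping is right. Be aware of the logical order in the paper, though. There \eqref{eq:inverse-root} is derived by evaluating $Q_x$ at $\zeta=\delta_i$. That yields the product formula for $x_i$ only once one already knows $Q_x(\zeta)=\prod_j(z_j-\zeta)$, which is the content of this lemma.

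So, taking \Cref{prop:root-chart} as given, your residue argument is a valid proof, but it is logically downstream of the identity it proves. What it really shows is that \eqref{eq:inverse-root} and \eqref{eq:stieltjes-product} are the same statement read in two directions. The zero-counting argument is self-contained and should carry the weight.

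Your computation does add the converse direction. Applied to an arbitrary $z\in\cZ$, it shows that the point $x(z)$ defined by \eqref{eq:inverse-root} has $m_{x(z)}$ vanishing exactly at $z_1,\ldots,z_{N-1}$. Together with the sign count for positivity, this means the formula is a right inverse of the root map, which is the surjectivity part of \Cref{prop:root-chart}.
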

\begin{proof}
The two sides of \eqref{eq:stieltjes-product} have the same simple poles $\delta_i$, the same zeros $z_1,\ldots,z_{N-1}$, and the same asymptotic behaviour $-\zeta^{-1}+O(\zeta^{-2})$ as $|\zeta|\to\infty$.  Equivalently, the numerator polynomial
\[
 Q_x(\zeta)=m_x(\zeta)\prod_i(\delta_i-\zeta)
\]
has degree $N-1$, zeros $z_j$, and leading coefficient fixed by $\sum_i x_i=1$.  Evaluating at $\zeta=0$ gives \eqref{eq:Bdelta-product}.
\end{proof}

\begin{remark}[Role of the product identity]\label{rem:trace-determinant}
The product identity is not needed to determine the modal diffusivities in the general affine Darken class; those are given directly by $\sigma+\tau z_j$.  It identifies the constant-friction subclass.  For $\sigma=0$ and $\tau=B_\delta^{-1}$, \eqref{eq:Bdelta-product} yields
\[
 \alpha_j=\frac{z_j}{B_\delta(x(z))}
 =\frac{\prod_i\delta_i}{\prod_{k\ne j}z_k},
\]
which is the selected-root-independent formula of the earlier constant multiplicative-friction model.  The present theorem shows that this extra identity is not required by the global regularity argument.
\end{remark}

Write
\[
 q_j=\partial_{z_j}x.
\]
Then from \eqref{eq:inverse-root},
\begin{equation}
 (q_j)_i=-\frac{x_i}{\delta_i-z_j}.
 \label{eq:qj}
\end{equation}

\begin{lemma}[Root-frame identities]
\label{lem:root-frame}
For every $j$,
\begin{equation}
 A_0q_j=z_jq_j,
 \qquad \partial_{z_jz_j}x=0.
 \label{eq:eig-affine}
\end{equation}
For $j\ne k$,
\begin{equation}
 \partial_{z_jz_k}x
 =\frac{q_j-q_k}{z_k-z_j}.
 \label{eq:mixed-connection}
\end{equation}
\end{lemma}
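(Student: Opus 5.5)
All three identities follow by direct componentwise computation from the inverse formula \eqref{eq:inverse-root} and the frame formula \eqref{eq:qj}. No spectral argument is needed beyond the defining root equation $m_x(z_j)=0$. I would take them in the order stated.

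\emph{Eigenvector identity.} Since $A_0=D_0-x\delta^{\sf T}$,
\[
 (A_0q_j)_i=\delta_i(q_j)_i-x_i\sum_k\delta_k(q_j)_k .
\]
Writing $\delta_i=z_j+(\delta_i-z_j)$ gives
\[
 \delta_i(q_j)_i=z_j(q_j)_i-x_i,
 \qquad
 \sum_k\delta_k(q_j)_k=z_j\one^{\sf T}q_j-1 .
\]
Moreover $\one^{\sf T}q_j=-m_x(z_j)=0$. Substituting, $(A_0q_j)_i=z_j(q_j)_i-x_i+x_i=z_j(q_j)_i$. This also confirms $q_j\in E$.

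\emph{Second derivatives.} Because $x_i(z)$ in \eqref{eq:inverse-root} is a product of linear factors $(\delta_i-z_l)$, each root appears exactly once. Hence $x_i$ is affine in each single $z_j$, and $\partial_{z_jz_j}x_i=0$ immediately. For $j\ne k$, differentiating $(q_j)_i=-x_i/(\delta_i-z_j)$ in $z_k$ and using $\partial_{z_k}x_i=(q_k)_i$ gives
\[
 \partial_{z_jz_k}x_i=\frac{x_i}{(\delta_i-z_j)(\delta_i-z_k)} .
\]
The partial-fraction identity
\[
 \frac{1}{(\delta_i-z_j)(\delta_i-z_k)}
 =\frac{1}{z_k-z_j}\Bigl(\frac{1}{\delta_i-z_k}-\frac{1}{\delta_i-z_j}\Bigr)
\]
then turns this into
\[
 \partial_{z_jz_k}x_i=\frac{-(q_k)_i+(q_j)_i}{z_k-z_j},
\]
which is \eqref{eq:mixed-connection}.

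The only point needing care is the sign bookkeeping in \eqref{eq:qj}, namely that $\partial_{z_j}$ of $(\delta_i-z_j)$ contributes $-1$. Everything else is routine algebra, so I expect no genuine obstacle. If the sign convention causes trouble, I would verify it first by differentiating $\log x_i$ from \eqref{eq:inverse-root}.
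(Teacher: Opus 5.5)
Your proof is correct and matches the paper's argument: your componentwise identities $\delta_i(q_j)_i=z_j(q_j)_i-x_i$ and $\sum_k\delta_k(q_j)_k=-1$ are the paper's $(D_0-z_jI)q_j=-x$ and $\one^{\sf T}D_0q_j=-1$. Like the paper, you get the other two identities from the separately affine structure of \eqref{eq:inverse-root} and from differentiating \eqref{eq:qj} in a transverse root, and you also write out the partial-fraction step that the paper leaves implicit.
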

\begin{proof}
Equation \eqref{eq:qj} and $m_x(z_j)=0$ give $\one^{\sf T}q_j=0$.  Further,
\[
 (D_0-z_jI)q_j=-x,
 \qquad \one^{\sf T}D_0q_j=-1,
\]
so
\[
 A_0q_j=D_0q_j-x\one^{\sf T}D_0q_j=z_jq_j.
\]
The product formula \eqref{eq:inverse-root} is affine separately in each $z_j$, proving the second identity in \eqref{eq:eig-affine}.  Differentiating \eqref{eq:qj} in a transverse root gives \eqref{eq:mixed-connection}.
\end{proof}

\begin{proposition}[Dual spectral frame and Fisher metric in root variables]\label{prop:dual-root-frame}
Let
\[
 g_x(v,w):=\sum_{i=1}^N\frac{v_iw_i}{x_i},
 \qquad v,w\in E=\{q:\one^{\sf T}q=0\},
\]
be the Fisher metric on the tangent space, and write
\[
 m_j':=m_x'(z_j)=\sum_i\frac{x_i}{(\delta_i-z_j)^2}.
\]
Then the root vectors are Fisher orthogonal,
\begin{equation}\label{eq:fisher-root-orthogonal}
 g_x(q_j,q_k)=m_j'\,\delta_{jk},
\end{equation}
and the differentials of the roots are the normalized Fisher-dual covectors,
\begin{equation}\label{eq:root-dual}
 dz_j[v]=\frac{g_x(q_j,v)}{m_j'}
 =-\frac{1}{m_j'}\sum_i\frac{v_i}{\delta_i-z_j}.
\end{equation}
Consequently, in root coordinates,
\begin{equation}\label{eq:fisher-root-metric}
 g_x(v,w)=\sum_{j=1}^{N-1}m_j'\,dz_j[v]dz_j[w].
\end{equation}
For the affine Darken mobility $A=\sigma I_E+\tau A_0$ on $E$, with
$\alpha_j=\sigma+\tau z_j$, one has
\begin{equation}\label{eq:fisher-mobility-root}
 g_x(v,Aw)=\sum_{j=1}^{N-1}\alpha_jm_j'\,dz_j[v]dz_j[w].
\end{equation}
\end{proposition}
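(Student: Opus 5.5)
The argument reduces everything to the explicit root-frame vectors \eqref{eq:qj} and the partial-fraction structure of the Stieltjes function; the steps are orthogonality, dual covectors, the metric expansion, and finally the mobility formula.

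\emph{Fisher orthogonality.} By \eqref{eq:qj},
\[
 g_x(q_j,q_k)=\sum_i\frac{x_i}{(\delta_i-z_j)(\delta_i-z_k)}.
\]
For $j=k$ this is $m_j'$ by definition. For $j\ne k$ I use the partial fraction identity
\[
 \frac{1}{(\delta_i-z_j)(\delta_i-z_k)}=\frac{1}{z_j-z_k}\Bigl(\frac1{\delta_i-z_j}-\frac1{\delta_i-z_k}\Bigr).
\]
This gives $g_x(q_j,q_k)=(m_x(z_j)-m_x(z_k))/(z_j-z_k)=0$, because both points are roots of $m_x$. This proves \eqref{eq:fisher-root-orthogonal}.

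\emph{Dual covectors and the metric.} Since $x\mapsto z$ is a diffeomorphism (\Cref{prop:root-chart}) with $q_j=\partial_{z_j}x$, the $q_j$ form a basis of $E$, and $dz_j$ is the dual basis: $dz_j[q_k]=\delta_{jk}$. The covector $v\mapsto g_x(q_j,v)/m_j'$ takes the same values on this basis by the orthogonality step, so it equals $dz_j$. Writing out $g_x(q_j,v)=-\sum_i v_i/(\delta_i-z_j)$ gives \eqref{eq:root-dual}. For the metric, expand $v=\sum_j dz_j[v]q_j$ and $w=\sum_k dz_k[w]q_k$; bilinearity together with orthogonality yields \eqref{eq:fisher-root-metric}.

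\emph{Mobility formula.} By \Cref{prop:fixed-frame-characterization}, or directly from \Cref{lem:root-frame} and $A=\sigma I_E+\tau A_0$, we have $Aq_k=\alpha_kq_k$. Hence
\[
 Aw=\sum_k\alpha_k\,dz_k[w]\,q_k,
\]
and pairing with $v=\sum_j dz_j[v]q_j$, using orthogonality again, gives \eqref{eq:fisher-mobility-root}. The operator $A$ here is the restriction to $E$, and $q_k\in E$ by \Cref{lem:root-frame}, so all pairings are legitimate.

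Everything after the orthogonality step is linear algebra. The only point needing care is the off-diagonal cancellation, which depends on $z_j\ne z_k$ and on $m_x(z_j)=m_x(z_k)=0$; both are guaranteed by the interlacing \eqref{eq:interlace}.
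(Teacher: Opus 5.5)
Your proof is correct and matches the paper's argument: Fisher orthogonality comes from the divided difference of $m_x$ at two of its zeros, the metric formula from the expansion $v=\sum_j dz_j[v]\,q_j$, and the mobility formula from $Aq_j=\alpha_jq_j$. The only difference is in the dual-covector step, where the paper differentiates $m_x(z_j)=0$ in the direction $v$ to obtain $0=\sum_i v_i/(\delta_i-z_j)+m_j'\,dz_j[v]$ directly, whereas you identify $dz_j$ by testing $g_x(q_j,\cdot)/m_j'$ against the dual-basis relations $dz_j[q_k]=\delta_{jk}$; both are equally valid.
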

\begin{proof}
By \eqref{eq:qj},
\[
 g_x(q_j,q_k)=\sum_i\frac{x_i}{(\delta_i-z_j)(\delta_i-z_k)}.
\]
For $j\ne k$ this is the divided difference
$[m_x(z_j)-m_x(z_k)]/(z_j-z_k)$ and therefore vanishes; for $j=k$ it equals $m_j'$.  Differentiating the identity $m_x(z_j)=0$ in a tangent direction $v$ gives
\[
 0=\sum_i\frac{v_i}{\delta_i-z_j}+m_j'\,dz_j[v],
\]
which is \eqref{eq:root-dual}.  Since $q_j=\partial_{z_j}x$, every tangent vector has the expansion $v=\sum_jdz_j[v]q_j$, and \eqref{eq:fisher-root-metric} follows from orthogonality.  Finally $Aq_j=\alpha_jq_j$ gives \eqref{eq:fisher-mobility-root}.
\end{proof}

The last identity is the spectral form of the mixing-entropy dissipation.  It also explains why the same weights $\alpha_jm_j'$ appear below when the mixing entropy is written as an EPD entropy in the root chart.

\subsection{Stieltjes eigenvectors and the root equations}
\label{sec:root-system}
For the affine Darken class, \eqref{eq:affine-compression} and \Cref{lem:root-frame} give
\begin{equation}\label{eq:alpha}
 A(x)q_j=\alpha_j(z)q_j,
 \qquad
 \alpha_j(z)=\sigma(x(z))+\tau(x(z))z_j.
\end{equation}
The modal coefficients inherit the physical ellipticity bounds
\begin{equation}\label{eq:alpha-bounds}
 d_*\le\alpha_j(z)\le d^*.
\end{equation}
Indeed, since $z_j\in(\delta_j,\delta_{j+1})$ and the map $\delta\mapsto\sigma+\tau\delta$ is affine,
\begin{equation}\label{eq:alpha-convex-combination}
 \alpha_j
 =\frac{\delta_{j+1}-z_j}{\delta_{j+1}-\delta_j}\,d_j
 +\frac{z_j-\delta_j}{\delta_{j+1}-\delta_j}\,d_{j+1}.
\end{equation}
Thus the fixed Stieltjes eigenvectors survive although the modal diffusivities may depend on all roots, including the selected coordinate $z_j$.  The convex-combination formula also shows that no sign condition on $\tau$ is needed for modal ellipticity.  On every compact root rectangle the derivatives of the $\alpha_j$ up to the order needed below are uniformly bounded by the $C^3$ norms of $\sigma$ and $\tau$ on the corresponding compact concentration range.

\begin{theorem}[Exact root system]
\label{thm:root-system}
On every positive strong-solution strip the roots satisfy
\begin{equation}
 \partial_tz_j
 -\Div(\alpha_j(z)\nabla z_j)
 +\sum_{k\ne j}
 \frac{\alpha_j(z)+\alpha_k(z)}{z_j-z_k}
 \nabla z_j\cdot\nabla z_k=0,
 \label{eq:root-system}
\end{equation}
with homogeneous Neumann data $\partial_\nu z_j=0$.
\end{theorem}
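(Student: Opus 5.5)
We derive the root system by substituting the global root chart $x=x(z)$ of \Cref{prop:root-chart} into the full system \eqref{eq:full-system} and projecting onto the Fisher-dual covectors of \Cref{prop:dual-root-frame}. On a positive strong-solution strip $x$ takes values in $\cS^\circ$, so $z=z(x)$ is well defined and, by analyticity of the chart, has the same regularity as $x$. The chain rule gives
\[
 \partial_tx=\sum_j q_j\,\partial_tz_j,
 \qquad
 \nabla x=\sum_k q_k\nabla z_k .
\]
By \eqref{eq:alpha}, the flux then becomes
\[
 A(x)\nabla x=\sum_k\alpha_k q_k\nabla z_k .
\]

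Next we take the divergence. Differentiating $q_k=\partial_{z_k}x$ through the chart and using \Cref{lem:root-frame}, namely $\partial_{z_kz_k}x=0$ and $\partial_{z_lz_k}x=(q_l-q_k)/(z_k-z_l)$, we obtain
\[
 \Div(A\nabla x)=\sum_k q_k\Div(\alpha_k\nabla z_k)
 +\sum_k\sum_{l\ne k}\alpha_k\frac{q_l-q_k}{z_k-z_l}\,\nabla z_l\cdot\nabla z_k .
\]
The first term keeps $\alpha_k$ inside the divergence. This is deliberate: no derivative of $\alpha_k$ in its selected root is ever produced, so the argument does not need $\partial_{z_k}\alpha_k=0$. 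We then apply the linear functional $dz_j$ to both sides; it is constant in space at each point, and $dz_j[q_k]=\delta_{jk}$ by \eqref{eq:root-dual}. This yields
\[
 \partial_tz_j=\Div(\alpha_j\nabla z_j)
 +\sum_k\sum_{l\ne k}\alpha_k\frac{\delta_{jl}-\delta_{jk}}{z_k-z_l}\,\nabla z_l\cdot\nabla z_k .
\]

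The step most prone to sign and index errors is the bookkeeping of the double sum. The terms with $l=j$, $k\ne j$ give $\alpha_k/(z_k-z_j)$, and the terms with $k=j$, $l\ne j$ give $-\alpha_j/(z_j-z_l)=\alpha_j/(z_l-z_j)$. Renaming $l\to k$ in the second group, the coefficient of $\nabla z_j\cdot\nabla z_k$ is
\[
 \frac{\alpha_j+\alpha_k}{z_k-z_j}=-\frac{\alpha_j+\alpha_k}{z_j-z_k}.
\]
This gives exactly \eqref{eq:root-system}. Terms with $j\notin\{k,l\}$ drop out.

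Finally, the boundary condition follows from the Neumann condition. By \Cref{prop:inverse}, $\partial_\nu x=0$ on $\partial\Omega$, and $\partial_\nu z_j=dz_j[\partial_\nu x]=0$.

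For regularity, the strong solution with $p>d+2$ has $\nabla x$ continuous, and the root chart is analytic on $\cS^\circ$. The identities therefore hold a.e.\ in the sense of $W^{1,p}_t L^p\cap L^pW^{2,p}$, which suffices for a strong-solution statement.
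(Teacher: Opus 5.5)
Your proposal is correct and follows the paper's proof essentially step for step: the chain rule, the divergence expansion with $\alpha_k$ kept inside the divergence, the connection identities of \Cref{lem:root-frame}, extraction of the $q_j$-coefficient, and the Neumann condition obtained from $\partial_\nu x=0$. Applying $dz_j$ pointwise is exactly the paper's ``resolution in the basis $q_1,\ldots,q_{N-1}$'', and your index bookkeeping and the resulting sign of the Cauchy coefficient are correct; the remark that $dz_j$ is ``constant in space'' is unnecessary, since it is applied only after the divergence has been computed.
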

\begin{proof}
Since $x_t=\sum_jq_jz_{j,t}$ and $A\nabla x=\sum_j\alpha_jq_j\nabla z_j$,
\[
 \Div(A\nabla x)
 =\sum_jq_j\Div(\alpha_j\nabla z_j)
 +\sum_{j,k}\alpha_j\partial_{z_k}q_j\,
 \nabla z_k\cdot\nabla z_j.
\]
The diagonal connection terms vanish by $\partial_{z_j}q_j=0$.  For $j<k$, Lemma~\ref{lem:root-frame} gives
\[
 \alpha_j\partial_kq_j+\alpha_k\partial_jq_k
 =(\alpha_j+\alpha_k)\frac{q_j-q_k}{z_k-z_j}.
\]
Resolving the resulting identity in the basis $q_1,\ldots,q_{N-1}$ yields \eqref{eq:root-system}.  Since the physical no-flux condition is $A\partial_\nu x=0$, and $A|_E$ is invertible, $\partial_\nu x=0$; smooth invertibility of the root map then gives $\partial_\nu z_j=0$.
\end{proof}

\begin{remark}[Comparison with the additive-friction root equation]
\label{rem:root-system-additive-parallel}
Equation \eqref{eq:root-system} is the same fixed-Stieltjes divergence system that underlies the additive-friction argument in \cite[Section~5]{BotheAdditive2026}.  In both cases the inverse root map is multi-affine and
\[
 \partial_{z_jz_j}x=0,
 \qquad
 \partial_{z_jz_k}x
 =\frac{\partial_{z_j}x-\partial_{z_k}x}{z_k-z_j},
\]
so the mixed Cauchy coefficient is always
$(\alpha_j+\alpha_k)/(z_j-z_k)$.  The constitutive formulas for the modal diffusivities are different: the additive class has
\[
 \alpha_j^{\rm add}=\frac1{e_1-\sum_{k\ne j}z_k},
\]
whereas the present Darken class has $\alpha_j=\sigma(x(z))+\tau(x(z))z_j$.  In particular $\partial_{z_j}\alpha_j$ need not vanish.  The EPD production identity and the subsequent one-sided estimates remain valid because they are written in divergence form and require no differentiation of $\alpha_j$ in the selected direction.  If the principal divergence is expanded instead, it generally contains $\partial_{z_j}\alpha_j\,|\nabla z_j|^2$.  The identity $\partial_{z_jz_j}x=0$ removes the self-square term arising from the coordinate transformation, not this contribution from a variable diffusivity.
\end{remark}

\begin{lemma}[Strong-Sobolev transfer to the root chart]
\label{lem:root-strong-transfer}
Let $I\Subset(0,T_{\max})$ and assume that the concentration range on
$\overline I\times\overline\Omega$ is contained in a compact subset of
$\cS^\circ$.  Then the Stieltjes root transform $z=z(u)$ satisfies
\begin{equation}
 z\in W^{1,p}(I;L^p(\Omega;\R^{N-1}))
 \cap L^p(I;W^{2,p}(\Omega;\R^{N-1}))
 \cap C(\overline I;X_{\gamma,p}),
 \label{eq:strong-root-chain-class}
\end{equation}
and $\partial_\nu z=0$.  In particular, since $p>d+2$,
$z,\nabla z$ are bounded and continuous on every compact positive-time strip.
\end{lemma}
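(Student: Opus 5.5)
The plan is to view $z=Z(u)$ as the composition of the strong solution $u$ with the real-analytic root map. By Proposition~\ref{prop:root-chart}, $x\mapsto z$ is a real-analytic diffeomorphism of $\cS^\circ$ onto $\cZ$; in reduced variables this gives a smooth map $Z:\mathcal V\to\R^{N-1}$ on the open reduced simplex $\mathcal V$. By hypothesis the values $u(t,y)$ for $(t,y)\in\overline I\times\overline\Omega$ lie in a compact set $\cK\subset\mathcal V$. I will choose a compact neighborhood $\cK'\subset\mathcal V$ of $\cK$ and work only with $Z|_{\cK'}$. All derivatives of $Z$ up to order two (in fact any order) are bounded and uniformly Lipschitz on $\cK'$. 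The strong class of $u$ on $\overline I$ comes from \Cref{thm:local}: $u\in W^{1,p}(I;X_0)\cap L^p(I;X_1)\cap C(\overline I;X_{\gamma,p})$, and by $p>d+2$ and the $C^{1+\eta}$ embedding, $u$ and $\nabla u$ are bounded and continuous on $\overline I\times\overline\Omega$.

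The derivative bounds come from the chain rule. The time derivative is $\partial_t z=DZ(u)\partial_t u$, which lies in $L^p(I;L^p)$ because $DZ(u)$ is bounded. For space derivatives, $\nabla z=DZ(u)\nabla u$ and
\[
 \partial_{\alpha\beta}z=DZ(u)\partial_{\alpha\beta}u+D^2Z(u)[\partial_\alpha u,\partial_\beta u].
\]
The first term is in $L^p(I;L^p)$, and the second is bounded because $\nabla u\in L^\infty$. Hence $z\in L^p(I;W^{2,p})$. To justify these chain rules for Sobolev functions (rather than smooth ones), I would approximate $u$ by mollified or smooth functions in the maximal-regularity norm whose values remain in $\cK'$, which is possible by uniform convergence. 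Alternatively, one can replace $Z$ by a $C^3$ extension to all of $\R^m$ that agrees with $Z$ on $\cK'$, and then apply the standard superposition results for $W^{1,p}$ and $W^{2,p}\cap W^{1,\infty}$. The Neumann condition is immediate: $\partial_\nu z=DZ(u)\partial_\nu u=0$, since $\partial_\nu u=0$ in the trace sense (Proposition~\ref{prop:inverse}).

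The main obstacle is continuity in the trace space, $z\in C(\overline I;X_{\gamma,p})$, because $X_{\gamma,p}=B^{2-2/p}_{p,p,N}$ is a fractional space and the superposition operator is not obviously continuous there. I would first try to avoid fractional Nemytskii theory and deduce it from the already established membership $z\in\mathbb E:=W^{1,p}(I;X_0)\cap L^p(I;X_1)$. The trace embedding $\mathbb E\hookrightarrow C(\overline I;X_{\gamma,p})$ holds on bounded intervals; this is the same fact used in the proof of \Cref{thm:local}. That embedding requires $z\in L^p(I;X_1)$, which includes the Neumann condition, and this was just verified. If that membership were in doubt, the fallback is to invoke the boundedness and continuity of smooth superposition operators on $B^{s}_{p,p}\cap L^\infty$ for $1<s<2$ (the Runst--Sickel type results), and to combine them with $u\in C(\overline I;X_{\gamma,p})$. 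Finally, $z$ and $\nabla z$ are bounded and continuous on compact strips, either from the $C^{1+\eta}$ embedding of $X_{\gamma,p}$ or directly from $\nabla z=DZ(u)\nabla u$.
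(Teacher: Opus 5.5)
Your proof is correct. For the time derivative, the second spatial derivatives, and the Neumann trace it coincides with the paper's proof. Both arguments use $z_t=Dz(u)u_t$ and
\[
 D_x^2z=Dz(u)D_x^2u+D^2z(u)[D_xu,D_xu],
\]
control the quadratic term through the $C^{1+\eta}$ bound on $\nabla u$ inherited from $X_{\gamma,p}$, and obtain $\partial_\nu z=Dz(u)\partial_\nu u=0$.

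The one genuine difference is the continuity into $X_{\gamma,p}$. The paper invokes smooth composition in the trace space. That means a superposition (Nemytskii) statement in $B^{2-2/p}_{p,p,N}$ combined with $u\in C(\overline I;X_{\gamma,p})$, which is your fallback. Your primary route first establishes $z\in W^{1,p}(I;X_0)\cap L^p(I;X_1)$; this is where $\partial_\nu z=0$ at a.e.\ time enters, and you have it. You then apply the trace embedding of the maximal-regularity class into $C(\overline I;(X_0,X_1)_{1-1/p,p})=C(\overline I;X_{\gamma,p})$, which is valid on the bounded interval $I$.

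Your route is more self-contained. It uses only a fact already employed in the local theory, and it avoids continuity of superposition operators on fractional Besov spaces, including preservation of the Neumann trace there. The price is that the Neumann condition must be proved before the trace-space continuity. The paper's formulation is shorter but leaves that superposition fact implicit.
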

\begin{proof}
On the stated compact state range the maps $u\mapsto z$ and $z\mapsto u$
are smooth with all derivatives needed below uniformly bounded.  The Bochner
chain rule gives $z_t=D z(u)u_t\in L^p(I;L^p)$.  For spatial derivatives,
\[
 D^2_xz=Dz(u)D_x^2u+D^2z(u)[D_xu,D_xu].
\]
The first term belongs to $L^p$ because $D_x^2u\in L^p$, while the second does
because $X_{\gamma,p}\hookrightarrow C^{1+\eta}$ makes $D_xu$ uniformly
bounded on $I\times\Omega$.  Smooth composition in the trace space gives the
continuity into $X_{\gamma,p}$.  Finally $\partial_\nu u=0$ and the ordinary
chain rule imply $\partial_\nu z=Dz(u)\partial_\nu u=0$.
\end{proof}

\begin{theorem}[Invariant root rectangle]
\label{thm:root-rectangle}
Let $t_0\ge0$ be a time at which the concentrations are strictly positive and let
\[
 a_j\le z_j(t_0,x)\le b_j,
 \qquad
 \delta_j<a_j\le b_j<\delta_{j+1}.
\]
Then for every later time in the maximal strong-existence interval,
\begin{equation}
 a_j\le z_j(t,x)\le b_j.
 \label{eq:rectangle}
\end{equation}
Moreover $\min_xz_j(t,x)$ is nondecreasing and $\max_xz_j(t,x)$ is nonincreasing.  Consequently the concentration range remains in a compact subset of $\cS^\circ$.
\end{theorem}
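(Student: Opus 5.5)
The plan is a componentwise weak maximum principle for the exact root system \eqref{eq:root-system}. The key structural fact is that, for fixed $j$, the equation for $z_j$ is a scalar linear parabolic equation in $z_j$ with bounded drift,
\[
 \partial_tz_j-\Div(\alpha_j\nabla z_j)+b_j\cdot\nabla z_j=0,
 \qquad
 b_j:=\sum_{k\ne j}\frac{\alpha_j+\alpha_k}{z_j-z_k}\nabla z_k,
\]
with Neumann data. There is no zeroth-order term, so constants are solutions and the comparison principle applies to each root separately. The coefficients $\alpha_j$ and $b_j$ are evaluated along the given solution and treated as frozen.

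\textbf{Step 1: regularity of the frozen coefficients.} Let $T'<T_{\max}$ be such that the concentrations stay in a compact subset of $\cS^\circ$ on $[t_0,T']$; this holds for $T'$ close to $t_0$ by \Cref{lem:short-positive}, started at $t_0$. On $[t_0,T']$, \Cref{lem:root-strong-transfer} (which extends to intervals starting at $t_0$ via the continuity of $u$ into $X_{\gamma,p}$) gives $z\in W^{1,p}L^p\cap L^pW^{2,p}$, with $z$ and $\nabla z$ bounded and continuous and $\partial_\nu z=0$. The interlacing $z_j\in(\delta_j,\delta_{j+1})$ bounds $|z_j-z_k|$ from below on the compact range. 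Hence $b_j\in L^\infty$, and by \eqref{eq:alpha-bounds} the coefficient $\alpha_j$ lies in $[d_*,d^*]$ and is Lipschitz in space.

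\textbf{Step 2: Stampacchia test.} Test the equation with $(z_j-b_j)_+$ over $\Omega$. The boundary term vanishes by the Neumann condition. Using ellipticity and Young's inequality on the drift gives
\[
 \frac{\dd}{\dd t}\int(z_j-b_j)_+^2
 +d_*\int|\nabla(z_j-b_j)_+|^2
 \le C\int(z_j-b_j)_+^2 .
\]
Since the initial value is zero, Gronwall yields $z_j\le b_j$. The lower bound $z_j\ge a_j$ follows the same way.

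Restarting this argument at any intermediate time $s$ with $a_j=\min_x z_j(s)$ gives monotonicity of $\min_x z_j$, and likewise of $\max_x z_j$. Alternatively, since $z$ is $C^{1}$ in space and $W^{1,p}$ in time, one could run a pointwise Hopf-type argument, but the energy test avoids boundary regularity issues.

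\textbf{Step 3: continuation.} Let $T^*$ be the supremum of times $T'$ up to which the range stays in a compact subset of $\cS^\circ$. By Step 2 the roots lie in the closed rectangle $\prod_j[a_j,b_j]\subset\cZ$ on $[t_0,T^*)$. By \Cref{prop:root-chart} and formula \eqref{eq:inverse-root}, this rectangle maps to a fixed compact set $\cK\subset\cS^\circ$, with $x_i\ge\kappa>0$ depending only on $a,b,\delta$. Continuity of $x$ in $t$ then shows that $T^*=T_{\max}$, and the range remains in $\cK$.

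The main obstacle is this bootstrap. The drift $b_j$ is only known to be bounded on compact subintervals, and its bound may blow up as $t\uparrow T_{\max}$. The remedy is that the Gronwall constant may depend on $T'$, while the conclusion $z_j\le b_j$ does not. The argument is therefore run on each $[t_0,T']$ separately, and the resulting $T'$-independent rectangle closes the continuity argument.
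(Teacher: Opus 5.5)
Your proposal is correct and matches the paper's proof: Stampacchia truncation of each scalar root equation with the Cauchy coupling as a bounded drift on compact strips, Gronwall with a strip-dependent constant, restart at intermediate times for monotonicity, and a first-exit argument (your $T^*$) closed by the inverse root formula. The only blemish is notational: you use $b_j$ both for the drift field and for the upper rectangle bound, so rename one of them.
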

\begin{proof}
Work up to the first exit from a slightly enlarged compact root rectangle.  Fix $j$ and an upper level $M$.  With $r=(z_j-M)_+$, test \eqref{eq:root-system} by $r$.  Since $\nabla z_j=\nabla r$ on the active set,
\[
 \frac12\frac{\dd}{\dd t}\|r\|_2^2
 +\int\alpha_j|\nabla r|^2
 =-\sum_{k\ne j}\int
 \frac{\alpha_j+\alpha_k}{z_j-z_k}
 r\,\nabla r\cdot\nabla z_k.
\]
On every compact positive-time strong strip the transverse gradients are bounded and all root gaps are bounded away from zero.  Hence
\[
 \frac12\frac{\dd}{\dd t}\|r\|_2^2
 +\frac{d_*}{2}\|\nabla r\|_2^2
 \le C\|r\|_2^2.
\]
Gronwall gives $r\equiv0$ if it vanishes at the starting time.  The lower truncation is identical.  If $t_0>0$, the standard first-exit argument may be started directly at $t_0$.  When $t_0=0$, short-time interior persistence places the solution in one compact root rectangle on $[0,T_1]$; the same Sobolev-composition argument as in Lemma~\ref{lem:root-strong-transfer} then applies on $(0,T_1)$, while $z\in C([0,T_1];C^1)$ provides the initial trace.  Equivalently, one may start the estimate at $\varepsilon>0$ and let $\varepsilon\downarrow0$.  Thus \eqref{eq:rectangle} holds from $t_0=0$ as well.  Restarting at arbitrary later times with the actual extrema gives monotonicity.  Finally \eqref{eq:inverse-root} is strictly positive on every compact root rectangle.
\end{proof}

\begin{corollary}[Uniform species separation]
\label{cor:species-separation}
If every species is strictly positive at a starting time $t_0$, there is $\kappa(t_0)>0$ determined by the root rectangle at $t_0$ such that
\[
 x_i(t,x)\ge\kappa(t_0)
\]
for every species and all $t\ge t_0$ in the strong-existence interval.  In particular, strictly positive initial data give a time-uniform lower bound from $t=0$.
\end{corollary}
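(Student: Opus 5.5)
The corollary follows from \Cref{thm:root-rectangle} and the explicit inverse root map \eqref{eq:inverse-root}. First I use \Cref{prop:root-chart}: the concentrations at $t_0$ are strictly positive and, by the strong solution class (\Cref{thm:local}, with $X_{\gamma,p}\hookrightarrow C(\overline\Omega)$), continuous on the compact set $\overline\Omega$. Hence the range $x(t_0,\overline\Omega)$ is a compact subset of $\cS^\circ$. The root map is a continuous diffeomorphism $\cS^\circ\to\cZ$, so the image of this range is a compact subset of $\cZ$. I then define
\[
 a_j:=\min_{\overline\Omega}z_j(t_0,\cdot),\qquad b_j:=\max_{\overline\Omega}z_j(t_0,\cdot).
\]
Compactness inside the open box $\cZ$ gives $\delta_j<a_j\le b_j<\delta_{j+1}$.

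Second, \Cref{thm:root-rectangle} applies at $t_0$ with this rectangle, and yields $a_j\le z_j(t,y)\le b_j$ for all later $t$ in the strong-existence interval and all $y\in\overline\Omega$. When $t_0=0$, the theorem already handles the start via short-time interior persistence (\Cref{lem:short-positive}), so no extra argument is needed.

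Third, I evaluate \eqref{eq:inverse-root} on the compact rectangle $R=\prod_j[a_j,b_j]$. For $z\in R$ each factor $\delta_i-z_j$ has a fixed sign and modulus at least $\min_j\min(a_j-\delta_j,\delta_{j+1}-b_j)>0$. Consequently $|x_i(z)|$ is bounded below by an explicit product of these gaps divided by $\prod_{k\ne i}|\delta_i-\delta_k|$. Interlacing fixes the sign as positive, as noted in the proof of \Cref{prop:root-chart}. Equivalently, $x_i$ is continuous and strictly positive on the compact set $R$, so it attains a positive minimum. I set $\kappa(t_0)=\min_i\min_{R}x_i>0$; it depends only on $R$, that is, on the root rectangle at $t_0$, as claimed.

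There is no real obstacle; the only point needing care is that the range at $t_0$ is compact in $\cS^\circ$, so that the rectangle stays away from the poles. This requires continuity of $x(t_0)$ up to the boundary, which the trace-space embedding provides. The final sentence is the case $t_0=0$ under \eqref{eq:initial-data}, where $\inf x_i^0>0$ supplies the starting positivity, giving a bound uniform in time.
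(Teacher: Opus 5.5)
Your proposal is correct and is essentially the paper's proof: the invariant rectangle of \Cref{thm:root-rectangle} gives $z(t)\in R=\prod_j[a_j,b_j]$ for all later times, and the inverse formula \eqref{eq:inverse-root} is bounded below by a positive constant on $R$. The paper writes the explicit constant $\kappa_R=\min_i\prod_j\operatorname{dist}(\delta_i,[a_j,b_j])/\prod_{k\ne i}|\delta_i-\delta_k|$, which is a slightly sharper form of your uniform-gap bound; otherwise the two arguments coincide.
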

\begin{proof}
For $R=\prod_j[a_j,b_j]\Subset\cZ$ put
\[
 \kappa_R:=\min_{1\le i\le N}
 \frac{\prod_{j=1}^{N-1}\operatorname{dist}(\delta_i,[a_j,b_j])}
      {\prod_{k\ne i}|\delta_i-\delta_k|}>0.
\]
The inverse formula \eqref{eq:inverse-root} gives $x_i(z)\ge\kappa_R$ for
$z\in R$.  This also excludes a loss of the root chart through a simplex
face before the end of the strong-existence interval.  If an initial root
has equal minimum and maximum, it is constant by
\Cref{thm:root-rectangle}; for the coefficient estimates one may enclose the
image in a slightly larger nondegenerate compact root rectangle.
\end{proof}

\subsection{Affine moment coordinates and entropy symmetrization}
\label{sec:moments}
Following \cite{BotheAdditive2026}, we use roots for scalar localization and fixed affine moments for system regularity.  The original concentration variables already form a closed entropy-symmetrized system.

For $\ell=1,\ldots,N-1$ define
\begin{equation}\label{eq:core-moments}
 U_\ell:=\sum_{i=1}^N\delta_i^\ell x_i,
 \qquad U_0:=1.
\end{equation}
Let
\[
 L_i(\xi):=\prod_{m\ne i}\frac{\xi-\delta_m}{\delta_i-\delta_m}
 =\sum_{\ell=0}^{N-1}\ell_{i\ell}\xi^\ell
\]
be the Lagrange basis polynomial associated with $\delta_i$.

\begin{proposition}[Affine moment bijection]
\label{prop:core-moment-bijection}
The map $x\mapsto U=(U_1,\ldots,U_{N-1})$ is an affine bijection from the
concentration simplex onto an affine simplex $\mathcal T$.  Its inverse is
\begin{equation}\label{eq:core-moment-inverse}
 x_i(U)=\sum_{\ell=0}^{N-1}\ell_{i\ell}U_\ell.
\end{equation}
In reduced concentration coordinates $u=(x_1,\ldots,x_{N-1})$, its constant
derivative is the invertible matrix
\begin{equation}\label{eq:core-moment-L}
 \mathsf L_{\ell i}=\delta_i^\ell-\delta_N^\ell,
 \qquad 1\le\ell,i\le N-1.
\end{equation}
We write $\mathsf R:=\mathsf L^{-1}$.
\end{proposition}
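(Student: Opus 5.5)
The map is linear in $x$, so the plan is to invert it with a Vandermonde system written in the Lagrange basis, check the inverse on the affine hyperplane $\sum_i x_i=1$, and then read off the reduced derivative.

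\textbf{Step 1 (inverse formula).} On the hyperplane $\sum_i x_i=1$ we have $U_\ell=\sum_i\delta_i^\ell x_i$ for every $\ell=0,\ldots,N-1$, where the case $\ell=0$ is the constraint $U_0=1$. So the full vector $(U_0,\ldots,U_{N-1})$ is $V x$, with $V_{\ell i}=\delta_i^\ell$ the Vandermonde matrix. Since the poles $\delta_i$ are distinct, $V$ is invertible. Its inverse is given by the Lagrange coefficients, because
\[
 \sum_{\ell}\ell_{i\ell}\delta_k^\ell=L_i(\delta_k)=\delta_{ik}.
\]
This reads $\Lambda V=I$ with $\Lambda=(\ell_{i\ell})$, hence $\Lambda=V^{-1}$, and applying $\Lambda$ to $(U_0,\ldots,U_{N-1})$ gives \eqref{eq:core-moment-inverse}.

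\textbf{Step 2 (image is an affine simplex).} Restricted to the affine hyperplane $\{\sum_i x_i=1\}$, the map $x\mapsto U$ is an affine injection into $\R^{N-1}$: if two points have the same $U$, then with $U_0=1$ they have the same $Vx$, and $V$ is invertible. An affine bijection between $(N-1)$-dimensional affine spaces maps the simplex $\cS$ onto the simplex $\mathcal T$. Its vertices are $(\delta_i,\delta_i^2,\ldots,\delta_i^{N-1})$, the images of the unit vectors $e_i$, and they are affinely independent. The interior $\cS^\circ$ goes onto the interior of $\mathcal T$.

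\textbf{Step 3 (reduced derivative).} Substituting $x_N=1-\sum_{i<N}x_i$ gives
\[
 U_\ell=\delta_N^\ell+\sum_{i=1}^{N-1}\bigl(\delta_i^\ell-\delta_N^\ell\bigr)u_i .
\]
Hence $DU/Du=\mathsf L$ is constant, with entries as in \eqref{eq:core-moment-L}. Invertibility of $\mathsf L$ follows from Step 2, since the reduced map is the composition of the affine bijection $u\mapsto x$ with $x\mapsto U$. A direct check is also available: subtracting the last column of $V$ from the others and expanding along the $\ell=0$ row gives $\det V=\pm\det\mathsf L$, and $\det V\neq0$. Thus $\mathsf R=\mathsf L^{-1}$ is well defined.

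No real obstacle is expected. The only point needing care is the bookkeeping that the $\ell=0$ row of $V$ encodes the simplex constraint, which is what makes the Lagrange formula invert the map on the hyperplane rather than on all of $\R^N$.
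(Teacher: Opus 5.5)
Your proposal is correct and follows the paper's own route. The paper inverts the Vandermonde system with the Lagrange basis, phrased as $\sum_i x_i p(\delta_i)=\sum_\ell p_\ell U_\ell$ with $p=L_i$, gets bijectivity from the distinct poles, and reads off $\mathsf L$ by eliminating $x_N$; your explicit check $\det V=\pm\det\mathsf L$ is a harmless addition that the paper leaves implicit in the bijection.
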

\begin{proof}
For every polynomial $p(\xi)=\sum_{\ell=0}^{N-1}p_\ell\xi^\ell$ of degree at
most $N-1$,
\[
 \sum_i x_ip(\delta_i)=\sum_{\ell=0}^{N-1}p_\ell U_\ell.
\]
Taking $p=L_i$ gives \eqref{eq:core-moment-inverse}.  The distinctness of the
$\delta_i$ makes the Vandermonde matrix invertible, hence the affine map is a
bijection.  Since
$U_\ell=\delta_N^\ell+\sum_{i<N}(\delta_i^\ell-\delta_N^\ell)u_i$,
its derivative is \eqref{eq:core-moment-L}.
\end{proof}

\begin{proposition}[Closed moment system and entropy symmetrizer]
\label{prop:core-moment-system}
Let $U$ be the moment vector of a sufficiently regular solution of
\eqref{eq:reduced-system}.  Then
\begin{equation}\label{eq:core-moment-system}
 U_t-\Div\bigl(A_M(U)\nabla U\bigr)=0,
 \qquad
 A_M(U):=\mathsf L C(u(U))\mathsf R,
\end{equation}
with homogeneous Neumann data.  The matrix $A_M$ extends $C^3$-smoothly to a
neighborhood of the closed moment simplex, is similar to $C$, and therefore
has the same physical spectrum.  If
\begin{equation}\label{eq:core-moment-entropy}
 h_M(U):=h(u(U)),
 \qquad
 H_M(U):=D_U^2h_M(U)=\mathsf R^{\sf T}H(u(U))\mathsf R,
\end{equation}
then
\begin{equation}\label{eq:core-moment-symmetrizer}
 H_M(U)A_M(U)
 =\mathsf R^{\sf T}H(u(U))C(u(U))\mathsf R
\end{equation}
is symmetric positive definite on the open moment simplex.
\end{proposition}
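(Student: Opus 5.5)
The plan is to obtain all four claims from the constant affine change of variables $u\mapsto U=\mathsf L u+c$ of \Cref{prop:core-moment-bijection}, with $c_\ell=\delta_N^\ell$ and $\mathsf L$ constant and invertible. I will check the equation, the boundary condition, the smoothness and spectrum of $A_M$, and the symmetrizer in that order.

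For the equation, note that $\mathsf L$ is constant, so $U_t=\mathsf L u_t$ and $\nabla U=\mathsf L\nabla u$, hence $\nabla u=\mathsf R\nabla U$. Applying $\mathsf L$ to \eqref{eq:reduced-system} and pulling the constant matrix through the divergence gives
\[
 U_t=\Div\bigl(\mathsf L C(u)\nabla u\bigr)=\Div\bigl(\mathsf L C(u(U))\mathsf R\nabla U\bigr),
\]
which is \eqref{eq:core-moment-system}. In the same way, $\partial_\nu U=\mathsf L\partial_\nu u=0$ gives the homogeneous Neumann condition.

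For regularity, the inverse $u(U)=\mathsf R(U-c)$ is affine and therefore smooth on all of $\R^{N-1}$. Hence $A_M=\mathsf L\,C(\mathsf R(U-c))\,\mathsf R$ inherits the $C^3$ regularity of $C$. The map $C$ is $C^3$ on the Euclidean neighborhood $\mathcal U$ of the closed reduced simplex, because the $d_i$ are $C^3$ there; in the affine class this follows from $\sigma,\tau\in C^3$. The preimage of $\mathcal U$ under the affine map is a neighborhood of the closed moment simplex $\mathcal T$, so $A_M$ extends $C^3$-smoothly to that neighborhood. Since $\mathsf R=\mathsf L^{-1}$, the matrix $A_M=\mathsf L C\mathsf L^{-1}$ is similar to $C$. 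It therefore has the same spectrum, and \Cref{lem:spectrum-bounds} gives the inclusion in $[\min_i d_i,\max_i d_i]$.

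For the symmetrizer, the chain rule for the composition of $h$ with an affine map gives $D_U^2 h_M=\mathsf R^{\sf T}H\mathsf R$, because the second derivative of the affine map vanishes. Then
\[
 H_MA_M=\mathsf R^{\sf T}H\mathsf R\,\mathsf L C\mathsf R=\mathsf R^{\sf T}HC\mathsf R,
\]
which is \eqref{eq:core-moment-symmetrizer}. \Cref{prop:HC} shows that $HC$ is symmetric positive definite on the open reduced simplex, and congruence by the invertible matrix $\mathsf R$ preserves both properties. The open moment simplex is exactly the image of the open reduced simplex, so the claim holds there.

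There is no genuine obstacle. The only point needing care is the bookkeeping that $\mathsf L$ and $\mathsf R$ act on the constant affine part, so the divergence commutes with them. I would also record that "sufficiently regular" means the strong class of \Cref{thm:local}, so that the chain rule applies.
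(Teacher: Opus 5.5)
Your proposal is correct and follows the paper's proof essentially step for step. You apply the constant matrix $\mathsf L$ to the reduced system, use the affine inverse for the $C^3$ extension and similarity for the spectrum, use the affine Hessian transformation for $H_M$, and use congruence by $\mathsf R$ together with \Cref{prop:HC} for symmetry and positivity; your explicit checks of the Neumann trace and of the neighborhood preimage fill in details the paper leaves implicit.
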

\begin{proof}
Because $\mathsf L$ is constant, applying it to
\eqref{eq:reduced-system} and using $\nabla U=\mathsf L\nabla u$ gives
\eqref{eq:core-moment-system}.  The affine inverse map and the $C^3$ regularity of $C$
give the closed-simplex extension, while similarity gives the spectral
statement.  Formula \eqref{eq:core-moment-entropy} is the ordinary Hessian
transformation under the affine map $u=u(U)$.  Multiplying by $A_M$ gives
\eqref{eq:core-moment-symmetrizer}, and Proposition~\ref{prop:HC} gives
symmetry and positivity.
\end{proof}

\begin{remark}[Roles of root and moment coordinates]
\label{rem:core-root-moment-duality}
The root and moment coordinates represent the same fixed compressed diagonal
operator in different forms.  The roots are the characteristic values of the
compression and yield a diagonal scalar principal part; the moments are fixed
affine coordinates and preserve divergence structure without introducing a
state-dependent coordinate connection.  This is the same separation of scalar and system variables used
in \cite[Sections~4--8]{BotheAdditive2026}.
\end{remark}

\section{Entropy-stabilized EPD localization and De Giorgi regularity}
\label{sec:epd-dg-block}
\subsection{EPD production and one-sided truncations}
\label{sec:epd}
This section applies the localization method of \cite[Sections~11--12]{BotheAdditive2026}.  We verify the coefficient-dependent entropy identities and derive the quadratic, logarithmic, and oscillation estimates.  By \Cref{thm:root-system}, the additive-friction and affine Darken problems have the same fixed-Stieltjes Cauchy form
\[
 z_{j,t}-\Div(\mathfrak a_j(z)\nabla z_j)
 +\sum_{k\ne j}\frac{\mathfrak a_j+\mathfrak a_k}{z_j-z_k}\nabla z_j\cdot\nabla z_k=0.
\]
The EPD cancellation uses this divergence system and the Stieltjes connection identities.  The one-sided coercivity also requires uniform positive bounds for $\mathfrak a_j$, separated root coordinates, and bounded entropy coefficients.  The logarithmic estimate uses the same bounds and the weighted transverse dissipation supplied by stabilization.  The chain rule introduces no selected-root derivative of $\mathfrak a_j$; the condition $\partial_{z_j}\mathfrak a_j=0$ is therefore unnecessary.

The production identity and coercive stabilization yield the Caccioppoli
and logarithmic estimates.  These imply propagation of a level-set
fraction, a critical-mass estimate, shrinking of superlevel sets, and
oscillation decay.  All constants depend on the structural data specified below.

\paragraph{Structural data for the scalar estimates.}
For a compact root rectangle $R\Subset\cZ$, write
\[
 \mathcal D_R:=\bigl(d,N,\Omega,R,\delta,d_*,d^*,
 \|\sigma\circ x\|_{C^2(R)},\|\tau\circ x\|_{C^2(R)}\bigr),
\]
where $x=x(z)$ is the inverse Stieltjes map and the domain entry denotes the uniform local geometric bounds used below.  Throughout this section, constants denoted by $c_R,C_R,\varepsilon_R$, and constants called \emph{structural}, may depend on all of $\mathcal D_R$, not merely on $R$.  Additional parameters, such as a waiting-time gap or a prescribed level-set fraction, are specified where they occur.  In particular, uniformity over constitutive laws requires a common positive ellipticity lower bound $d_*$.

The scalar iteration applies to a strong solution of the displayed
fixed-Stieltjes divergence system with range in $R$, homogeneous Neumann
trace, uniformly positive bounded modal coefficients, and bounded Cauchy
coefficients.  The constitutive formula verifies these bounds.  The
quadratic and logarithmic estimates below retain the dependence on
$\mathcal D_R$.

\begin{lemma}[Coefficient bounds on a root rectangle]\label{lem:coefficient-ledger}
Let $R=\prod_{j=1}^{N-1}[a_j,b_j]\Subset\cZ$.  Then there are constants $0<c_R<C_R<\infty$ such that, for every $z\in R$ and all admissible indices,
\begin{align*}
 c_R&\le \alpha_j(z)\le C_R, &
 |D^\ell\alpha_j(z)|&\le C_R\quad(|\ell|\le2),\\
 |z_j-z_k|^{-1}&\le C_R\quad(j\ne k), &
 \left|D^\ell\frac{\alpha_j+\alpha_k}{z_j-z_k}\right|&\le C_R\quad(|\ell|\le1),\\
 c_R&\le m_x'(z_j)\le C_R.
\end{align*}
The constants depend only on the fixed root rectangle, the species parameters $\delta_i$, the positive lower and upper bounds $d_*,d^*$ in \eqref{eq:darken-uniform-bounds}, and finite $C^2$ bounds for $\sigma$ and $\tau$ on the corresponding compact concentration range.  Equivalently, they are controlled by $\mathcal D_R$.
\end{lemma}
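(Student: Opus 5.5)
The plan is to obtain each bound from compactness of $R$ together with smoothness of the inverse Stieltjes map. Positivity will not be read off from compactness alone; it comes from the convex-combination formula \eqref{eq:alpha-convex-combination}.

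First, by \Cref{prop:root-chart} the inverse map $z\mapsto x(z)$ in \eqref{eq:inverse-root} is a polynomial, hence real-analytic. Since $R\Subset\cZ$, its image $x(R)$ is a compact subset of $\cS^\circ$; indeed \Cref{cor:species-separation} gives $x_i(z)\ge\kappa_R>0$ there.

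\textbf{Modal coefficients.} Set $\alpha_j(z)=\sigma(x(z))+\tau(x(z))z_j$. By \eqref{eq:alpha-convex-combination}, $\alpha_j$ is a convex combination of $d_j$ and $d_{j+1}$ evaluated at $x(z)$. Hence \eqref{eq:darken-uniform-bounds} gives $d_*\le\alpha_j\le d^*$, and I will take $c_R\le d_*$ and $C_R\ge d^*$.

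\textbf{Derivatives of $\alpha_j$.} For $|\ell|\le2$ I will use the chain and product rules. These express $D^\ell\alpha_j$ through derivatives up to order two of $\sigma\circ x$ and $\tau\circ x$ on $R$, which are exactly the quantities listed in $\mathcal D_R$. They are multiplied by $z_j$ and its first derivative, both bounded by $\delta_N$ and $1$. If one prefers to count $C^2$ norms of $\sigma,\tau$ on the compact set $x(R)$, one needs bounds on $D^\ell x(z)$ for $|\ell|\le2$. Those follow from the explicit polynomial \eqref{eq:inverse-root}, whose coefficients depend only on $\delta$ and $R$.

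\textbf{Root gaps and Cauchy coefficients.} For $j<k$, the interlacing \eqref{eq:interlace} together with $R=\prod[a_j,b_j]$ gives $z_k-z_j\ge a_k-b_j\ge a_{j+1}-b_j>\delta_{j+1}-\delta_{j+1}=0$. More precisely, $a_{j+1}>\delta_{j+1}>b_j$, so $|z_j-z_k|\ge\min_j(a_{j+1}-b_j)>0$. The Cauchy quotient $(\alpha_j+\alpha_k)/(z_j-z_k)$ is then a product of the $C^1$-bounded numerator with the smooth function $1/(z_j-z_k)$. The latter has $C^1$ bounds depending only on this gap, and the quotient rule gives the first-derivative bound.

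\textbf{Stieltjes derivative.} We have $m_x'(z_j)=\sum_ix_i/(\delta_i-z_j)^2$. For the upper bound, each summand is at most $x_i/\operatorname{dist}(\delta_i,[a_j,b_j])^2$, and $\sum x_i=1$. For the lower bound, $m_x'(z_j)\ge\sum_i x_i/(\delta_N-\delta_1)^2=(\delta_N-\delta_1)^{-2}$, since $|\delta_i-z_j|\le\delta_N-\delta_1$. This lower bound does not even need species separation.

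The only delicate point is bookkeeping: every constant must depend only on the listed data and never on the solution. The step most likely to need care is the derivative bound for $\alpha_j$, where one has to confirm that $C^2$ control of $\sigma\circ x,\tau\circ x$ on $R$ suffices. Since $\alpha_j$ is affine in $z_j$ with coefficients $\sigma\circ x$ and $\tau\circ x$, it does. Everything else is a continuous function on a compact set with an explicit positive lower bound.
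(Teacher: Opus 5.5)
Your proposal is correct and follows the paper's argument. Positivity of $\alpha_j$ comes from the convex-combination formula \eqref{eq:alpha-convex-combination}, the derivative and Cauchy-coefficient bounds come from smoothness of the inverse Stieltjes map together with root-gap separation on $R$, and $m_x'(z_j)$ is bounded above and below on the compact set $R$; your explicit constants, such as the gap $\min_j(a_{j+1}-b_j)$ and the lower bound $(\delta_N-\delta_1)^{-2}$ for $m_x'(z_j)$, just make the paper's compactness step quantitative.
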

\begin{proof}
The root gaps are uniformly separated on $R$.  The inverse Stieltjes map $x(z)$ is smooth on $R$, while $\sigma$ and $\tau$ are $C^3$; hence \eqref{eq:alpha} gives the stated derivative bounds.  The positivity bounds follow from \eqref{eq:alpha-bounds}.  The Cauchy-coefficient bounds are then immediate.  Finally
\[
 m_x'(z_j)=\sum_i\frac{x_i(z)}{(\delta_i-z_j)^2}
\]
is positive and smooth on $R$, so compactness gives its two-sided bound.
\end{proof}

For a scalar state function $\Psi(z)$ define
\begin{align}
 \mathfrak Q_\Psi(z;\Xi)
 :={}&\sum_j\alpha_j\Psi_{jj}|\Xi_j|^2\notag\\
 &+\sum_{j<k}(\alpha_j+\alpha_k)
 \left(\Psi_{jk}+\frac{\Psi_j-\Psi_k}{z_j-z_k}\right)
 \Xi_j\cdot\Xi_k.
 \label{eq:Qpsi}
\end{align}

\begin{lemma}[Strong root chain rule]
\label{lem:chain}
For a maximal-$L^p$ strong root solution with compact root range and every $\Psi\in C^2$,
\begin{equation}
 \partial_t\Psi(z)
 -\Div\left(\sum_j\alpha_j(z)\Psi_j(z)\nabla z_j\right)
 +\mathfrak Q_\Psi(z;\nabla z)=0
 \label{eq:chain}
\end{equation}
in distributions.  The same formula holds for $\Psi=\mathbf1_{\{z_i>k\}}F$ or $\mathbf1_{\{z_i<k\}}F$ when $F$ has bounded classical second derivatives on a neighborhood of the closed active branch and $F=DF=0$ on $\{z_i=k\}$.  Its zero extension is then $C^{1,1}$ and no hypersurface measure occurs on the target interface.
\end{lemma}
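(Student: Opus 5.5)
The plan is to derive \eqref{eq:chain} first for smooth $\Psi$ by a pointwise computation, and then obtain the truncated case by approximation.

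\textbf{Smooth $\Psi$.} By \Cref{lem:root-strong-transfer}, on every compact positive-time strip we have
\[
 z\in W^{1,p}(I;L^p)\cap L^p(I;W^{2,p}),
\]
and $z,\nabla z$ are bounded and continuous. The Sobolev chain rule therefore gives
\[
 \partial_t\Psi(z)=\sum_j\Psi_j\,\partial_tz_j
\]
almost everywhere. Each $\partial_tz_j$ is then replaced using \eqref{eq:root-system}:
\[
 \partial_tz_j=\Div(\alpha_j\nabla z_j)-\sum_{k\ne j}\frac{\alpha_j+\alpha_k}{z_j-z_k}\nabla z_j\cdot\nabla z_k .
\]
For the divergence term I use the product rule in $W^{1,p}$, which is valid because $\alpha_j(z)\nabla z_j\in W^{1,p}$ and $\Psi_j(z)\in W^{1,\infty}$:
\[
 \Psi_j\Div(\alpha_j\nabla z_j)=\Div(\Psi_j\alpha_j\nabla z_j)-\alpha_j\nabla z_j\cdot\sum_k\Psi_{jk}\nabla z_k .
\]
This step never differentiates $\alpha_j$, so no term $\partial_{z_j}\alpha_j$ appears, in line with \Cref{rem:root-system-additive-parallel}. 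Summing over $j$, the Hessian contributions give
\[
 \sum_j\alpha_j\Psi_{jj}|\nabla z_j|^2+\sum_{j<k}(\alpha_j+\alpha_k)\Psi_{jk}\nabla z_j\cdot\nabla z_k ,
\]
using symmetry of $\Psi_{jk}$. The Cauchy terms combine in pairs $j<k$ to
\[
 \frac{\alpha_j+\alpha_k}{z_j-z_k}\,(\Psi_j-\Psi_k)\,\nabla z_j\cdot\nabla z_k ,
\]
since $z_k-z_j=-(z_j-z_k)$. Together these are exactly $\mathfrak Q_\Psi$ from \eqref{eq:Qpsi}. The identity then holds almost everywhere with every term in $L^1_{\rm loc}$, and hence in distributions. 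For test functions not vanishing on $\partial\Omega$ the boundary flux is $\sum_j\alpha_j\Psi_j\partial_\nu z_j=0$.

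\textbf{Truncated $\Psi$.} Let $\Psi=\mathbf1_{\{z_i>k\}}F$. The assumptions $F=DF=0$ on $\{z_i=k\}$ and $D^2F$ bounded near the closed branch imply that the zero extension $\widetilde\Psi$ has a Lipschitz gradient. Indeed, $D\widetilde\Psi$ is continuous across the interface, since it is zero there, and is Lipschitz on each side. Hence $\widetilde\Psi\in C^{1,1}$ on a neighborhood of the compact root range, with $D^2\widetilde\Psi=\mathbf1_{\{z_i>k\}}D^2F$ almost everywhere.

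I then mollify, setting $\Psi^\varepsilon=\widetilde\Psi*\rho_\varepsilon$. This gives
\[
 \Psi^\varepsilon\to\widetilde\Psi\ \text{in } C^1,\qquad D^2\Psi^\varepsilon\text{ uniformly bounded},\qquad D^2\Psi^\varepsilon\to D^2\widetilde\Psi\ \text{a.e.}
\]
in the state variable. Applying the smooth identity to $\Psi^\varepsilon$, the time-derivative and flux terms converge by uniform $C^1$ convergence together with boundedness of $\alpha_j$ and $\nabla z$. The $\Psi_j-\Psi_k$ Cauchy terms converge for the same reason.

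\textbf{Main obstacle.} The delicate step is the Hessian part of $\mathfrak Q_{\Psi^\varepsilon}(z;\nabla z)$. Here $D^2\Psi^\varepsilon(z(t,y))$ converges only at points where $z(t,y)$ avoids the null set on which $D^2\widetilde\Psi$ fails to be the limit, and that set may contain the interface $\{z_i=k\}$.

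To handle this, I split the space-time domain. Off the preimage $\{z_i=k\}$, and away from other exceptional points, convergence holds at every point, because $\widetilde\Psi$ is $C^2$ near any state off the interface. On the level set $\{z_i=k\}$ in space-time, Stampacchia's lemma gives $\nabla z_i=0$ almost everywhere. In $\mathfrak Q$ the only second derivatives that can jump across the interface are those involving the index $i$. I would argue that $D^2\widetilde\Psi$ restricted to directions tangent to $\{z_i=k\}$ is zero on both sides, since $\widetilde\Psi=0$ on the interface and $F=DF=0$ there, so those tangential entries vanish in the limit. The entries involving index $i$ are multiplied by $\nabla z_i$, which is zero there. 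Consequently the integrand on $\{z_i=k\}$ tends to zero, matching the value of the zero extension.

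Dominated convergence, using the uniform bound on $D^2\Psi^\varepsilon$ and the bounded gradients, then passes to the limit. The limiting identity has no interface measure, because $\widetilde\Psi$ is $C^{1,1}$ and no singular part of $D^2\widetilde\Psi$ appears. The lower branch $\mathbf1_{\{z_i<k\}}F$ is handled identically.
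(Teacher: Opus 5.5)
Your proposal is correct and follows the paper's proof. It uses the same divergence-form computation for smooth $\Psi$: multiply the $j$th root equation by $\Psi_j(z)$, apply the product rule without differentiating $\alpha_j$, and pair the Cauchy terms into $(\alpha_j+\alpha_k)(\Psi_j-\Psi_k)/(z_j-z_k)$. It then uses the $C^{1,1}$ zero extension and an approximation argument, with your mollification and Stampacchia step on $\{z_i=k\}$ spelling out what the paper compresses into ``approximation by smooth $C^{1,1}$ functions, or directly the second-order Sobolev chain rule''. One small point: the vanishing of the tangential Hessian entries in the limit relies on continuity of $D^2F$ up to the interface, where $DF\equiv0$ forces $F_{jl}=0$ for $j,l\ne i$; this holds for the smooth potentials to which the lemma is applied.
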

\begin{proof}
For smooth $\Psi$, multiply the $j$th equation in \eqref{eq:root-system} by $\Psi_j(z)$ and sum over $j$.  The temporal terms give $\partial_t\Psi(z)$.  For the principal part,
\[
 -\sum_j\Psi_j\Div(\alpha_j\nabla z_j)
 =-\Div\!\left(\sum_j\alpha_j\Psi_j\nabla z_j\right)
 +\sum_{j,k}\alpha_j\Psi_{jk}\nabla z_j\cdot\nabla z_k.
\]
Pairing $(j,k)$ and $(k,j)$ in the last sum and adding the Cauchy couplings from \eqref{eq:root-system} yields exactly
\[
 \sum_j\alpha_j\Psi_{jj}|\nabla z_j|^2
 +\sum_{j<k}(\alpha_j+\alpha_k)
 \left(\Psi_{jk}+\frac{\Psi_j-\Psi_k}{z_j-z_k}\right)
 \nabla z_j\cdot\nabla z_k.
\]
This is \eqref{eq:Qpsi}.  The calculation remains in divergence form; no derivative of $\alpha_j$ is generated.

The regularity in \Cref{lem:root-strong-transfer} justifies these operations by the Sobolev chain and product rules.  For a one-sided branch, extend the active-side function by zero.  The assumptions $F=DF=0$ on $\{z_i=k\}$ imply that the extension is $C^{1,1}$ and its weak Hessian has no singular hypersurface part.  Approximation by smooth $C^{1,1}$ functions, or directly the second-order Sobolev chain rule, therefore gives the same identity almost everywhere on the active side and zero production on the inactive side.  This one-sided extension follows \cite[Lemma~11.1]{BotheAdditive2026}.
\end{proof}

The EPD compatibility equation is
\begin{equation}
 (z_k-z_j)\Psi_{jk}=\Psi_j-\Psi_k,
 \qquad j\ne k.
 \label{eq:EPD}
\end{equation}
It cancels every mixed term in \eqref{eq:Qpsi}.

\subsubsection{Mixing entropy as an exact EPD entropy}
Let $h_z(z)=h(x(z))$.  From \eqref{eq:qj},
\begin{equation}
 (h_z)_{jj}
 =\sum_i\frac{x_i}{(\delta_i-z_j)^2}
 =m_x'(z_j)>0.
 \label{eq:hjj}
\end{equation}
For $j\ne k$,
\[
 \sum_i\frac{x_i}{(\delta_i-z_j)(\delta_i-z_k)}=0,
\]
which is the divided difference of $m_x$ at two of its zeros.  Together with \eqref{eq:mixed-connection}, this gives \eqref{eq:EPD} for $h_z$.  Hence
\begin{equation}
 \mathfrak Q_{h_z}(z;\Xi)
 =\sum_j\alpha_j(z)m_x'(z_j)|\Xi_j|^2.
 \label{eq:root-entropy-coercive}
\end{equation}
On a compact root rectangle this is uniformly equivalent to $\sum_j|\Xi_j|^2$.

\subsubsection{One-sided EPD potentials and entropy stabilization}
Fix $i\in\{1,\ldots,m\}$ and a level $k\in[a_i,b_i]$.  Define
\begin{equation}
 \Phi_{i,k}(z)
 :=(-1)^{i-1}\int_k^{z_i}(z_i-\sigma)
 \prod_{j\ne i}(z_j-\sigma)\,\dd\sigma.
 \label{eq:Phi}
\end{equation}
For the upper and lower branches set
\begin{equation}
 \Phi_{i,k}^+(z):=\mathbf1_{\{z_i>k\}}\Phi_{i,k}(z),
 \qquad
 \Phi_{i,k}^-(z):=\mathbf1_{\{z_i<k\}}\Phi_{i,k}(z).
 \label{eq:Phi-branches}
\end{equation}
Because both $\Phi_{i,k}$ and its first derivatives vanish on $z_i=k$, the
one-sided branches belong to $C^{1,1}(R)$, their weak Hessians are the
classical Hessians on the active side and zero on the inactive side, and no
hypersurface measure is created at the target interface.  Thus the direct
one-sided chain rule in Lemma~\ref{lem:chain} applies without smoothing the
potential and therefore without destroying the EPD identity.

\begin{lemma}[Exact EPD identities and truncation bounds]
\label{lem:multi-EPD}
Let $R=\prod_j[a_j,b_j]\Subset\cZ$ be a compact root rectangle.  There are
constants $0<c_R\le C_R<\infty$ such that, on the active side of either
branch, with
\[
 r=(z_i-k)_+\quad\hbox{or}\quad r=(k-z_i)_+,
\]
one has
\begin{align}
 c_Rr^2&\le \Phi_{i,k}^{\pm}(z)\le C_Rr^2,
 \label{eq:Phi-size}\\
 |\partial_i\Phi_{i,k}|&\le C_Rr,
 \qquad |\partial_j\Phi_{i,k}|\le C_Rr^2\quad(j\ne i),
 \label{eq:Phi-der}\\
 \partial_{ii}\Phi_{i,k}
 &=\prod_{j\ne i}|z_j-z_i|\ge c_R,
 \qquad \partial_{jj}\Phi_{i,k}=0\quad(j\ne i),
 \label{eq:Phi-hess}
\end{align}
and, for every $a\ne b$,
\begin{equation}
 (z_b-z_a)\partial_{ab}\Phi_{i,k}
 =\partial_a\Phi_{i,k}-\partial_b\Phi_{i,k}.
 \label{eq:Phi-EPD}
\end{equation}
Consequently
\begin{equation}
 \mathfrak Q_{\Phi_{i,k}^{\pm}}(z;\nabla z)
 =\alpha_i(z)\partial_{ii}\Phi_{i,k}(z)|\nabla r|^2
 \ge c_R|\nabla r|^2
 \label{eq:pure-coercive}
\end{equation}
almost everywhere.
\end{lemma}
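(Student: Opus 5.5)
The plan is to compute everything from the explicit one-variable representation of $\Phi_{i,k}$. Put
\[
 P_i(\sigma;z):=(z_i-\sigma)\prod_{j\ne i}(z_j-\sigma),
\]
so that $\Phi_{i,k}=(-1)^{i-1}\int_k^{z_i}P_i(\sigma;z)\dd\sigma$. The other roots $z_j$ enter only as parameters, each to first order, so $\Phi_{i,k}$ is affine in every $z_j$ with $j\ne i$. This immediately gives $\partial_{jj}\Phi_{i,k}=0$ for $j\ne i$.

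\textbf{Derivatives.} Differentiating in $z_i$ kills the boundary term, since the integrand vanishes at $\sigma=z_i$. We obtain
\[
 \partial_i\Phi=(-1)^{i-1}\int_k^{z_i}\prod_{j\ne i}(z_j-\sigma)\dd\sigma,
 \qquad
 \partial_{ii}\Phi=(-1)^{i-1}\prod_{j\ne i}(z_j-z_i).
\]
For $\sigma$ near $z_i$, the sign of $\prod_{j\ne i}(z_j-\sigma)$ is $(-1)^{i-1}$, because exactly $i-1$ roots lie below $z_i$. Hence $\partial_{ii}\Phi=\prod_{j\ne i}|z_j-z_i|\ge c_R$ by root-gap separation on $R$.

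The interval between $k$ and $z_i$ lies in $[a_i,b_i]$, which stays a positive distance from every other $[a_j,b_j]$. So the integrand has constant sign $(-1)^{i-1}$ and modulus comparable to $1$ there. This yields $|\partial_i\Phi|\le C_Rr$. It also yields $c_Rr^2\le\Phi\le C_Rr^2$, because $(-1)^{i-1}P_i(\sigma)=|z_i-\sigma|\prod_{j\ne i}|z_j-\sigma|$ up to the orientation of $\int_k^{z_i}$, which is handled separately on the upper and lower branches. For $j\ne i$,
\[
 \partial_j\Phi=(-1)^{i-1}\int_k^{z_i}(z_i-\sigma)\prod_{l\ne i,j}(z_l-\sigma)\dd\sigma,
\]
which is $O(r^2)$ because the factor $(z_i-\sigma)$ is $O(r)$ on an interval of length $r$.

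\textbf{EPD identity.} I would verify \eqref{eq:Phi-EPD} at the level of integrands. Write $P=\prod_{l}(z_l-\sigma)$ over all roots (including $z_i$), so $\Phi=(-1)^{i-1}\int_k^{z_i}P\dd\sigma$. For $a,b\ne i$ the integrands of $\partial_a\Phi$ and $\partial_b\Phi$ are $P/(z_a-\sigma)$ and $P/(z_b-\sigma)$. Their difference is
\[
 \frac{P}{z_a-\sigma}-\frac{P}{z_b-\sigma}
 =(z_b-z_a)\frac{P}{(z_a-\sigma)(z_b-\sigma)},
\]
and the last quotient is exactly the integrand of $\partial_{ab}\Phi$. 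For the pair $a=i$, $b\ne i$, the extra boundary contributions from $\partial_{z_i}$ must be tracked: $\partial_i\Phi$ has integrand $P/(z_i-\sigma)$ with no boundary term, since $P(z_i)=0$. Differentiating this once more in $z_b$ gives the boundary term $P/(z_i-\sigma)$ evaluated at $\sigma=z_i$, namely $\prod_{l\ne i,b}(z_l-z_i)\cdot(z_b-z_i)$. Checking that the remaining boundary terms cancel in the divided-difference identity is the one delicate algebraic step. I expect it to close because $\prod_{l\ne i}(z_l-\sigma)/(z_b-\sigma)$ evaluated at $\sigma=z_i$ matches the term produced by $\partial_b\partial_i$. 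This is the main bookkeeping obstacle, and I would handle it by writing both sides as integrals plus explicit evaluations at $\sigma=z_i$.

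\textbf{Coercivity.} With \eqref{eq:Phi-EPD} established, every mixed bracket in \eqref{eq:Qpsi} vanishes. Since $\partial_{jj}\Phi=0$ for $j\ne i$, only $\alpha_i\partial_{ii}\Phi|\nabla z_i|^2$ survives on the active side. On the active side $\nabla z_i=\pm\nabla r$, and on the inactive side the branch is identically zero, as justified by the $C^{1,1}$ extension in \Cref{lem:chain}. Combining $\partial_{ii}\Phi\ge c_R$ with $\alpha_i\ge c_R$ from \Cref{lem:coefficient-ledger} gives \eqref{eq:pure-coercive} almost everywhere.
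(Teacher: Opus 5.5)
\emph{Verdict.} Your approach is the same as the paper's, but the case $a=i$, $b\ne i$ of \eqref{eq:Phi-EPD} is left unproved, and the computation you sketch for it is wrong.

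\emph{What is correct.} You use the paper's ingredients: the one-variable integral representation of $\Phi_{i,k}$, differentiation under the integral, affineness in the transverse roots, the interlacing sign $(-1)^{i-1}$ on $[a_i,b_i]$, and a partial-fraction identity for the mixed derivatives. The size, first-derivative and Hessian bounds are correct. So are the transverse case $a,b\ne i$ of \eqref{eq:Phi-EPD} and the final coercivity step.

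\emph{The gap.} Your bookkeeping for the case $a=i$ does not work. Differentiating $\partial_i\Phi=(-1)^{i-1}\int_k^{z_i}P/(z_i-\sigma)\dd\sigma$ in $z_b$ produces no boundary term, because neither limit of integration depends on $z_b$. In the other order, differentiating $\partial_b\Phi=(-1)^{i-1}\int_k^{z_i}P/(z_b-\sigma)\dd\sigma$ in $z_i$ produces the value of $P/(z_b-\sigma)$ at $\sigma=z_i$. That value is zero, because $P$ still contains the factor $z_i-\sigma$. The quantity $(-1)^{i-1}\prod_{l\ne i}(z_l-z_i)$ that you attribute to $\partial_{ib}\Phi$ is in fact the boundary term of $\partial_{ii}\Phi$. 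If it were really present, $(z_b-z_i)\partial_{ib}\Phi$ would differ from $\partial_i\Phi-\partial_b\Phi$ by $(z_b-z_i)\partial_{ii}\Phi$. That quantity is bounded away from zero on $R$, whereas $\partial_i\Phi-\partial_b\Phi=O(r)$. So with your bookkeeping the identity would fail rather than close. In fact there are no boundary terms to cancel.

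\emph{The repair.} Every mixed second derivative, including those with $a=i$, has integrand $(-1)^{i-1}P/\bigl((z_a-\sigma)(z_b-\sigma)\bigr)$. The partial fraction you already used for $a,b\ne i$ therefore applies verbatim. Explicitly,
\[
 \partial_i\Phi-\partial_b\Phi
 =(-1)^{i-1}\int_k^{z_i}\bigl[(z_b-\sigma)-(z_i-\sigma)\bigr]
   \prod_{l\ne i,b}(z_l-\sigma)\dd\sigma
 =(z_b-z_i)\,\partial_{ib}\Phi .
\]
In the paper's notation this is the identity $1-\frac{z_i-s}{z_j-s}=\frac{z_j-z_i}{z_j-s}$, applied to the integrands of $\partial_i\Phi$ and $\partial_j\Phi$. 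With this case supplied, your argument coincides with the paper's proof.
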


\begin{proof}
Write $P_i(s)=(-1)^{i-1}\prod_{j\ne i}(z_j-s)$.  For $s$ between $k$ and
$z_i$, interlacing gives $0<c_R\le P_i(s)\le C_R$ and uniformly separates
$s$ from every transverse root.  Differentiation under the integral gives
\[
 \partial_i\Phi=\int_k^{z_i}P_i(s)\,\dd s,
 \qquad \partial_{ii}\Phi=P_i(z_i),
 \qquad
 \partial_j\Phi=\int_k^{z_i}
       (z_i-s)\frac{P_i(s)}{z_j-s}\,\dd s\quad(j\ne i).
\]
The potential is affine in each transverse root, so $\partial_{jj}\Phi=0$.
These formulas give \eqref{eq:Phi-size}--\eqref{eq:Phi-hess}; for $z_i<k$
the reversed integration orientation and the factor $z_i-s$ have opposite
signs, leaving the same positive quadratic bound.

For $a,b\ne i$, subtract the two transverse derivative formulas and use
\[
 \frac1{z_a-s}-\frac1{z_b-s}
 =\frac{z_b-z_a}{(z_a-s)(z_b-s)}.
\]
For the pair $i,j$, use instead
\[
 1-\frac{z_i-s}{z_j-s}=\frac{z_j-z_i}{z_j-s}.
\]
These are exactly the two cases of \eqref{eq:Phi-EPD}.  Thus all mixed
production vanishes, and the selected term is positive by $\alpha_i\ge d_*$.
The zero extension is justified by \Cref{lem:chain}.  This supplies the
fixed-pole calculation underlying \cite[Lemma~11.2]{BotheAdditive2026}
without any assumption on selected-root derivatives of the coefficients.
\end{proof}

If $m\ge2$, the pure EPD branch controls the selected gradient but supplies
no transverse-gradient coercivity for the fluxes generated by spatial
localization.  The physical mixing entropy provides the transverse coercivity
required to control these terms.
Since $-\log N\le h_z\le0$, choose $C_h=1+\log N$, so that
\[
 \widehat h(z):=h_z(z)+C_h\ge1\qquad(z\in R),
\]
and for either sign set
\[
 \chi:=\frac12r^2.
\]
On the active side, $\chi_i=\varsigma r$ with
$\varsigma\in\{1,-1\}$, $\chi_{ii}=1$, and all other first derivatives of
$\chi$ vanish.

\begin{lemma}[Exact product identity for the root production]
\label{lem:Q-product}
On the active side,
\begin{align}
 \mathfrak Q_{\chi\widehat h}
 ={}&\chi\mathfrak Q_{h_z}+\widehat h\,\mathfrak Q_\chi
 +2\alpha_i\chi_i(h_z)_i|\nabla z_i|^2\notag\\
 &+\chi_i\sum_{j\ne i}(\alpha_i+\alpha_j)(h_z)_j
 \nabla z_i\cdot\nabla z_j,
 \label{eq:Q-product}
\end{align}
where
\begin{equation}
 \mathfrak Q_\chi
 =\alpha_i|\nabla r|^2
 +\chi_i\sum_{j\ne i}
 \frac{\alpha_i+\alpha_j}{z_i-z_j}\,
 \nabla z_i\cdot\nabla z_j.
 \label{eq:Q-chi}
\end{equation}
\end{lemma}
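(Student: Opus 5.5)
The identity is purely algebraic: expand the definition \eqref{eq:Qpsi} of $\mathfrak Q_\Psi$ for $\Psi=\chi\widehat h$ with the product rule, then regroup. On the active side we have $\chi_i=\varsigma r$, $\chi_{ii}=1$, and $\chi_j=\chi_{jk}=0$ whenever $j\ne i$ or $k\ne i$. Since $\widehat h=h_z+C_h$ and $C_h$ is constant, $\widehat h$ has the same derivatives as $h_z$. The product rule gives
\[
 \Psi_j=\chi_j\widehat h+\chi(h_z)_j,\qquad
 \Psi_{jk}=\chi_{jk}\widehat h+\chi_j(h_z)_k+\chi_k(h_z)_j+\chi(h_z)_{jk}.
\]
Because $\mathfrak Q_\Psi$ is linear in the pair $(\Psi_{jk},\Psi_j)$, the resulting terms sort into four groups.

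The terms containing $\chi$ undifferentiated reproduce exactly $\chi\mathfrak Q_{h_z}$. The terms with $\widehat h$ multiplying derivatives of $\chi$ reproduce $\widehat h\,\mathfrak Q_\chi$. For the latter, evaluate $\mathfrak Q_\chi$ directly from \eqref{eq:Qpsi}. The diagonal contribution is $\alpha_i\chi_{ii}|\nabla z_i|^2=\alpha_i|\nabla r|^2$, using $\nabla z_i=\varsigma\nabla r$ on the active set. The mixed pairs involving $i$ contribute $(\alpha_i+\alpha_j)\,\chi_i/(z_i-z_j)$ after orienting each pair $\{i,j\}$ consistently: the divided-difference term is antisymmetric under swapping $j,k$, and so is $\Xi_j\cdot\Xi_k$ paired with the ordering, so the product is symmetric. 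Pairs not involving $i$ vanish. This gives \eqref{eq:Q-chi}.

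The remaining terms are the cross terms $\chi_j(h_z)_k+\chi_k(h_z)_j$ in $\Psi_{jk}$; the first-derivative divided-difference part has no cross terms. Diagonally, only $j=i$ contributes, giving $\alpha_i\cdot 2\chi_i(h_z)_i|\nabla z_i|^2$. For a mixed pair, only pairs containing $i$ contribute, and the term is $(\alpha_i+\alpha_j)\chi_i(h_z)_j\,\nabla z_i\cdot\nabla z_j$. These are precisely the last two terms of \eqref{eq:Q-product}.

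The main point needing care is sign and ordering bookkeeping for pairs $j<k$ where $i$ is either the smaller or the larger index. Writing every sum symmetrically as $\tfrac12\sum_{j\ne k}$ removes the ordering issue. The computation is pointwise and makes no use of the EPD property of $h_z$, so it holds almost everywhere on the active side. The zero extension to the inactive side is justified as in \Cref{lem:chain}.
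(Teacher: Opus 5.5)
Your proposal is correct and follows the paper's route: expand the product $\chi\widehat h$ in \eqref{eq:Qpsi}, sort the terms into the $\chi$-group, the $\widehat h$-group and the cross group, and use $\chi_j=\chi_{jk}=0$ away from the selected index. You are also right that the EPD property of $h_z$ is not needed for the identity as stated, although the paper's sketch invokes it; it only enters afterwards, in the proof of \Cref{prop:stabilized}, when $\chi\mathfrak Q_{h_z}$ is reduced via \eqref{eq:root-entropy-coercive}. One wording fix: the quotient $(\Psi_j-\Psi_k)/(z_j-z_k)$ is itself symmetric in $j,k$, because numerator and denominator are each antisymmetric, and that symmetry is what justifies writing the sums as $\tfrac12\sum_{j\ne k}$.
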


\begin{proof}
This is \cite[Lemma~11.4]{BotheAdditive2026} with
$\mathfrak a_j$ replaced by $\alpha_j$.  The identity is purely algebraic:
expand $D^2(\chi\widehat h)$ in \eqref{eq:Qpsi}, use that $h_z$ satisfies
the pairwise EPD system, and use $\chi_j=0$ for $j\ne i$.  No special formula
for the modal diffusivities enters.
\end{proof}

\begin{proposition}[Coercive entropy-stabilized truncation]
\label{prop:stabilized}
For every compact root rectangle $R\Subset\cZ$ there is
$\eps_R>0$ such that, for $0<\eps\le\eps_R$, every selected root, every
level $k\in[a_i,b_i]$, and either sign,
\begin{equation}
 \Psi_{i,k}^{\pm}(z)
 :=\Phi_{i,k}^{\pm}(z)+\eps\chi(z)\widehat h(z)
 \label{eq:stabilized}
\end{equation}
is a one-sided $C^{1,1}$ potential admissible in the chain rule and satisfies
\begin{equation}
 \mathfrak Q_{\Psi_{i,k}^{\pm}}(z;\nabla z)
 \ge c_R|\nabla r|^2
 +c_R\eps r^2\sum_{j\ne i}|\nabla z_j|^2.
 \label{eq:stabilized-coercive}
\end{equation}
Moreover
\begin{equation}
 c_Rr^2\le\Psi_{i,k}^{\pm}\le C_Rr^2,
 \qquad |\Psi_i|\le C_Rr,
 \qquad |\Psi_j|\le C_Rr^2\quad(j\ne i).
 \label{eq:stabilized-flux-bounds}
\end{equation}
\end{proposition}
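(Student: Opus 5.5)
The proof combines the exact identities of \Cref{lem:multi-EPD} and \Cref{lem:Q-product} with the linearity of $\Psi\mapsto\mathfrak Q_\Psi$. Since $\mathfrak Q_\Psi$ is linear in $\Psi$,
\[
 \mathfrak Q_{\Psi^\pm_{i,k}}=\mathfrak Q_{\Phi^\pm_{i,k}}+\eps\,\mathfrak Q_{\chi\widehat h}
\]
holds almost everywhere on the active side. On the inactive side both potentials vanish identically with vanishing Hessian, so the inequality there is trivial, with $r=0$ and $\nabla r=0$ almost everywhere.

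\emph{Admissibility.} The function $\chi\widehat h=\tfrac12 r^2\widehat h$ vanishes together with its first derivatives on $\{z_i=k\}$, and it is smooth on the closed active branch because $h_z$ is smooth on $R$. The same one-sided $C^{1,1}$ argument as for $\Phi^\pm_{i,k}$ in \Cref{lem:chain} therefore applies to the sum.

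\emph{Coercivity.} By \eqref{eq:pure-coercive}, $\mathfrak Q_{\Phi^\pm}\ge c_R|\nabla r|^2$. I expand $\mathfrak Q_{\chi\widehat h}$ using \eqref{eq:Q-product}, \eqref{eq:Q-chi} and \eqref{eq:root-entropy-coercive}. The term
\[
 \chi\,\mathfrak Q_{h_z}=\tfrac12r^2\sum_j\alpha_jm_j'|\nabla z_j|^2
\]
is good: it is at least $c_Rr^2\sum_{j\ne i}|\nabla z_j|^2$ by \Cref{lem:coefficient-ledger}. The term $\widehat h\,\alpha_i|\nabla r|^2$ is nonnegative, since $\widehat h\ge1$, and $2\alpha_i\chi_i(h_z)_i|\nabla z_i|^2$ is bounded by $C_Rr|\nabla r|^2\le C_R|\nabla r|^2$ because $r$ is bounded on $R$. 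The remaining cross terms come from the Cauchy part of $\mathfrak Q_\chi$ and from the last sum in \eqref{eq:Q-product}. They have the form $\chi_i\,c_{ij}(z)\,\nabla z_i\cdot\nabla z_j$ with $|c_{ij}|\le C_R$ and $|\chi_i|=r$, and on the active side $\nabla z_i=\pm\nabla r$. Young's inequality gives
\[
 C_Rr|\nabla r||\nabla z_j|\le \tfrac{c'}{4}r^2|\nabla z_j|^2+C'_R|\nabla r|^2,
\]
where $c'$ is the good constant from $\chi\mathfrak Q_{h_z}$. Altogether this yields
\[
 \eps\,\mathfrak Q_{\chi\widehat h}\ge \tfrac{c'}{2}\eps r^2\sum_{j\ne i}|\nabla z_j|^2-\eps C''_R|\nabla r|^2.
\]
Choosing $\eps_R\le c_R/(2C''_R)$ absorbs the loss into half of the pure EPD term, which proves \eqref{eq:stabilized-coercive}.

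\emph{Size and flux bounds.} On $R$ we have $1\le\widehat h\le C$, so \eqref{eq:Phi-size} gives $c_Rr^2\le\Psi^\pm_{i,k}\le(C_R+\eps C)r^2$. For the derivatives,
\[
 \partial_i(\chi\widehat h)=\varsigma r\widehat h+\tfrac12r^2(h_z)_i=O(r),
 \qquad
 \partial_j(\chi\widehat h)=\tfrac12r^2(h_z)_j=O(r^2)\quad(j\ne i).
\]
Combined with \eqref{eq:Phi-der}, these give \eqref{eq:stabilized-flux-bounds}. All constants are uniform in $k\in[a_i,b_i]$ and in the sign, because they depend only on $\mathcal D_R$.

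The main obstacle is the bookkeeping of the cross terms in \eqref{eq:Q-product}. The scheme only closes if every such term carries exactly one factor $r$ paired with $|\nabla r|$; a term quadratic in transverse gradients without an $r^2$ weight could not be absorbed. The argument therefore relies on the fact that $\chi_j=0$ for $j\ne i$ and that $h_z$ satisfies the EPD system, so the only surviving mixed terms are those listed in \eqref{eq:Q-product}.
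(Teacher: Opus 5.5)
Your proof is correct and follows the paper's argument. It uses linearity of $\Psi\mapsto\mathfrak Q_\Psi$, the product identity \eqref{eq:Q-product} with the exact entropy dissipation \eqref{eq:root-entropy-coercive} as the source of transverse coercivity, Young's inequality on the $r|\nabla r||\nabla z_j|$ cross terms, and smallness of $\eps$ to absorb the $|\nabla r|^2$ losses into the pure EPD term. The only difference is cosmetic: your Young weighting keeps the transverse loss at order $\eps$ and absorbs it directly into $\eps\chi\mathfrak Q_{h_z}$, whereas the paper splits into $\eta|\nabla r|^2+C_\eta\eps^2r^2|\nabla z_j|^2$ and absorbs the $O(\eps^2)$ term for small $\eps$; both close.
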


\begin{proof}
The entropy correction is introduced to absorb the transverse error terms.  The pure EPD branch gives
\[
 \mathfrak Q_{\Phi_{i,k}^{\pm}}\ge c_0|\nabla r|^2.
\]
By \eqref{eq:root-entropy-coercive}, compactness of $R$, and $\chi=r^2/2$,
\[
 \chi\mathfrak Q_{h_z}
 \ge c_1r^2\sum_j|\nabla z_j|^2.
\]
On $R$, the functions $\widehat h$, $Dh_z$, the reciprocal root gaps,
and the coefficients $\alpha_j$ are bounded.  Since $\chi_i=\pm r$,
the negative part of the remaining terms in \eqref{eq:Q-product}
is bounded pointwise by
\[
 C r|\nabla r|^2
 +C r|\nabla r|\sum_{j\ne i}|\nabla z_j|.
\]
After multiplication by $\varepsilon$, Young's inequality gives, for arbitrary $\eta>0$,
\[
 \varepsilon C r|\nabla r|\,|\nabla z_j|
 \le \eta|\nabla r|^2
 +C_{\eta}\varepsilon^2r^2|\nabla z_j|^2.
\]
The root rectangle gives a uniform upper bound for $r$, so the term
$\varepsilon C r|\nabla r|^2$ is absorbed by the pure EPD coercivity once $\varepsilon$ is sufficiently small.  Choosing first $\eta>0$ so that $(m-1)\eta\le c_0/4$, and then $\varepsilon_R>0$ so small that the sum of the $O(\varepsilon^2)$ transverse terms is dominated by the positive $\varepsilon\chi\mathfrak Q_{h_z}$ term yields
\[
 \mathfrak Q_{\Phi+\varepsilon\chi\widehat h}
 \ge c_R|\nabla r|^2
 +c_R\varepsilon r^2\sum_{j\ne i}|\nabla z_j|^2.
\]
The flux bounds follow by combining \eqref{eq:Phi-der} with boundedness of $\widehat h$ and $Dh_z$ on $R$.  This is the affine Darken version of the stabilized coercive estimate in \cite[Proposition~11.5]{BotheAdditive2026}.
\end{proof}

\subsection{Scalar De Giorgi regularity of the Stieltjes roots}
\label{sec:DG}
For the remainder of this section, fix $\eps=\eps_R$ in
\eqref{eq:stabilized}.
The quadratic estimate alone does not provide the required two-time
propagation estimate, because the temporal energy of the stabilized truncation
is only uniformly equivalent to $(z_i-k)_\pm^2$.  We therefore derive the
time-propagation estimate directly from the scalar root equation.  The
additional positive term
\[
 (z_i-k)_\pm^2\sum_{j\ne i}|\nabla z_j|^2
\]
created by the entropy stabilization controls the cross-root drift in the
logarithmic test of the selected scalar root equation.

For $x_0\in\overline\Omega$ and $\rho>0$ put
\[
 D_\rho(x_0):=B_\rho(x_0)\cap\Omega,
 \qquad
 Q_\rho^\Omega(t_0,x_0):=(t_0-\rho^2,t_0)\times D_\rho(x_0).
\]
We suppress the centers when no confusion can arise.

\begin{lemma}[Uniform local geometry up to the physical boundary]
\label{lem:uniform-local-geometry-DG}
There are $\rho_\Omega>0$ and constants depending only on $d$ and the
$C^2$-geometry of $\Omega$ such that, for every
$x_0\in\overline\Omega$ and $0<\rho\le\rho_\Omega$:
\begin{enumerate}
\item $|D_\rho(x_0)|\asymp_\Omega \rho^d$, and there is a uniform thin-annulus modulus: for every $\varepsilon>0$ there is $\lambda_\varepsilon\in(0,1)$, depending only on $\varepsilon$ and the local geometry, such that
\[
 |D_\rho(x_0)\setminus D_{\lambda_\varepsilon\rho}(x_0)|
 \le \varepsilon |D_\rho(x_0)|
 \qquad(0<\rho\le\rho_\Omega);
\]
\item the Sobolev and parabolic Sobolev inequalities on $D_\rho(x_0)$ hold
with constants independent of $x_0$ and $\rho$, for functions which vanish
near the artificial boundary $\partial B_\rho(x_0)\cap\Omega$ but need not
vanish on $\partial\Omega$.  In particular, if $I$ is an interval and
{$\phi$} has this spatial support property, then
\begin{equation}\label{eq:boundary-parabolic-sobolev-DG}
 \iint_{I\times D_\rho}|{\phi}|^{2(d+2)/d}
 \le C_\Omega
 \left(\operatorname*{ess\,sup}_{t\in I}
       \int_{D_\rho}|{\phi(t)}|^2\right)^{2/d}
 \iint_{I\times D_\rho}|\nabla {\phi}|^2,
\end{equation}
\item for {$\varphi\in W^{1,1}(D_\rho)$} and $h<\ell$,
\begin{equation}\label{eq:relative-DeGiorgi-isoperimetric}
 (\ell-h)
 |\{{\varphi}\le h\}\cap D_\rho|
 |\{{\varphi}\ge \ell\}\cap D_\rho|
 \le C_\Omega\rho^{d+1}
 \int_{D_\rho\cap\{h<{\varphi}<\ell\}}|\nabla {\varphi}|\,dx.
\end{equation}
\end{enumerate}
\end{lemma}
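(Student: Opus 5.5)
The plan is to reduce everything to standard facts for bounded Lipschitz (here $C^2$) domains by uniform local flattening. Since $\partial\Omega$ is compact and of class $C^2$, there is $\rho_\Omega>0$ such that for every $x_0\in\partial\Omega$ the set $B_{4\rho_\Omega}(x_0)\cap\Omega$ is the region above a $C^2$ graph with uniformly bounded $C^2$ norm. Equivalently, a $C^2$ diffeomorphism $\Theta_{x_0}$ straightens it onto a half-ball, with $\|D\Theta_{x_0}\|,\|D\Theta_{x_0}^{-1}\|\le 2$ uniformly in $x_0$ after shrinking $\rho_\Omega$. For interior points with $\operatorname{dist}(x_0,\partial\Omega)\ge\rho$ the set $D_\rho$ is a full ball. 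For the intermediate case, $D_\rho(x_0)$ is contained in and contains comparable sets of the form $D_{c\rho}(y_0)$ with $y_0\in\partial\Omega$. The key structural claim is that each $D_\rho(x_0)$, $\rho\le\rho_\Omega$, is bi-Lipschitz equivalent, by a map $G_{x_0,\rho}$ composed with the dilation by $\rho$, to a domain in a fixed compact family of uniformly Lipschitz (indeed uniformly John and uniformly convex-after-flattening) domains of unit size. The maps carry Lipschitz constants independent of $(x_0,\rho)$. I would build this by taking $G$ to be the flattening map restricted to the ball, so the image is the intersection of a half-space with a set that is uniformly close to a unit ball.

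Item 1 then follows by scaling. The measure comparison $|D_\rho|\asymp\rho^d$ comes from the bi-Lipschitz Jacobian bounds and the half-ball lower bound. For the annulus, $D_\rho\setminus D_{\lambda\rho}\subset \Omega\cap(B_\rho\setminus B_{\lambda\rho})$, whose measure is at most $|B_\rho\setminus B_{\lambda\rho}|=(1-\lambda^d)\omega_d\rho^d$. Combined with the lower bound $|D_\rho|\ge c\rho^d$, this gives $\lambda_\varepsilon$ with $1-\lambda_\varepsilon^d\le c\varepsilon/\omega_d$; no geometry beyond the lower density bound is needed.

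For item 2, a function vanishing near $\partial B_\rho\cap\Omega$ is transported by $G$ to the unit reference set, where it vanishes near the curved cap but not on the flat part. I would reflect evenly across the flat boundary to obtain a compactly supported $H^1$ function on a full region. This gives a uniform Gagliardo--Nirenberg inequality $\|\phi\|_{2(d+2)/d}^{2(d+2)/d}\le C\|\phi\|_2^{4/d}\|\nabla\phi\|_2^2$ with no lower-order term, because compact support lets us use the whole-space inequality. The inequality is scale invariant, so undoing the dilation and the bi-Lipschitz change introduces only constants depending on the Lipschitz bounds. Integrating in time yields \eqref{eq:boundary-parabolic-sobolev-DG}.

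Item 3 is the main obstacle. The De Giorgi relative isoperimetric inequality needs a Poincaré-type $L^1$ inequality on $D_\rho$ with scale-uniform constant. I would first prove, on the unit reference domains, $\|\psi-\bar\psi\|_{L^{1}}\le C\|\nabla\psi\|_{L^1}$, uniformly over the family. The family is compact in the bi-Lipschitz sense and each member is a Lipschitz domain, so either a compactness-contradiction argument or an explicit convexity-type proof applies. Since each flattened image is nearly a half-ball and convex up to a uniform bi-Lipschitz distortion, the classical convex-domain estimate $|\psi(x)-\psi(y)|$ integrated along segments gives the constant directly. Apply this to $\psi=\min(\max(\varphi,h),\ell)$. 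The mean oscillation bound gives $(\ell-h)|\{\varphi\le h\}||\{\varphi\ge\ell\}|\le |D|\int|\psi-\bar\psi|$, which is at most $C|D|\,\mathrm{diam}\int_{\{h<\varphi<\ell\}}|\nabla\varphi|$. After scaling, $|D_\rho|\,\rho\le C\rho^{d+1}$, which is \eqref{eq:relative-DeGiorgi-isoperimetric}. The only delicate point is ensuring the Poincaré constant is uniform for the intermediate centers near the boundary, which the covering/comparable-set reduction above handles.
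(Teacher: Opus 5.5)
Your proposal is correct and follows the paper's proof essentially step for step: rescaled graph charts at a nearest boundary point, the annulus bound from $|B_\rho\setminus B_{\lambda\rho}|$ together with the lower density bound, even reflection plus zero extension so that the whole-space Gagliardo--Nirenberg inequality applies, and a uniform $L^1$ Poincar\'e inequality applied to the truncation $\min\{\max\{\varphi,h\},\ell\}$ through the double-integral oscillation bound. One remark: for intermediate centers the uniform Poincar\'e constant comes from your direct flattening at the nearest boundary point at scale $\rho$ (the image is a convex set cut by a half-space), not from sandwiching $D_\rho(x_0)$ between comparable sets $D_{c\rho}(y_0)$, since Poincar\'e constants are not monotone under inclusion.
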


\begin{proof}
By compactness of $\partial\Omega$ and the $C^2$ regularity of the boundary,
there are $\rho_\Omega>0$, $c_\Omega>0$, and $C_\Omega<\infty$ such that,
for every $x_0\in\overline\Omega$ and $0<\rho\le\rho_\Omega$, the set
$D_\rho(x_0)$ satisfies
\begin{equation}\label{eq:local-volume-density-DG}
 c_\Omega\rho^d\le |D_\rho(x_0)|\le \omega_d\rho^d,
\end{equation}
and the rescaled intersections have uniform Lipschitz character.  More
precisely, choose $\rho_\Omega$ below a fixed tubular radius and a uniform
boundary-chart radius.  If $B_{2\rho}(x_0)$ meets $\partial\Omega$, choose
a nearest boundary point and a $C^2$ graph chart there; after the associated
rigid motion and rescaling by $\rho$, the graph maps and their inverses have
uniformly bounded first derivatives.  If $B_{2\rho}(x_0)$ does not meet the boundary,
the identity chart is used.  Shrinking $\rho_\Omega$ if necessary, these
charts give uniform scale-invariant $W^{1,q}$ extension and Poincar\'e
constants, $1\le q\le2$, for the sets $D_\rho(x_0)$.  In addition, if a
function vanishes in a neighborhood of
$\partial B_\rho(x_0)\cap\Omega$, reflection across the flattened physical
boundary followed by zero extension across the artificial boundary gives an
extension $E\phi\in W^{1,2}(\mathbb R^d)$ such that
\begin{equation}\label{eq:local-reflection-extension-DG}
 E\phi=\phi\ \text{ on }D_\rho(x_0),\qquad
 \operatorname{supp}E\phi\subset B_{C_\Omega\rho}(x_0),
\end{equation}
with
\begin{equation}\label{eq:local-reflection-extension-norm-DG}
 \|E\phi\|_{L^2(\mathbb R^d)}
 \le C_\Omega\|\phi\|_{L^2(D_\rho)},\qquad
 \|\nabla E\phi\|_{L^2(\mathbb R^d)}
 \le C_\Omega\|\nabla\phi\|_{L^2(D_\rho)}.
\end{equation}
All constants here depend only on the fixed $C^2$ geometry of $\Omega$.
These are the standard uniform graph-chart consequences recorded in
\cite[Lemma~12.1]{BotheAdditive2026}.

The upper volume bound in \eqref{eq:local-volume-density-DG} is immediate,
and the lower bound follows from the uniform interior cone property.  Hence
\[
 |D_\rho\setminus D_{\lambda\rho}|
 \le |B_\rho\setminus B_{\lambda\rho}|
 =\omega_d(1-\lambda^d)\rho^d.
\]
Given $\varepsilon>0$, choose $\lambda_\varepsilon\in(0,1)$ so close to one
that
$\omega_d(1-\lambda_\varepsilon^d)\le
 \varepsilon c_\Omega$.
Together with \eqref{eq:local-volume-density-DG} this proves the
thin-annulus assertion in part~(1).

For a function $\phi$ as in part~(2), apply the Euclidean Sobolev and
Gagliardo--Nirenberg inequalities to the extension in
\eqref{eq:local-reflection-extension-DG}--
\eqref{eq:local-reflection-extension-norm-DG}.  In particular, uniformly in
$x_0$ and $\rho$,
\begin{equation}\label{eq:local-GN-DG}
 \|\phi\|_{L^{2(d+2)/d}(D_\rho)}^{2(d+2)/d}
 \le C_\Omega
 \|\phi\|_{L^2(D_\rho)}^{4/d}
 \|\nabla\phi\|_{L^2(D_\rho)}^2.
\end{equation}
For $d>2$ the corresponding spatial Sobolev estimate follows in the same
way from $W^{1,2}(\mathbb R^d)\hookrightarrow
L^{2d/(d-2)}(\mathbb R^d)$; for $d=2$ the corresponding scale-invariant
finite-exponent estimates follow after rescaling.  Applying \eqref{eq:local-GN-DG} for almost every time and then
integrating in time gives \eqref{eq:boundary-parabolic-sobolev-DG}.

It remains to verify part~(3).  The uniform $W^{1,1}$ Poincar\'e inequality
on $D_\rho$ gives
\begin{equation}\label{eq:local-L1-poincare-DG}
 \int_{D_\rho}|w-w_{D_\rho}|\,dx
 \le C_\Omega\rho\int_{D_\rho}|\nabla w|\,dx,
 \qquad
 w_{D_\rho}:=|D_\rho|^{-1}\int_{D_\rho}w\,dx.
\end{equation}
Consequently,
\begin{align*}
 \int_{D_\rho}\!\int_{D_\rho}|w(x)-w(y)|\,dx\,dy
 &\le 2|D_\rho|\int_{D_\rho}|w-w_{D_\rho}|\,dx\\
 &\le C_\Omega\rho^{d+1}
       \int_{D_\rho}|\nabla w|\,dx.
\end{align*}
For $\varphi\in W^{1,1}(D_\rho)$ and $h<\ell$, take
\[
 w=\min\{(\varphi-h)_+,\ell-h\}.
\]
If $A=\{\varphi\le h\}\cap D_\rho$ and
$B=\{\varphi\ge\ell\}\cap D_\rho$, then
$|w(x)-w(y)|=\ell-h$ on $A\times B$, while
$|\nabla w|=|\nabla\varphi|\mathbf1_{\{h<\varphi<\ell\}}$ almost everywhere.
The preceding double-integral estimate therefore yields
\eqref{eq:relative-DeGiorgi-isoperimetric}.
\end{proof}

\subsubsection{Quadratic and two-time stabilized estimates}

The one-sided Caccioppoli estimate from the preceding section remains our
basic level-set estimate.  We record in addition the two-time form which
retains the intrinsic stabilized energy at the initial slice.

\begin{proposition}[One-sided root Caccioppoli inequality]
\label{prop:root-Caccioppoli-dossier}
Let $z$ satisfy the strong root hypotheses of
Lemma~\ref{lem:chain} on a neighbourhood of
$Q_\rho^\Omega$, assume that its range lies in the compact root rectangle
$R$, and fix a root $i$, a level $k\in[a_i,b_i]$, and either sign.  Put
$r=(z_i-k)_+$ or $r=(k-z_i)_+$.  There are constants $c_R,C_R>0$ such that,
for every $C^1$ cutoff $0\le\eta\le1$ which vanishes near the lower time face
and the artificial lateral boundary but may be nonzero on the physical
boundary,
\begin{align}
 &\operatorname*{ess\,sup}_{s}
   \int_{D_\rho}\eta^2r^2(s)
 +c_R\iint_{Q_\rho^\Omega}\eta^2|\nabla r|^2 \notag\\
 &\quad
 +c_R\varepsilon_R\iint_{Q_\rho^\Omega}
   \eta^2r^2\sum_{j\ne i}|\nabla z_j|^2
 \le C_R\iint_{Q_\rho^\Omega}
 r^2\bigl(|\nabla\eta|^2+\eta|\partial_t\eta|\bigr).
 \label{eq:root-Caccioppoli-cutoff-dossier-v11}
\end{align}
Consequently, for $0<\varrho<\rho$,
\begin{align}
 &\operatorname*{ess\,sup}_{t\in(t_0-\varrho^2,t_0)}
   \int_{D_\varrho}r^2(t)
 +\iint_{Q_\varrho^\Omega}|\nabla r|^2 \notag\\
 &\quad
 +c_R\varepsilon_R\iint_{Q_\varrho^\Omega}
 r^2\sum_{j\ne i}|\nabla z_j|^2
 \le \frac{C_R}{(\rho-\varrho)^2}
 \iint_{Q_\rho^\Omega}r^2.
 \label{eq:root-Caccioppoli-dossier-v11}
\end{align}
The constants are uniform for interior and boundary cylinders below
$\rho_\Omega$.
\end{proposition}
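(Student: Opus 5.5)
The plan is to test the strong root chain rule of Lemma~\ref{lem:chain} with the stabilized one-sided potential $\Psi=\Psi_{i,k}^{\pm}$ of Proposition~\ref{prop:stabilized}, multiplied by $\eta^2$. Concretely, apply \eqref{eq:chain} with $\Psi$ and pair it against $\eta^2$ on $(t_0-\rho^2,s)\times D_\rho$. The cutoff vanishes near the lower time face, so no initial term appears.

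Integrating the divergence term by parts produces two kinds of contributions.
\begin{enumerate}[label=(\alph*)]
\item A boundary flux on the physical boundary, $\eta^2\sum_j\alpha_j\Psi_j\partial_\nu z_j$. This vanishes because $\partial_\nu z_j=0$ by Theorem~\ref{thm:root-system} and Lemma~\ref{lem:root-strong-transfer}. No term arises on the artificial boundary, since $\eta$ vanishes near it.
\item A cutoff term $-2\eta\nabla\eta\cdot\sum_j\alpha_j\Psi_j\nabla z_j$.
\end{enumerate}
The time derivative gives $\int\eta^2\Psi(s)-\iint 2\eta\partial_t\eta\,\Psi$. The production term $\iint\eta^2\mathfrak Q_\Psi$ is bounded below by \eqref{eq:stabilized-coercive}. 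Using the two-sided bound $c_Rr^2\le\Psi\le C_Rr^2$ from \eqref{eq:stabilized-flux-bounds}, the time terms turn into the ess-sup of $\int\eta^2r^2$ on the left and $C_R\iint r^2\eta|\partial_t\eta|$ on the right.

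The main point is controlling the cutoff flux. By \eqref{eq:stabilized-flux-bounds} it is bounded by
\[
 C_R\eta|\nabla\eta|\Bigl(r|\nabla z_i|+r^2\sum_{j\ne i}|\nabla z_j|\Bigr).
\]
On the active set $\nabla z_i=\nabla r$, and off it $\Psi$ and its gradient vanish. For the selected term, Young's inequality gives
\[
 \eta|\nabla\eta|\,r|\nabla r|\le\theta\eta^2|\nabla r|^2+C_\theta r^2|\nabla\eta|^2,
\]
and the first piece is absorbed by $c_R\iint\eta^2|\nabla r|^2$. For the transverse terms, write $r^2|\nabla z_j|\,\eta|\nabla\eta|=(\eta r|\nabla z_j|)(r|\nabla\eta|)$ and apply Young again:
\[
 \theta\eps_R\eta^2r^2|\nabla z_j|^2+C_\theta\eps_R^{-1}r^2|\nabla\eta|^2 .
\]
The first piece is absorbed by the stabilization term $c_R\eps_R\iint\eta^2r^2\sum_{j\ne i}|\nabla z_j|^2$. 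This transverse control is exactly why the entropy stabilization is needed, and I expect it to be the only delicate step. Since $\eps_R$ is a fixed structural constant, the factor $\eps_R^{-1}$ is harmless. Taking the supremum over $s$ and combining the two lower bounds then gives \eqref{eq:root-Caccioppoli-cutoff-dossier-v11}.

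Justifying the test itself requires the regularity of Lemma~\ref{lem:root-strong-transfer}: $z_t\in L^p$, $z\in L^pW^{2,p}$, and bounded gradients on the compact strip. With this, the identity holds almost everywhere and can be integrated against the Lipschitz test $\eta^2$. The $C^{1,1}$ one-sided extension ensures that no interface measure appears.

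For \eqref{eq:root-Caccioppoli-dossier-v11}, choose $\eta$ equal to $1$ on $Q^\Omega_\varrho$ and supported in $Q^\Omega_\rho$ away from the lower time face and from $\partial B_\rho\cap\Omega$. Take $|\nabla\eta|\le C/(\rho-\varrho)$ and $|\partial_t\eta|\le C/(\rho^2-\varrho^2)\le C/(\rho-\varrho)^2$; the last inequality uses $\rho^2-\varrho^2\ge(\rho-\varrho)^2$. The constants depend only on $\mathcal D_R$, since no geometric inequality of Lemma~\ref{lem:uniform-local-geometry-DG} is used. They are therefore uniform for interior and boundary cylinders.
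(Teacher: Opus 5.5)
Your proposal is correct and is essentially the paper's proof. You test the one-sided chain identity for the stabilized potential $\Psi_{i,k}^{\pm}$ against $\eta^2$, remove the physical boundary flux with $\partial_\nu z_j=0$, and convert the time terms via $\Psi\asymp_R r^2$. You then absorb the flux bound from \eqref{eq:stabilized-flux-bounds} into the selected and stabilized transverse dissipation by Young's inequality, using the same splitting $r^2|\nabla z_j||\nabla\eta|=(r|\nabla z_j|)(r|\nabla\eta|)$ and the same standard cutoffs (with $\rho^2-\varrho^2\ge(\rho-\varrho)^2$ for the time derivative) as the paper.
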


\begin{proof}
Put $\Psi=\Psi_{i,k}^{\pm}$ and
\[
 F_\Psi:=\sum_j\alpha_j\Psi_j\nabla z_j.
\]
Multiply the chain identity \eqref{eq:chain} by $\eta^2$ and integrate over a time interval beginning where the cutoff vanishes.  The physical boundary contributes nothing because $\partial_\nu z_j=0$.  Integration by parts gives
\[
 \int\eta^2\Psi(t)
 +\iint\eta^2\mathfrak Q_\Psi
 \le \iint 2\eta |F_\Psi||\nabla\eta|
 +\iint 2\eta|\partial_t\eta|\Psi.
\]
By \eqref{eq:stabilized-flux-bounds},
\[
 |F_\Psi|
 \le C_R\bigl(r|\nabla r|+r^2\sum_{j\ne i}|\nabla z_j|\bigr),
 \qquad \Psi\asymp_R r^2.
\]
The first flux term is absorbed by the selected-gradient part of
\eqref{eq:stabilized-coercive}; for the transverse terms we write
\[
 r^2|\nabla z_j||\nabla\eta|
 =(r|\nabla z_j|)(r|\nabla\eta|)
\]
and absorb the first factor into the stabilized transverse dissipation.  Young's inequality therefore yields
\[
 \sup_s\int\eta^2r^2(s)
 +c_R\iint\eta^2|\nabla r|^2
 +c_R\varepsilon_R\iint\eta^2r^2\sum_{j\ne i}|\nabla z_j|^2
 \le C_R\iint r^2(|\nabla\eta|^2+\eta|\partial_t\eta|).
\]
Standard spatial and temporal cutoffs between $Q_\rho^\Omega$ and $Q_\varrho^\Omega$ give \eqref{eq:root-Caccioppoli-dossier-v11}.  Compare \cite[Proposition~12.2]{BotheAdditive2026}.
\end{proof}

\begin{lemma}[Two-time stabilized level energy]
\label{lem:two-time-stabilized-energy}
Assume that the strong-solution, compact-range, and Neumann-boundary
hypotheses of \Cref{prop:root-Caccioppoli-dossier} hold on a neighborhood
of the entire cylinder $[t_1,t_2]\times D_\rho(x_0)$, where $t_1<t_2$.
Let $\zeta=\zeta(x)$ be a time-independent spatial cutoff which
vanishes near the artificial boundary and may be nonzero on
$\partial\Omega$.  Then, with $r=(z_i-k)_\pm$ and
$\Psi=\Psi_{i,k}^{\pm}$,
\begin{align}
 &\int \zeta^2\Psi(z({t_2}))
 +c_R\int_{{t_1}}^{{t_2}}\!\!\int \zeta^2|\nabla r|^2
 +c_R\varepsilon_R\int_{{t_1}}^{{t_2}}\!\!\int
     \zeta^2r^2\sum_{j\ne i}|\nabla z_j|^2 \notag\\
 &\qquad\le
 \int \zeta^2\Psi(z({t_1}))
 +C_R\int_{{t_1}}^{{t_2}}\!\!\int r^2|\nabla\zeta|^2.
 \label{eq:two-time-stabilized-energy}
\end{align}
All spatial integrals are taken over the corresponding physical domain
slice.
\end{lemma}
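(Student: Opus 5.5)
The plan is to repeat the proof of \Cref{prop:root-Caccioppoli-dossier} with a time-independent cutoff, keeping the energy at both end slices. Put $F_\Psi:=\sum_j\alpha_j\Psi_j\nabla z_j$. Multiply the chain identity \eqref{eq:chain} by $\zeta^2$ and integrate over $[t_1,t_2]\times D_\rho$. This test is admissible by \Cref{lem:chain}, with the one-sided $C^{1,1}$ potential of \Cref{prop:stabilized}, and the integration is justified by the strong regularity of \Cref{lem:root-strong-transfer}: $z\in W^{1,p}_tL^p_x$, so $t\mapsto\int\zeta^2\Psi(z(t))$ is absolutely continuous. Because $\zeta$ does not depend on time, the time term integrates exactly to
\[
 \int\zeta^2\Psi(z(t_2))-\int\zeta^2\Psi(z(t_1)),
\]
and no $\partial_t\zeta$ term appears. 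Integrating the divergence by parts produces $\iint 2\zeta\nabla\zeta\cdot F_\Psi$. The lateral contribution vanishes for two reasons: $\zeta$ vanishes near the artificial boundary, and on $\partial\Omega$ the flux $F_\Psi\cdot\nu=\sum_j\alpha_j\Psi_j\partial_\nu z_j$ is zero.

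This gives
\[
 \int\zeta^2\Psi(t_2)+\iint\zeta^2\mathfrak Q_\Psi
 \le\int\zeta^2\Psi(t_1)+2\iint\zeta|\nabla\zeta||F_\Psi|.
\]
The production term is bounded from below by \eqref{eq:stabilized-coercive}:
\[
 \mathfrak Q_\Psi\ge c_R|\nabla r|^2+c_R\varepsilon_Rr^2\sum_{j\ne i}|\nabla z_j|^2.
\]
For the flux I use \eqref{eq:stabilized-flux-bounds} together with the coefficient bounds of \Cref{lem:coefficient-ledger}, which give
\[
 |F_\Psi|\le C_R\Bigl(r|\nabla r|+r^2\sum_{j\ne i}|\nabla z_j|\Bigr).
\]
Young's inequality then splits the two pieces. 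The first gives $\zeta r|\nabla r||\nabla\zeta|\le\eta\zeta^2|\nabla r|^2+C_\eta r^2|\nabla\zeta|^2$. The second gives $\zeta r^2|\nabla z_j||\nabla\zeta|\le\eta\varepsilon_R\zeta^2r^2|\nabla z_j|^2+C_\eta\varepsilon_R^{-1}r^2|\nabla\zeta|^2$. Choosing $\eta$ small relative to $c_R$ absorbs both gradient pieces into half of the coercive production. The remainder is $C_R\iint r^2|\nabla\zeta|^2$, and since $\varepsilon_R$ is structural its inverse may be placed in $C_R$. This yields \eqref{eq:two-time-stabilized-energy} with halved constants $c_R$.

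The one delicate point is the exactness of the end-slice terms. I have to keep $\Psi$ itself at $t_1$ and $t_2$ and must not replace it by $r^2$, since that would lose the constants that the later propagation argument needs. Pointwise-in-time evaluation is legitimate because $z\in C([t_1,t_2];C^1)$, which follows from $p>d+2$. The second issue is the admissibility of the one-sided branch, specifically the absence of an interface measure. That is already covered by the condition $F=DF=0$ on $\{z_i=k\}$ in \Cref{lem:chain}. I therefore expect no real obstacle beyond rechecking that the Young absorption does not require $\eta$ to depend on anything outside $\mathcal D_R$.
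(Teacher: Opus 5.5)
Your proposal is correct and follows the paper's proof essentially step for step. You test the chain identity with the time-independent weight $\zeta^2$, use $\partial_\nu z_j=0$ and the support of $\zeta$ to remove all boundary terms, bound $|F_\Psi|$ by \eqref{eq:stabilized-flux-bounds}, and absorb both flux pieces into the stabilized coercivity \eqref{eq:stabilized-coercive} by Young's inequality, keeping the exact $\Psi$-energy at $t_1$ with unchanged coefficient.
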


\begin{proof}
Set
\[
 F_\Psi:=\sum_j\alpha_j(z)\Psi_j(z)\nabla z_j.
\]
Multiply \eqref{eq:chain} by $\zeta^2$ and integrate over
$(t_1,t_2)\times D_\rho$.  The contribution from the physical boundary
vanishes because $\partial_\nu z_j=0$ for every $j$, while $\zeta$ vanishes
near the artificial boundary.  Hence
\begin{align}
 &\int_{D_\rho}\zeta^2\Psi(z(t_2))
 +\int_{t_1}^{t_2}\!\!\int_{D_\rho}\zeta^2\mathfrak Q_\Psi
 \notag\\
 &\qquad=
 \int_{D_\rho}\zeta^2\Psi(z(t_1))
 -2\int_{t_1}^{t_2}\!\!\int_{D_\rho}
       \zeta F_\Psi\cdot\nabla\zeta .
 \label{eq:two-time-chain-identity-expanded}
\end{align}
By \eqref{eq:stabilized-flux-bounds} and the boundedness of the modal
coefficients on $R$,
\[
 |F_\Psi|
 \le C_R\left(r|\nabla r|+
          r^2\sum_{j\ne i}|\nabla z_j|\right).
\]
Using \eqref{eq:stabilized-coercive}, Young's inequality gives, with
constants depending only on the structural data and on the fixed
$\varepsilon_R$,
\begin{align*}
 2\zeta|F_\Psi||\nabla\zeta|
 &\le \frac{c_R}{2}\zeta^2|\nabla r|^2
 +\frac{c_R\varepsilon_R}{2}\zeta^2r^2
       \sum_{j\ne i}|\nabla z_j|^2
 +C_Rr^2|\nabla\zeta|^2.
\end{align*}
Indeed, the selected term is estimated as
\[
 C_R\zeta r|\nabla r||\nabla\zeta|
 \le \frac{c_R}{4}\zeta^2|\nabla r|^2
      +C_Rr^2|\nabla\zeta|^2,
\]
and, for each $j\ne i$,
\[
 C_R\zeta r^2|\nabla z_j||\nabla\zeta|
 \le \frac{c_R\varepsilon_R}{2(m-1)}
      \zeta^2r^2|\nabla z_j|^2
      +C_Rr^2|\nabla\zeta|^2.
\]
Insert these bounds into \eqref{eq:two-time-chain-identity-expanded} and absorb the
two dissipation terms into the left-hand side.  This gives
\eqref{eq:two-time-stabilized-energy}, with the coefficient of the initial
energy unchanged.  This is the localized two-time estimate used in
\cite[Lemma~12.3]{BotheAdditive2026}.
\end{proof}

\subsubsection{A scalar logarithmic estimate from the root equation}

Fix one root $u:=z_i$ and abbreviate
\[
 \Gamma_{ij}(z):=\frac{\alpha_i(z)+\alpha_j(z)}{z_i-z_j}.
\]
In divergence form its equation is
\begin{equation}\label{eq:selected-root-scalar-drift}
 u_t-\Div({\alpha_i(z)}\nabla u)
 +\sum_{j\ne i}\Gamma_{ij}(z)\nabla u\cdot\nabla z_j=0.
\end{equation}
All ${\alpha_i}$, ${\alpha_i^{-1}}$ and $\Gamma_{ij}$ are uniformly bounded on the compact
root rectangle.  The following change of orientation treats the upper
and lower phases within the same scalar equation.
If $\varepsilon\in\{-1,1\}$ and $c\in\mathbb R$, then along the given
solution
\begin{equation}\label{eq:oriented-selected-root-equation}
 {\widetilde u}:=\varepsilon z_i+c
 \quad\Longrightarrow\quad
 {\widetilde u_t}-\Div({\alpha_i(z)}\nabla {\widetilde u})
 +\sum_{j\ne i}\Gamma_{ij}(z)\nabla {\widetilde u}\cdot\nabla z_j=0,
 \qquad \partial_\nu {\widetilde u}=0.
\end{equation}
Indeed, this is $\varepsilon$ times the $i$-th root equation; the coefficient
field is evaluated along the original state $z$ and is unchanged.  Thus every
upper-phase argument for an oriented variable is also a lower-phase argument
for the original root.  The next lemma controls the drift in
\eqref{eq:selected-root-scalar-drift}--\eqref{eq:oriented-selected-root-equation}
by the additional weighted dissipation in
\eqref{eq:two-time-stabilized-energy}.

Fix once and for all a support parameter $\sigma_0\in(0,1/2)$; below we
may take $\sigma_0=1/4$.  For $H>0$, $0<\delta<\sigma_0/2$, and
$0\le v\le H$, define the scaled logarithmic square
\begin{align}
 \ell_{H,\delta}(v)
 &:={\left[\log\frac{(1-\sigma_0+\delta)H}
 {H-v+\delta H}\right]}_+,
 \label{eq:scaled-log-ell}\\
 {\Lambda_{H,\delta}}(v)&:=H^2\ell_{H,\delta}(v)^2.
 \label{eq:scaled-log-F}
\end{align}
Thus ${\Lambda_{H,\delta}}=0$ on $[0,\sigma_0H]$.  On the active interval
$(\sigma_0H,H]$, putting
$D(v):=H-v+\delta H$, direct differentiation gives
\begin{align}
 {\Lambda_{H,\delta}}'(v)
 &=\frac{2H^2\ell_{H,\delta}(v)}{D(v)},\notag\\
 {\Lambda_{H,\delta}}''(v)
 &=\frac{2H^2(1+\ell_{H,\delta}(v))}{D(v)^2},
 \label{eq:scaled-log-derivatives}
\end{align}
and hence
\begin{equation}\label{eq:scaled-log-ratio}
 \frac{({\Lambda_{H,\delta}}'(v))^2}{{\Lambda_{H,\delta}}''(v)}
 =2H^2\frac{\ell_{H,\delta}(v)^2}{1+\ell_{H,\delta}(v)}
 \le 2\sigma_0^{-2}L_\delta\,v^2,
 \qquad
 L_\delta:=\log\frac{1-\sigma_0+\delta}{\delta},
\end{equation}
where the quotient is defined as zero on the inactive interval.  By
\eqref{eq:scaled-log-ratio}, the error generated by the root drift is a factor
$L_\delta$ times the same weighted square $v^2|\nabla z_j|^2$ controlled by
the stabilized truncation energy.
The function ${\Lambda_{H,\delta}}$ is $C^{1,1}$ on $[0,H]$; the one-sided second
derivative in \eqref{eq:scaled-log-derivatives} is its weak second derivative
on the active side.  The standard scalar Lipschitz chain rule applies, and
there is no interface measure at $v=\sigma_0H$ because
${\Lambda_{H,\delta}}'(\sigma_0H)=0$.

\begin{lemma}[Scaled logarithmic-square root estimate]
\label{lem:root-logarithmic-estimate}
{Let $t_1<t_2$} and let $k\in[a_i,b_i]$.  Suppose for one of the two signs that
$v=(u-k)_+$ or $v=(k-u)_+$ satisfies $0\le v\le H$ on the cylinder under
consideration.  Let $\eta,\zeta$ be time-independent spatial cutoffs with
$0\le\eta,\zeta\le1$, $\zeta\equiv1$ on a neighborhood of
$\operatorname{supp}\eta$, and both vanishing near the artificial spatial
boundary.  Then, for $0<\delta<\sigma_0/2$,
\begin{align}
 &\int \eta^2{\Lambda_{H,\delta}}(v({t_2}))
 +c_R\int_{{t_1}}^{{t_2}}\!\!\int
   \eta^2{\Lambda_{H,\delta}}''(v)|\nabla v|^2
 \notag\\
 &\quad\le
 \int \eta^2{\Lambda_{H,\delta}}(v({t_1}))
 +C_RL_\delta\int \zeta^2v^2({t_1}) \notag\\
 &\qquad
 +C_RL_\delta\int_{{t_1}}^{{t_2}}\!\!\int
 v^2\bigl(|\nabla\eta|^2+|\nabla\zeta|^2\bigr).
 \label{eq:root-logarithmic-estimate}
\end{align}
The constants are independent of $H$, $\delta$, {$t_1$}, and {$t_2$}.  The same
estimate holds on a boundary patch, with no additional boundary term.
\end{lemma}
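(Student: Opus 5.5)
We test the oriented selected-root equation \eqref{eq:oriented-selected-root-equation} against $\eta^2\Lambda_{H,\delta}'(v)$, with $v=(\widetilde u-k')_+$ for the appropriate orientation. Thus we reduce to the upper phase $v=(u-k)_+$ for a root satisfying \eqref{eq:selected-root-scalar-drift}. Since $\Lambda_{H,\delta}$ is $C^{1,1}$ with no interface measure at $\sigma_0H$, and $v=0$ forces $\Lambda'=0$, the scalar Lipschitz chain rule applies. The regularity of \Cref{lem:root-strong-transfer} justifies the identity
\[
 \frac{\dd}{\dd t}\int\eta^2\Lambda(v)
 +\int\eta^2\alpha_i\Lambda''(v)|\nabla v|^2
 =-2\int\eta\,\alpha_i\Lambda'(v)\nabla v\cdot\nabla\eta
 -\sum_{j\ne i}\int\eta^2\Gamma_{ij}\Lambda'(v)\nabla v\cdot\nabla z_j .
\]
The physical boundary contributes nothing because $\partial_\nu z_i=0$ and $\eta$ is time independent. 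We then integrate over $(t_1,t_2)$.

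The two error terms are handled by Young's inequality weighted by $\Lambda''$. For the cutoff term,
\[
 2\eta\alpha_i|\Lambda'||\nabla v||\nabla\eta|
 \le \tfrac{d_*}{4}\eta^2\Lambda''|\nabla v|^2
 +C\frac{(\Lambda')^2}{\Lambda''}|\nabla\eta|^2 .
\]
By \eqref{eq:scaled-log-ratio}, the last quotient is at most $C\sigma_0^{-2}L_\delta v^2|\nabla\eta|^2$, which is the cutoff term on the right of \eqref{eq:root-logarithmic-estimate}. The drift term is treated the same way, using the bound on $\Gamma_{ij}$ from \Cref{lem:coefficient-ledger}:
\[
 \eta^2|\Gamma_{ij}||\Lambda'||\nabla v||\nabla z_j|
 \le \tfrac{d_*}{4(m-1)}\eta^2\Lambda''|\nabla v|^2
 +C_RL_\delta\,\eta^2v^2|\nabla z_j|^2 .
\]
After absorption this yields the left side of \eqref{eq:root-logarithmic-estimate} with $c_R=d_*/2$, plus the remainder $C_RL_\delta\iint\eta^2v^2\sum_{j\ne i}|\nabla z_j|^2$. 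The constants do not depend on $H$ or $\delta$, since \eqref{eq:scaled-log-ratio} is scale free.

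The main step is to control this weighted transverse dissipation, and we do it with the two-time stabilized energy \Cref{lem:two-time-stabilized-energy}. We apply it with the same level $k$ and sign, so that $r=v$, and with the cutoff $\zeta$. Since $\zeta\equiv1$ on $\operatorname{supp}\eta$, we have $\eta^2\le\zeta^2$. Combining this with $\Psi\le C_Rr^2$ from \eqref{eq:stabilized-flux-bounds} gives
\[
 \iint\eta^2v^2\sum_{j\ne i}|\nabla z_j|^2
 \le \frac{1}{c_R\varepsilon_R}\Bigl(C_R\int\zeta^2v^2(t_1)
 +C_R\iint v^2|\nabla\zeta|^2\Bigr).
\]
Here $\varepsilon_R$ is fixed and structural. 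Multiplying by $C_RL_\delta$ produces exactly the remaining two terms of \eqref{eq:root-logarithmic-estimate}.

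One point needs checking: the orientation change must match the sign used in the stabilized lemma. It does, because $(k-z_i)_+$ is covered by $\Phi^-_{i,k}$. The equation for $z$ itself is unchanged, so the transverse dissipation term is the same. Boundary patches need no further argument, since both tests use only $\partial_\nu z=0$ together with cutoffs that vanish near the artificial boundary.
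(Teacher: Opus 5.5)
Your proposal is correct and follows the paper's proof essentially step for step. You test the oriented selected-root equation with $\eta^2\Lambda_{H,\delta}'(v)$, absorb the cutoff and cross-root drift errors by Young's inequality against $\Lambda_{H,\delta}''(v)|\nabla v|^2$ using the ratio bound \eqref{eq:scaled-log-ratio}, and control the remaining $L_\delta\iint\eta^2v^2\sum_{j\ne i}|\nabla z_j|^2$ by \Cref{lem:two-time-stabilized-energy} with cutoff $\zeta$, $\eta^2\le\zeta^2$, and $\Psi\le C_Rv^2$, merely making explicit the absorption constants that the paper leaves implicit.
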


\begin{proof}
We treat $v=(u-k)_+$; the lower truncation follows from the oriented equation \eqref{eq:oriented-selected-root-equation}.  On the active set test \eqref{eq:selected-root-scalar-drift} by
$\eta^2\Lambda_{H,\delta}'(v)$ and integrate between $t_1$ and $t_2$.  The time derivative gives the difference of the logarithmic energies.  After integration by parts the principal term contains
\[
 \iint \alpha_i\eta^2\Lambda_{H,\delta}''(v)|\nabla v|^2,
\]
which is positive.  The cutoff term and the cross-root drift are bounded by
\[
 C_R\eta|\Lambda'|\,|\nabla v||\nabla\eta|,
 \qquad
 C_R\eta^2|\Lambda'|\,|\nabla v|\sum_{j\ne i}|\nabla z_j|.
\]
Using Young's inequality with the positive $\Lambda''|\nabla v|^2$ term and the elementary ratio estimate \eqref{eq:scaled-log-ratio} gives
\[
 \int\eta^2\Lambda(v(t_2))
 +c_R\iint\eta^2\Lambda''(v)|\nabla v|^2
 \le \int\eta^2\Lambda(v(t_1))
 +C_RL_\delta\iint v^2\Bigl(|\nabla\eta|^2+
 \eta^2\sum_{j\ne i}|\nabla z_j|^2\Bigr).
\]
The last term is controlled by the weighted transverse dissipation of the stabilized entropy.  Apply \Cref{lem:two-time-stabilized-energy} with a cutoff $\zeta\equiv1$ near $\operatorname{supp}\eta$ to control it by the initial $L^2$ level energy and the cutoff error.  Since $\Psi\asymp_R v^2$, this yields \eqref{eq:root-logarithmic-estimate}.  Compare \cite[Lemma~12.4]{BotheAdditive2026}.
\end{proof}

\begin{lemma}[Short-time propagation of a level-set fraction]
\label{lem:root-measure-propagation}
There exist numbers
\[
 \delta_0,\;\lambda_0,\;\epsilon_0,\;\gamma_0\in(0,1),
\]
depending only on the structural data, the compact root rectangle and the
local geometry of $\Omega$, with the following property.  Let
$0<\rho\le\rho_\Omega$, let {$t_*$} be a time, and choose $k,H$ with
\[
 [k,k+H]\subset[a_i,b_i].
\]
Suppose on $[{t_*},{t_*}+\delta_0\rho^2]\times D_\rho$ that
\[
 u\le k+H.
\]
If
\begin{equation}\label{eq:initial-low-fraction}
 |\{u({t_*})\le k\}\cap D_\rho|\ge\frac12|D_\rho|,
\end{equation}
then, for every $t\in[{t_*},{t_*}+\delta_0\rho^2]$,
\begin{equation}\label{eq:propagated-low-fraction}
 |\{u(t)\le k+(1-\epsilon_0)H\}\cap D_{\lambda_0\rho}|
 \ge\gamma_0|D_{\lambda_0\rho}|.
\end{equation}
The symmetric statement holds when the initial half-measure information is
at the upper side of an interval.
\end{lemma}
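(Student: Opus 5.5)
The plan is the standard De Giorgi logarithmic argument: apply \Cref{lem:root-logarithmic-estimate} on $[t_*,t]$ with $t\le t_*+\delta_0\rho^2$, to the upper truncation $v=(u-k)_+$, which satisfies $0\le v\le H$ on the cylinder because $u\le k+H$ there. Choose $\eta$ with $\eta\equiv1$ on $D_{\lambda\rho}$ and $|\nabla\eta|\le C/((1-\lambda)\rho)$, and $\zeta\equiv1$ near $\operatorname{supp}\eta$ with $|\nabla\zeta|\le C/\rho$. Both vanish near the artificial boundary and are unrestricted on $\partial\Omega$; $\lambda$ is fixed later. Drop the dissipation term.

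First bound the initial terms. Since $\Lambda_{H,\delta}(v)\le H^2L_\delta^2$ and $\Lambda_{H,\delta}$ vanishes where $v\le\sigma_0H$, the hypothesis \eqref{eq:initial-low-fraction} gives $\int\eta^2\Lambda(v(t_*))\le H^2L_\delta^2\cdot\frac12|D_\rho|$. This is the key point: only the complement of the low set contributes. The remaining initial term is crude, $C_RL_\delta\int\zeta^2v^2(t_*)\le C_RL_\delta H^2|D_\rho|$, and this is the main obstacle. It carries only one factor $L_\delta$ against the principal $L_\delta^2$, so it is beaten by taking $\delta$ small. The time-integrated cutoff term is at most $C_RL_\delta H^2\,\delta_0\rho^2|D_\rho|\big((1-\lambda)^{-2}+1\big)\rho^{-2}$, which is small once $\delta_0$ is chosen after $\lambda$ and $\delta$. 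Dividing by $H^2L_\delta^2$ leaves
\[
 \int_{D_{\lambda\rho}}\ell_{H,\delta}(v(t))^2
 \le\Bigl(\tfrac12+\tfrac{C_R}{L_\delta}+\tfrac{C_R\delta_0}{(1-\lambda)^2L_\delta}\Bigr)|D_\rho|.
\]

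Next, on $\{u(t)>k+(1-\epsilon_0)H\}$ we have $v>(1-\epsilon_0)H$. Taking $\epsilon_0=\delta$, this gives $\ell\ge\log\frac{1-\sigma_0+\delta}{2\delta}=L_\delta-\log2$. Chebyshev then bounds the measure of this set in $D_{\lambda\rho}$ by $(L_\delta-\log2)^{-2}L_\delta^2\big(\tfrac12+O(L_\delta^{-1})+O(\delta_0)\big)|D_\rho|$. Fix the parameters in the following order. Choose $\lambda=\lambda_0$ by the thin-annulus property of \Cref{lem:uniform-local-geometry-DG}, so that $|D_\rho\setminus D_{\lambda_0\rho}|\le\frac1{16}|D_\rho|$. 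Then choose $\delta$, hence $\epsilon_0$, small so that the prefactor is at most $\frac12+\frac1{16}$. Finally take $\delta_0$ small. This bounds the high set in $D_{\lambda_0\rho}$ by $\frac58|D_\rho|$. The low set $\{u(t)\le k+(1-\epsilon_0)H\}\cap D_{\lambda_0\rho}$ therefore has measure at least $|D_{\lambda_0\rho}|-\frac58|D_\rho|\ge(\frac{15}{16}-\frac58)|D_\rho|$, which yields $\gamma_0$ after comparing $|D_\rho|$ and $|D_{\lambda_0\rho}|$.

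The symmetric statement follows in the same way from the oriented equation \eqref{eq:oriented-selected-root-equation} with $\widetilde u=-z_i$. Boundary cylinders need no change, since the logarithmic estimate holds on boundary patches without an extra boundary term.
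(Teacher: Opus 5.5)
Your proposal is correct and follows the paper's proof essentially step for step. The shared steps are: the logarithmic estimate with $\eta\equiv1$ on $D_{\lambda_0\rho}$; the bound $H^2L_\delta^2|D_\rho|/2$ on the initial logarithmic energy from the half-measure hypothesis; the lower bound $\ell_{H,\delta}\ge L_\delta-\log2$ on $\{v>(1-\delta)H\}$; a Chebyshev quotient tending to $1/2$ as $\delta\downarrow0$; the thin-annulus choice of $\lambda_0$; and $\epsilon_0=\delta$.

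The differences are cosmetic. The paper fixes $\delta_0\le(1-\lambda_0)^2$ at the outset, so the cutoff term becomes one more $C_RL_\delta$ contribution instead of being made small separately. Also, $|\nabla\zeta|$ is necessarily of order $((1-\lambda_0)\rho)^{-1}$ rather than $C/\rho$, and your displayed bound for $\int\ell_{H,\delta}^2$ is missing the factor $L_\delta^2$ on the right. Neither slip affects the argument, since $\delta_0$ is chosen after $\lambda_0$.
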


\begin{proof}
Take $\sigma_0=1/4$.  By the thin-annulus estimate choose
$\lambda_0\in(0,1)$ so that
$|D_\rho\setminus D_{\lambda_0\rho}|\le |D_\rho|/32$, uniformly in the
center and radius.  Choose buffered cutoffs $\eta,\zeta$ as in
\Cref{lem:root-logarithmic-estimate}, with $\eta=1$ on
$D_{\lambda_0\rho}$ and
$|\nabla\eta|+|\nabla\zeta|\le C/((1-\lambda_0)\rho)$.
Fix
\[
 0<\delta_0\le\min\{(1-\lambda_0)^2,\lambda_0^2/4,1/4\}.
\]
Set $v=(u-k)_+$ and $V=|D_\rho|$.  The initial half-measure assumption gives
$\int\eta^2\Lambda_{H,\delta}(v(t_*))\le H^2L_\delta^2V/2$.
On $\{v>(1-\delta)H\}$ one has
$\ell_{H,\delta}(v)\ge L_\delta-\log2$.  The logarithmic estimate therefore
gives, for every $t\in[t_*,t_*+\delta_0\rho^2]$,
\[
 \frac{|\{v(t)>(1-\delta)H\}\cap D_{\lambda_0\rho}|}{V}
 \le \frac{L_\delta^2/2+C_RL_\delta}
              {(L_\delta-\log2)^2}.
\]
Here $C_R$ is independent of $H,\rho,t,\delta$; the chosen time length has
absorbed the cutoff factor.  Choose $\delta\in(0,1/8)$ so small that the
last quotient is at most $5/8$, which is possible since it tends to $1/2$.
Since $|D_{\lambda_0\rho}|\ge31V/32$, the complementary low set has
measure at least $11V/32$, and hence at least
$|D_{\lambda_0\rho}|/4$.  Take $\epsilon_0=\delta$ and $\gamma_0=1/4$.
The lower-phase assertion follows from
\eqref{eq:oriented-selected-root-equation}.  The choices have the order
$\lambda_0,\delta_0,\delta$, and none depends on the oscillation amplitude.
This proves the propagation estimate of
\cite[Lemma~12.5]{BotheAdditive2026}.
\end{proof}

\subsubsection{Critical mass, shrinking, and oscillation decay}

\begin{lemma}[Critical mass from the quadratic Caccioppoli estimate]
\label{lem:root-critical-mass}
There is a constant $\nu_*=\nu_*(\mathcal D_R)>0$, depending in particular on the Caccioppoli and geometric constants, such that the following holds.  Suppose
$Q_{2\rho}^\Omega(t_0,x_0)$ lies in a positive-time strip and
$0<2\rho\le\rho_\Omega$, let
$[k,k+H]\subset[a_i,b_i]$, let $0\le (u-k)_+\le H$ there, and assume
\[
 \frac{|\{u>k\}\cap Q_{2\rho}^\Omega|}
 {|Q_{2\rho}^\Omega|}\le\nu_*.
\]
Then
\begin{equation}\label{eq:critical-mass-conclusion}
 \operatorname*{ess\,sup}_{Q_\rho^\Omega}(u-k)_+\le\frac H2.
\end{equation}
The analogous lower statement also holds.
\end{lemma}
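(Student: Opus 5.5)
We prove this by the standard De Giorgi iteration on a shrinking family of cylinders, driven by the one-sided Caccioppoli inequality of \Cref{prop:root-Caccioppoli-dossier} and the boundary parabolic Sobolev inequality \eqref{eq:boundary-parabolic-sobolev-DG}. For $n\ge0$ we set
\[
 \rho_n=\rho+2^{-n}\rho,\qquad k_n=k+\frac H2\bigl(1-2^{-n}\bigr),
\]
together with $Q_n:=Q_{\rho_n}^\Omega(t_0,x_0)$ (so $Q_0=Q_{2\rho}^\Omega$ and $Q_n\downarrow Q_\rho^\Omega$) and $r_n=(u-k_n)_+$. The goal is to show that $Y_n:=|\{u>k_n\}\cap Q_n|/|Q_n|\to0$, which gives \eqref{eq:critical-mass-conclusion}. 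The lower statement follows verbatim from the oriented equation \eqref{eq:oriented-selected-root-equation}, since the Caccioppoli inequality is stated for both signs.

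\textbf{Energy step.} We choose intermediate radii $\tilde\rho_n=(\rho_n+\rho_{n+1})/2$ and a cutoff $\eta_n$ that equals $1$ on $Q_{n+1}$, vanishes near the lower time face and the artificial lateral boundary of $\widetilde Q_n$, and satisfies $|\nabla\eta_n|^2+|\partial_t\eta_n|\le C4^n\rho^{-2}$. It may be nonzero on $\partial\Omega$, which is permitted by \eqref{eq:root-Caccioppoli-cutoff-dossier-v11}. Applying that inequality with level $k_{n+1}$ gives
\[
 \sup_t\int\eta_n^2r_{n+1}^2+\iint|\nabla(\eta_nr_{n+1})|^2
 \le C_R\frac{4^n}{\rho^2}\iint_{Q_n}r_{n+1}^2
 \le C_R\frac{4^n}{\rho^2}H^2|\{u>k_{n+1}\}\cap Q_n|,
\]
where we used $r_{n+1}\le H$. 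The transverse dissipation term is nonnegative and is simply discarded. The function $\phi=\eta_nr_{n+1}$ has the spatial support property required in \Cref{lem:uniform-local-geometry-DG}(2).

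\textbf{Sobolev and level-gap step.} On $\{u>k_{n+1}\}\cap Q_{n+1}$ we have $r_n\ge k_{n+1}-k_n=2^{-n-2}H$. Hence
\[
 (2^{-n-2}H)^2|\{u>k_{n+1}\}\cap Q_{n+1}|\le\iint_{Q_{n+1}}r_{n+1}^2 .
\]
By H\"older's inequality with exponent $(d+2)/d$, the right-hand side is at most
\[
 \Bigl(\iint|\phi|^{2(d+2)/d}\Bigr)^{d/(d+2)}|\{u>k_{n+1}\}\cap Q_n|^{2/(d+2)}.
\]
We bound the first factor by \eqref{eq:boundary-parabolic-sobolev-DG} combined with the energy step. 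This gives the recursion
\[
 Y_{n+1}\le C_*\,b^n\,Y_n^{1+2/(d+2)},\qquad b=16^{(d+4)/(d+2)}\ \text{(any fixed }b>1\text{ suffices)}.
\]
In deriving it we normalise using $|Q_n|\asymp\rho^{d+2}$, which holds by the uniform volume bounds \eqref{eq:local-volume-density-DG} because $2\rho\le\rho_\Omega$. The constant $C_*$ depends only on $\mathcal D_R$ and the geometry. Crucially, the powers of $H$ and $\rho$ cancel, which is the scale invariance that makes $\nu_*$ independent of $H$ and $\rho$.

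\textbf{Conclusion and main obstacle.} The standard fast-geometric-convergence lemma shows that $Y_n\to0$ whenever
\[
 Y_0\le C_*^{-(d+2)/2}\,b^{-(d+2)^2/4}=:\nu_* ,
\]
which proves the lemma. The main point requiring care is uniformity at boundary cylinders. Two ingredients handle this. First, the Neumann condition $\partial_\nu z_j=0$ kills the physical boundary flux in the Caccioppoli inequality. Second, the reflection-based parabolic Sobolev inequality holds uniformly for functions that do not vanish on $\partial\Omega$. Once the exponent bookkeeping is checked so that $H$ and $\rho$ drop out, no smallness beyond $\nu_*$ is needed. The hypothesis $0\le(u-k)_+\le H$ is used only to bound $r_{n+1}\le H$ and to keep all levels inside $[a_i,b_i]$, where the constants of $R$ apply.
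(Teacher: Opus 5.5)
Your scheme is the paper's De Giorgi iteration: the same radii $\rho(1+2^{-n})$ and levels $k+\frac H2(1-2^{-n})$, the one-sided Caccioppoli inequality with cutoffs allowed to be nonzero on $\partial\Omega$, the reflected parabolic Sobolev inequality \eqref{eq:boundary-parabolic-sobolev-DG}, and fast geometric convergence. However, your level-gap step has an index error, and the inequality displayed there is false. On $\{u>k_{n+1}\}\cap Q_{n+1}$, it is $r_n=(u-k_n)_+$ that is bounded below by $k_{n+1}-k_n=2^{-n-2}H$, not $r_{n+1}=(u-k_{n+1})_+$. The function $r_{n+1}$ can be arbitrarily small on that set, so $(2^{-n-2}H)^2|\{u>k_{n+1}\}\cap Q_{n+1}|\le\iint_{Q_{n+1}}r_{n+1}^2$ fails. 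The recursion for $Y_n$, as you derived it, therefore does not follow.

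The fix is to run the energy step at level $k_n$ instead of $k_{n+1}$, as the paper does. Apply the Caccioppoli inequality to $r_n$ and take $\phi=\eta_nr_n$, which is supported in $A_n=\{u>k_n\}\cap Q_n$. Then bound $(2^{-n-2}H)^2|A_{n+1}|\le\iint_{Q_{n+1}}r_n^2\le\|\phi\|_{L^{2(d+2)/d}}^2|A_n|^{2/(d+2)}$. Finally use $\|\phi\|^2_{L^{2(d+2)/d}}\le C\bigl(\operatorname*{ess\,sup}_t\int\phi^2+\iint|\nabla\phi|^2\bigr)\le C4^n\rho^{-2}H^2|A_n|$. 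Alternatively, keep level $k_{n+1}$ in the energy step but apply Chebyshev to $\{u>k_{n+2}\}\cap Q_{n+1}$; this only shifts the index of $Y_n$, and the shifted initial quantity is still at most $Y_0$.

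With this correction, your argument differs from the paper's in one minor respect. You apply Chebyshev in $L^2$ and then H\"older, which gives the exponent $1+2/(d+2)$. The paper applies Chebyshev directly in $L^{2(d+2)/d}$ and obtains $q=1+2/d$. In both versions $H$ and $\rho$ cancel after normalizing by $|Q_{2\rho}^\Omega|\asymp\rho^{d+2}$. Your threshold $\nu_*=C_*^{-(d+2)/2}b^{-(d+2)^2/4}$ is the correct one for your exponent.
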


\begin{proof}
Put $r_n=\rho(1+2^{-n})$ and
$k_n=k+\tfrac H2(1-2^{-n})$.  Let
$A_n=Q_{r_n}^\Omega\cap\{u>k_n\}$, and choose cutoffs $\eta_n$ which are
one on $Q_{r_{n+1}}^\Omega$ and supported in $Q_{r_n}^\Omega$, with the
usual bounds $|\nabla\eta_n|^2+|\partial_t\eta_n|\le C4^n/\rho^2$.
The Caccioppoli estimate applied to $(u-k_n)_+$ gives
\[
 \operatorname*{ess\,sup}_t\int|\eta_n(u-k_n)_+|^2
 +\iint|\nabla(\eta_n(u-k_n)_+)|^2
 \le C_R4^nH^2\rho^{-2}|A_n|.
\]
Use \eqref{eq:boundary-parabolic-sobolev-DG} and
$k_{n+1}-k_n=H2^{-n-2}$.  With
$Y_n=|A_n|/|Q_{2\rho}^\Omega|$ and $q=1+2/d$, this yields
\[
 Y_{n+1}\le A B^nY_n^q
\]
for structural $A,B\ge1$.  The normalization removes $H$ and $\rho$:
$dq=d+2$ and $|Q_{2\rho}^\Omega|\asymp\rho^{d+2}$.
Writing $\varepsilon=q-1$, induction gives
\[
 Y_n\le A^{-1/\varepsilon}
              B^{-n/\varepsilon-1/\varepsilon^2}
 \quad\hbox{if}\quad
 Y_0\le A^{-1/\varepsilon}B^{-1/\varepsilon^2}.
\]
This choice of $\nu_*$ forces $Y_n\to0$.  The limiting cylinder is
$Q_\rho^\Omega$ and the limiting level is $k+H/2$, proving the assertion.
This also displays the coefficient dependence of the threshold in
\cite[Lemma~12.6]{BotheAdditive2026}.
\end{proof}

\begin{lemma}[De Giorgi shrinking under a persistent opposite phase]
\label{lem:root-shrinking}
Fix $\gamma\in(0,1)$, $\delta\in(0,1)$, and a spatial buffer factor
$\Lambda>1$.  Let
$I=(s,s+\delta\rho^2)$ and suppose
\begin{equation}\label{eq:persistent-low-phase}
 |\{u(t)\le k_0\}\cap D_\rho|\ge\gamma|D_\rho|
 \qquad\text{for every }t\in I.
\end{equation}
Assume also that $D_{\Lambda\rho}$ lies in the same local chart,
\[
 [k_0,k_0+M]\subset[a_i,b_i],
\]
and
\[
 u\le k_0+M
 \qquad\text{on }I\times D_{\Lambda\rho}.
\]
Define
\[
 k_j:=k_0+M\bigl(1-2^{-j}\bigr),\qquad j=0,1,2,\ldots.
\]
Then
\begin{equation}\label{eq:shrinking-measure-estimate}
 \frac{|\{u>k_J\}\cap(I\times D_\rho)|}
 {|I\times D_\rho|}
 \le \frac{C_R(\Lambda)}{\gamma\sqrt{\delta J}},
 \qquad J\ge1.
\end{equation}
The same statement holds with the signs reversed.
\end{lemma}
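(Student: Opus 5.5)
We follow the classical De Giorgi shrinking argument, combining the one-sided Caccioppoli estimate with the relative isoperimetric inequality of \Cref{lem:uniform-local-geometry-DG}(3), applied slice by slice in time.

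\textbf{Energy bound.} For each $j$ put $r_j=(u-k_j)_+$, so that $0\le r_j\le M2^{-j}$ on $I\times D_{\Lambda\rho}$.

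\begin{itemize}
\item Apply the cutoff form \eqref{eq:root-Caccioppoli-cutoff-dossier-v11} of \Cref{prop:root-Caccioppoli-dossier}. Use a space-time cutoff $\eta$ that equals one on $I\times D_\rho$. Spatially it is supported in $D_{\Lambda\rho}$ with $|\nabla\eta|\le C/((\Lambda-1)\rho)$. In time it rises on $(s-\delta\rho^2,s)$, so that $|\partial_t\eta|\le C/(\delta\rho^2)$.
\item This requires the hypotheses, in particular $u\le k_0+M$, on a slightly earlier time interval as well. I expect this is implicit in the intended application; otherwise one works on the subinterval of $I$ after a buffer.
\item Since the right-hand side is controlled by $r_j^2\le (M2^{-j})^2$ times the cylinder volume, this gives
\[
 \iint_{I\times D_\rho}|\nabla r_j|^2\le C_R(\Lambda)\,\frac{(M2^{-j})^2}{\delta\rho^2}\,|I\times D_{\Lambda\rho}|\lesssim C_R(\Lambda)(M2^{-j})^2\rho^{d}.
\]
\item We absorb the factors $\delta^{-1}$ and $\Lambda^d$ into the constant, using $|D_{\Lambda\rho}|\lesssim\Lambda^d\rho^d$ for the volume. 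The $\delta$-dependence matters below, so track it honestly: the bound is $\iint|\nabla r_j|^2\le C(M2^{-j})^2\rho^{d-2}(\delta\rho^2+\rho^2)$, up to constants.
\end{itemize}

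\textbf{Isoperimetric step.} For a.e.\ $t\in I$, apply \eqref{eq:relative-DeGiorgi-isoperimetric} to $\varphi=u(t)$ with $h=k_j$ and $\ell=k_{j+1}$. By the persistent phase, $|\{u(t)\le k_j\}\cap D_\rho|\ge\gamma|D_\rho|$. Using $k_{j+1}-k_j=M2^{-j-1}$ we get
\[
 M2^{-j-1}\gamma|D_\rho|\,|\{u(t)>k_{j+1}\}\cap D_\rho|\le C\rho^{d+1}\int_{D_\rho\cap\{k_j<u<k_{j+1}\}}|\nabla u|.
\]
Set $A_j=\{u>k_j\}\cap(I\times D_\rho)$ and integrate in $t$. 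Cauchy--Schwarz, the energy bound, and $|D_\rho|\asymp\rho^d$ give
\[
 M2^{-j}\gamma|A_{j+1}|\le C\rho\Big(\iint|\nabla r_j|^2\Big)^{1/2}|A_j\setminus A_{j+1}|^{1/2}\le C_R(\Lambda)\,M2^{-j}\rho^{d/2}|A_j\setminus A_{j+1}|^{1/2}.
\]
Cancelling $M2^{-j}$ and squaring yields
\[
 |A_{j+1}|^2\le C\gamma^{-2}\rho^{d}|A_j\setminus A_{j+1}|.
\]

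\textbf{Summation.} The sets $A_j\setminus A_{j+1}$ are disjoint subsets of $I\times D_\rho$, so summing over $j=0,\dots,J-1$ gives
\[
 \sum_j|A_j\setminus A_{j+1}|\le\delta\rho^{d+2}.
\]
Since the $|A_j|$ are nonincreasing,
\[
 J|A_J|^2\le C\gamma^{-2}\rho^{d}\cdot\delta\rho^{d+2}.
\]
Dividing by $|I\times D_\rho|^2\asymp\delta^2\rho^{2d+4}$ gives the ratio $\le C\gamma^{-1}(\delta J)^{-1/2}\rho^{-1}\cdot\rho$. Here the factor $\rho$ from the energy bound's $\rho^{d}$, with $\rho^{-2}$ times the cylinder volume $\rho^{d+2}$, balances the scaling, and this is exactly \eqref{eq:shrinking-measure-estimate}. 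For the reversed signs, use the oriented equation \eqref{eq:oriented-selected-root-equation}.

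\textbf{Main obstacle.} The delicate point is the $\delta$-bookkeeping in the energy bound. The time cutoff contributes $|\partial_t\eta|\sim(\delta\rho^2)^{-1}$ against a time length $\delta\rho^2$, which keeps the energy at order $M^2 4^{-j}\rho^d$ independent of $\delta$, while the lateral term contributes order $\delta$. This $\delta$-independence of the energy bound is what produces the $1/\sqrt{\delta}$ in the final estimate. I would verify carefully that the cutoff in time can be placed before $s$ (or absorbed by a buffer), and that the boundary patches are covered uniformly by \Cref{lem:uniform-local-geometry-DG}.
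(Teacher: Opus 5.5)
Your structure matches the paper's: a level energy of order $(M2^{-j})^2\rho^d$ that does not depend on $\delta$, the slice-wise relative isoperimetric inequality \eqref{eq:relative-DeGiorgi-isoperimetric} against the persistent low set, Cauchy--Schwarz, and telescoping over the disjoint layers. However, your energy step does not prove the lemma as stated.

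A time cutoff rising on $(s-\delta\rho^2,s)$ needs the strong solution and its range in $R$ on $(s-\delta\rho^2,s)\times D_{\Lambda\rho}$. Above all it needs the amplitude bound $u\le k_0+M$ there, and the lemma assumes none of this. Without the amplitude bound before $s$, the term $r_j^2|\partial_t\eta|$ is only of order $(b_i-a_i)^2\rho^d$. The factor $(M2^{-j})^2$ is then lost, and the cancellation of $M2^{-j}$ in the isoperimetric step fails. Your fallback, a buffer inside $I$, gives the estimate only on a later portion of $I$, not on $I\times D_\rho$.

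The missing observation is that no time cutoff is needed. Apply \Cref{lem:two-time-stabilized-energy} on $(s_n,t)$ with $s<s_n<t<s+\delta\rho^2$, using a purely spatial cutoff equal to one on $D_\rho$ and supported in $D_{\Lambda\rho}$. Because $s_n\in I$, the amplitude bound and $\Psi\le C_Rr^2$ from \eqref{eq:stabilized-flux-bounds} bound the initial slice energy by $C_R(\Lambda)(M2^{-j})^2\rho^d$. This is the same size your time-cutoff term had, while the lateral term is $C_R(\Lambda)(M2^{-j})^2\delta\rho^d$. Letting $s_n\downarrow s$ and then $t\uparrow s+\delta\rho^2$ gives $\iint_{I\times D_\rho}|\nabla(u-k_j)_+|^2\le C_R(\Lambda)(M2^{-j})^2\rho^d$, with no trace at $s$ and no solution before $s$. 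In the application inside \Cref{prop:root-oscillation-reduction} your version would happen to work, since $Q_{4\rho}^\Omega$ reaches back before $t_*$ and $u\le\mu^+$ holds there. The lemma's hypotheses do not supply this, though.

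There is also an arithmetic slip. Inserting the energy bound into $C\rho\,(\iint|\nabla r_j|^2)^{1/2}$ produces $C_R(\Lambda)M2^{-j}\rho^{d/2+1}$, not $\rho^{d/2}$. Hence $|A_{j+1}|^2\le C_R(\Lambda)\gamma^{-2}\rho^{d+2}|A_j\setminus A_{j+1}|$. Summing gives $J|A_J|^2\le C_R(\Lambda)\gamma^{-2}\rho^{d+2}|I\times D_\rho|$, and $|I\times D_\rho|\asymp\delta\rho^{d+2}$ then yields \eqref{eq:shrinking-measure-estimate} directly. The stray $\rho^{-1}$ that you cancelled by hand is just this dropped factor.
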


\begin{proof}
Write $Q=I\times D_\rho$, $H_j=M2^{-j}$,
$E_j=Q\cap\{k_j<u<k_{j+1}\}$, and $A_j=Q\cap\{u>k_j\}$.
Choose a spatial cutoff which is one on $D_\rho$, supported in
$D_{\Lambda\rho}$, and has gradient bounded by $C_\Lambda/\rho$.
Apply the two-time estimate on $(s_n,t)$, where $s_n\downarrow s$ and
$s_n<t<s+\delta\rho^2$.  The amplitude bound gives an initial energy
bounded by $C_RH_j^2\rho^d$, independently of $n$, while the cutoff
term is bounded by $C_R(\Lambda)H_j^2\delta\rho^d$.
Letting $s_n\downarrow s$ and then $t\uparrow s+\delta\rho^2$ yields
\[
 \iint_Q|\nabla(u-k_j)_+|^2
 \le C_R(\Lambda)H_j^2\rho^d.
\]
This argument does not require an initial trace at $s$ or a solution on an
earlier time interval.

Apply \eqref{eq:relative-DeGiorgi-isoperimetric} at each time to the levels
$k_j,k_{j+1}$.  The persistent low set is contained in $\{u\le k_j\}$,
so, after integrating in time and using Cauchy--Schwarz,
\[
 |A_{j+1}|
 \le \frac{C_R\rho}{\gamma H_j}
       \left(\iint_Q|\nabla(u-k_j)_+|^2\right)^{1/2}|E_j|^{1/2},
 \qquad
 |A_{j+1}|^2\le
       \frac{C_R(\Lambda)}{\gamma^2}\rho^{d+2}|E_j|.
\]
The sets $E_j$ are disjoint and $A_J\subset A_{j+1}$ for $j<J$.
Summation and $|Q|\asymp\delta\rho^{d+2}$ give
\[
 J|A_J|^2\le
  \frac{C_R(\Lambda)}{\gamma^2}\rho^{d+2}|Q|,
\]
which is \eqref{eq:shrinking-measure-estimate}.  The argument is the
scalar shrinking step of \cite[Lemma~12.7]{BotheAdditive2026}; the lower
phase uses the already established lower truncations.
\end{proof}

\begin{proposition}[One-step oscillation reduction]
\label{prop:root-oscillation-reduction}
There are structural constants $\vartheta_*,\eta_*\in(0,1)$ and
$\rho_*>0$ such that the following holds.  Let $u=z_i$ be any selected root
and let $Q_{4\rho}^\Omega(t_0,x_0)$ be any interior or boundary cylinder
contained in a positive-time strip, with $0<\rho\le\rho_*$.  The radius
$\rho_*$ is chosen so that $4\rho_*\le\rho_\Omega$, hence every spatial set
used below belongs to one of the uniform local-geometry charts.  Then
\begin{equation}\label{eq:one-step-oscillation-reduction}
 \operatorname*{ess\,osc}_{Q_{\vartheta_*\rho}^\Omega(t_0,x_0)}u
 \le(1-\eta_*)
 \operatorname*{ess\,osc}_{Q_{4\rho}^\Omega(t_0,x_0)}u.
\end{equation}
The constants are independent of the distance of $t_0$ to a finite upper
time face.
\end{proposition}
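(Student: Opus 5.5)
The plan is the standard De Giorgi alternative argument, assembled from \Cref{lem:root-measure-propagation}, \Cref{lem:root-critical-mass}, and \Cref{lem:root-shrinking}. Put $\omega:=\operatorname*{ess\,osc}_{Q_{4\rho}^\Omega}u$, $\mu^-:=\operatorname*{ess\,inf}u$, $\mu^+:=\operatorname*{ess\,sup}u$ over $Q_{4\rho}^\Omega$, and $k_0:=\mu^-+\omega/2$. All levels lie in $[\mu^-,\mu^+]\subset[a_i,b_i]$ because the range lies in the invariant root rectangle $R$ of \Cref{thm:root-rectangle}, so every lemma applies with $H,M\le\omega$. The amplitude-independence recorded in those lemmas makes all thresholds independent of $\omega$.

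First I fix a starting slice $t_*:=t_0-\delta_0\rho^2$, using the constant $\delta_0$ of \Cref{lem:root-measure-propagation}, shrunk if necessary so that $\delta_0\le1$ and $Q_{4\rho}$ contains all the cylinders used. At $t_*$, one of the two sets $\{u(t_*)\le k_0\}\cap D_\rho$ and $\{u(t_*)\ge k_0\}\cap D_\rho$ has measure at least $\tfrac12|D_\rho|$. By the orientation symmetry \eqref{eq:oriented-selected-root-equation} I may assume it is the low set. \Cref{lem:root-measure-propagation}, applied with $k=k_0$ and $H=\omega/2$ (so $u\le k+H=\mu^+$), then gives a persistent low phase for every $t\in I:=(t_*,t_0)$:
\[
 |\{u(t)\le k_0+(1-\epsilon_0)\omega/2\}\cap D_{\lambda_0\rho}|\ge\gamma_0|D_{\lambda_0\rho}|.
\]
This slice choice avoids a case split over all times. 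Since the propagation lemma needs only one slice, the argument is uniform in the distance to the upper time face.

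Next I apply \Cref{lem:root-shrinking} on $I\times D_{\lambda_0\rho}$. I take the base level $k_0':=k_0+(1-\epsilon_0)\omega/2$, the amplitude $M:=\mu^+-k_0'\ge\epsilon_0\omega/2$, the fraction $\gamma=\gamma_0$, the time length $\delta=\delta_0/\lambda_0^2$ relative to radius $\lambda_0\rho$, and a buffer $\Lambda=4/\lambda_0$. Let $\nu_*$ be the critical-mass threshold of \Cref{lem:root-critical-mass}, adjusted for the ratio between the cylinder $I\times D_{\lambda_0\rho}$ and the parabolic cylinder $Q^\Omega_{2\rho'}$ with $2\rho'=\lambda_0\sqrt{\delta}\rho$ (or simply with $\rho':=\lambda_0\rho/2$ and $\delta_0$ adjusted accordingly) that ends at $t_0$. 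I choose $J$ structurally with $C_R(\Lambda)/(\gamma_0\sqrt{\delta J})\le\nu_*$, which makes the superlevel set $\{u>k_J\}$ relatively small there.

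\Cref{lem:root-critical-mass}, with $k=k_J$ and $H=M2^{-J}$, then gives $u\le\mu^+-M2^{-J-1}\le\mu^+-\epsilon_0\omega2^{-J-2}$ on $Q^\Omega_{\rho'}(t_0,x_0)$. Hence the oscillation there is at most $(1-\eta_*)\omega$ with $\eta_*=\epsilon_0 2^{-J-2}$, and I take $\vartheta_*=\rho'/\rho$. The high-phase case is handled symmetrically.

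The main obstacle is the bookkeeping of geometry. The parabolic scaling of the cylinders produced by propagation (time length $\delta_0\rho^2$, radius $\lambda_0\rho$) must be nested correctly inside the shrinking and critical-mass lemmas. All radii must stay below $\rho_\Omega$, which is why $4\rho_*\le\rho_\Omega$, and the cylinders must end exactly at $t_0$. I would handle this by fixing $\lambda_0,\delta_0$ first, then $\delta$, then $\Lambda$, then $J$, and finally $\vartheta_*$. Each constant depends only on $\mathcal D_R$, so the resulting $\vartheta_*,\eta_*,\rho_*$ are structural.
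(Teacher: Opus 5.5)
Your proposal is correct and follows the paper's proof essentially step by step: the midpoint alternative at the slice $t_0-\delta_0\rho^2$, propagation of a low fraction to the level $\mu^+-\epsilon_0\omega/2$, shrinking on $I\times D_{\lambda_0\rho}$ with aspect ratio $\delta_0/\lambda_0^2$, a structural $J$ chosen against the volume-adjusted critical-mass threshold, and $\eta_*=\epsilon_0 2^{-J-2}$. The paper uses the buffer $D_\rho$ and the radius $\rho_c=\tfrac18\min\{\lambda_0,\sqrt{\delta_0}\}\rho$; your $\Lambda=4/\lambda_0$ and $2\rho'=\sqrt{\delta_0}\rho$ work equally well, but drop the parenthetical alternative $\rho'=\lambda_0\rho/2$, since it would require $\delta_0\ge\lambda_0^2$, which contradicts $\delta=\delta_0/\lambda_0^2<1$ in \Cref{lem:root-shrinking} and the smallness of $\delta_0$ in \Cref{lem:root-measure-propagation}.
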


\begin{proof}
Let $\mu^+,\mu^-$ be the essential extrema on $Q_{4\rho}^\Omega$ and set
$\omega=\mu^+-\mu^-$.  The case $\omega=0$ is immediate.  At
$t_*=t_0-\delta_0\rho^2$, one side of the midpoint
$(\mu^++\mu^-)/2$ occupies at least half of $D_\rho$.  Suppose first it
is the lower side.  Apply \Cref{lem:root-measure-propagation} with
$k=(\mu^++\mu^-)/2$ and $H=\omega/2$.  Throughout
$I=(t_*,t_0)$ the level
\[
 k_0=\mu^+-M,\qquad M=\epsilon_0\omega/2,
\]
has a low set of fraction at least $\gamma_0$ in
$D_{\lambda_0\rho}$.  The shrinking lemma applies there with spatial
buffer $D_\rho$, aspect ratio $\delta_0/\lambda_0^2\in(0,1)$, and upper
bound $\mu^+$.

Put
\[
 \rho_c=\tfrac18\min\{\lambda_0,\sqrt{\delta_0}\}\rho.
\]
Then $Q_{2\rho_c}^\Omega\subset I\times D_{\lambda_0\rho}$, and the
ratio of its volume to that of the terminal slab is bounded below by a
structural $\chi_*>0$.  Shrinking gives a structural $C_*$ such that
\[
 \frac{|\{u>\mu^+-M2^{-J}\}\cap(I\times D_{\lambda_0\rho})|}
      {|I\times D_{\lambda_0\rho}|}\le C_*J^{-1/2}.
\]
Choose a fixed integer $J$ with $C_*J^{-1/2}\le\nu_*\chi_*$.  The
high-set fraction in $Q_{2\rho_c}^\Omega$ is then at most $\nu_*$, so
\Cref{lem:root-critical-mass} implies
\[
 \operatorname*{ess\,sup}_{Q_{\rho_c}^\Omega}u
 \le\mu^+-M2^{-J-1}.
\]
Thus \eqref{eq:one-step-oscillation-reduction} holds with
$\vartheta_*=\rho_c/\rho$ and $\eta_*=\epsilon_0 2^{-J-2}$.
The other midpoint alternative uses the lower versions of the same three
lemmas.  All levels stay in $[\mu^-,\mu^+]\subset[a_i,b_i]$.
Every cylinder is backward from $t_0$, and the volume conversion uses a
fixed ratio, not an amplitude-dependent one.  This proves the oscillation estimate corresponding to \cite[Proposition~12.8]{BotheAdditive2026}.
\end{proof}

{
\begin{lemma}[Oscillation decay implies terminal boundary H\"older control]
\label{lem:oscillation-to-terminal-holder}
Let $u$ be continuous on $[\tau,T)\times\overline\Omega$ and satisfy
$|u|\le M_R$ there.  Assume that there are $q,\sigma\in(0,1)$ and $r_*>0$
such that, whenever $x_0\in\overline\Omega$, $0<r\le r_*$ and
$(t_0-r^2,t_0)\subset(\tau,T)$,
\begin{equation}\label{eq:abstract-oscillation-contraction-blue}
 \operatorname*{osc}_{Q_{qr}^\Omega(t_0,x_0)}u
 \le \sigma\,
 \operatorname*{osc}_{Q_r^\Omega(t_0,x_0)}u .
\end{equation}
Assume also the uniform boundary-volume and local-geometry bounds of
\Cref{lem:uniform-local-geometry-DG}.  Then, for every $\tau_1>\tau$, there
are $\theta\in(0,1)$ and $C<\infty$, depending only on
$q,\sigma,r_*,M_R,\tau_1-\tau$ and the uniform geometry, such that
\begin{equation}\label{eq:abstract-terminal-holder-blue}
 |u(t,x)-u(s,y)|
 \le C\bigl(|x-y|+|t-s|^{1/2}\bigr)^\theta
\end{equation}
for all $(t,x),(s,y)\in[\tau_1,T)\times\overline\Omega$.  If $T<\infty$,
the constant is independent of the distances of $t$ and $s$ from the upper
time face $T$.
\end{lemma}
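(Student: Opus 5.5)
The plan is the standard iteration-plus-covering argument. First I iterate \eqref{eq:abstract-oscillation-contraction-blue} on nested backward cylinders. Fix $(t_0,x_0)$ with $t_0\ge\tau_1$, and put $r_0:=\min\{r_*,(\tau_1-\tau)^{1/2}\}$, so that $(t_0-r_0^2,t_0)\subset(\tau,T)$ for every such $t_0$. Since $Q_{q^nr_0}^\Omega\subset Q_{r_0}^\Omega$, the hypothesis applies at every radius $q^nr_0$, and induction gives
\[
 \operatorname*{osc}_{Q_{q^nr_0}^\Omega(t_0,x_0)}u\le 2M_R\sigma^n .
\]
For $0<r\le r_0$ I choose $n$ with $q^{n+1}r_0<r\le q^nr_0$. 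Setting $\theta_0:=\log\sigma/\log q>0$, this yields
\[
 \operatorname*{osc}_{Q_r^\Omega(t_0,x_0)}u\le C(r/r_0)^{\theta_0},
 \qquad C=2M_R\sigma^{-1}.
\]
The constant depends only on $q,\sigma,M_R,r_*,\tau_1-\tau$. The contraction never uses the distance to $T$, because all cylinders are backward from $t_0$.

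Next I convert the cylinder oscillation bound into the two-point estimate. Take $(t,x),(s,y)$ in $[\tau_1,T)\times\overline\Omega$ with $s\le t$, and set $r:=|x-y|+|t-s|^{1/2}$.

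\emph{Case $r<r_0$.} Take $r'=2r$ if $2r\le r_0$. Then $(s,y)\in Q_{r'}^\Omega(t,x)$, since $|x-y|<r'$ and $t-r'^2<s\le t$; at $s=t$ I use continuity of $u$ to include the top face. Hence $|u(t,x)-u(s,y)|\le C r^{\theta_0}$.

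\emph{Case $r\ge r_0/2$.} Here I use the trivial bound $|u(t,x)-u(s,y)|\le 2M_R\le 2M_R(2r/r_0)^{\theta_0}$.

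So $\theta=\theta_0$ works. Cylinders are taken intersected with $\Omega$ and the points lie in $\overline\Omega$, so nothing special happens at the physical boundary. Continuity of $u$ up to $\overline\Omega$ lets me pass from essential oscillation to pointwise values; the uniform geometry of \Cref{lem:uniform-local-geometry-DG} only guarantees $D_r(x_0)\ni y$ is nonempty and within a chart.

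The main point to get right is the admissibility of cylinders near the lower time face $\tau$: the hypothesis needs $(t_0-r^2,t_0)\subset(\tau,T)$. This is why the gap $\tau_1-\tau$ enters through $r_0$, and why no condition is needed near $T$. Note the statement's $\sigma$ here is an abstract contraction factor, not the coefficient $\sigma(x)$. In the application I take $q=\vartheta_*/4$ and $\sigma=1-\eta_*$ from \Cref{prop:root-oscillation-reduction}, and $r_*=4\rho_*$.
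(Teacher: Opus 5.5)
Your proposal is correct and follows essentially the paper's argument: you iterate the contraction on backward cylinders from a radius $r_0$ limited by $r_*$ and $\tau_1-\tau$, compare with the nearest radius $q^nr_0$, and then compare two points via the cylinder of radius $2(|x-y|+|t-s|^{1/2})$ centered at the later point, using continuity for the physical boundary and the upper face and boundedness for large separations. The one adjustment needed is that $\theta_0=\log\sigma/\log q$ is $\ge1$ whenever $\sigma\le q$, so to obtain $\theta\in(0,1)$ as stated you should take $\theta=\min\{\theta_0,1/2\}$, as the paper does; this is harmless because $(r/r_0)^{\theta_0}\le(r/r_0)^{\theta}$ for $r\le r_0$.
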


\begin{proof}
Choose
\[
 0<r_0\le\min\{r_*,\sqrt{(\tau_1-\tau)/2}\},\qquad
 \theta=\min\left\{\tfrac12,
             \frac{\log(1/\sigma)}{\log(1/q)}\right\}>0.
\]
Iteration and comparison with the nearest radius $q^nr_0$ give
\[
 \operatorname*{osc}_{Q_r^\Omega(t,x)}u
 \le C M_R(r/r_0)^\theta,
 \qquad 0<r\le r_0,\quad t\ge\tau_1.
\]
For two points, suppose $t\ge s$ and put
$\ell=|x-y|+\sqrt{t-s}$.  If $0<2\ell<r_0$, both points belong to the
closure of $Q_{2\ell}^\Omega(t,x)$.  Continuity extends the oscillation
bound to its physical boundary and upper time face, and gives the desired
estimate.  For $2\ell\ge r_0$, boundedness suffices.  Equal points cause
no difficulty.  The comparison cylinder ends at the later time and never
uses a value at or after $T$; this is the terminal-time feature of
\cite[Lemma~12.9]{BotheAdditive2026}.
\end{proof}
}

\begin{theorem}[De Giorgi regularity of the roots]
\label{thm:DG}
Let $\Omega\subset\mathbb R^d$ be a bounded domain of class
$C^2$, and fix $p>d+2$.  Let $z$ be a positive-time strong root solution: on
every compact strip $I\Subset(\tau,T)$ it belongs to
\[
 W^{1,p}(I;L^p(\Omega;\mathbb R^{N-1}))
 \cap L^p(I;W^{2,p}(\Omega;\mathbb R^{N-1}))
 \cap C(\overline I;B_{p,p}^{2-2/p}(\Omega;\mathbb R^{N-1})),
\]
satisfies the distributional Stieltjes root system, and has homogeneous
Neumann trace.  Assume that its range on
$[\tau,T)\times\overline\Omega$ lies in a compact root rectangle $R$.
Then there are
$\theta=\theta(\mathcal D_R)\in(0,1)$ and, for every $\tau_1>\tau$, a
constant $C=C(\mathcal D_R,\tau_1-\tau)<\infty$ such that
\begin{equation}\label{eq:root-holder}
 \|z\|_{C_{t,x}^{\theta/2,\theta}
 ([\tau_1,T)\times\overline\Omega)}\le C.
\end{equation}
If $T<\infty$, the constant is independent of the distance to the upper time
face $T$.
\end{theorem}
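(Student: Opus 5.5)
The plan is to assemble the scalar ingredients of \Cref{sec:DG} componentwise and then invoke \Cref{lem:oscillation-to-terminal-holder}. First, fix a selected index $i$ and put $u=z_i$. By the hypotheses and \Cref{lem:root-strong-transfer}-type reasoning, $z$ is bounded and continuous on every compact positive-time strip, with $\partial_\nu z=0$. Its range lies in $R$, so \Cref{lem:coefficient-ledger} supplies all coefficient bounds. Hence the strong root chain rule of \Cref{lem:chain} applies to the stabilized one-sided potentials of \Cref{prop:stabilized}. This gives the Caccioppoli inequality \Cref{prop:root-Caccioppoli-dossier}, the two-time energy \Cref{lem:two-time-stabilized-energy}, and the logarithmic estimate \Cref{lem:root-logarithmic-estimate} on every interior or boundary cylinder. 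All constants depend only on $\mathcal D_R$, not on the strong norms of $z$ and not on $T$. The strong regularity enters only qualitatively, to justify the tests.

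Second, I will apply \Cref{prop:root-oscillation-reduction}. For every $x_0\in\overline\Omega$ and $0<\rho\le\rho_*$ with $Q_{4\rho}^\Omega(t_0,x_0)\subset(\tau,T)\times\Omega$ (relative to $\overline\Omega$), it gives
\[
 \operatorname*{osc}_{Q_{\vartheta_*\rho}^\Omega}u\le(1-\eta_*)\operatorname*{osc}_{Q_{4\rho}^\Omega}u.
\]
Renaming $r=4\rho$ yields the hypothesis \eqref{eq:abstract-oscillation-contraction-blue} with $q=\vartheta_*/4$, $\sigma=1-\eta_*$ and $r_*=4\rho_*$. Essential oscillation equals oscillation because $u$ is continuous. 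The constants are structural and independent of the distance of $t_0$ to $T$, since every cylinder is backward in time and the proposition uses only data in $(t_0-16\rho^2,t_0)$. The bound $|u|\le M_R$ holds with $M_R=\max_j\max(|a_j|,|b_j|)$. \Cref{lem:oscillation-to-terminal-holder} then gives, for each $\tau_1>\tau$, the bound \eqref{eq:abstract-terminal-holder-blue} with exponent $\theta_i$ and constant $C_i$ depending on $\mathcal D_R$ and $\tau_1-\tau$.

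Finally, I take $\theta=\min_i\theta_i$; in fact all $\theta_i$ coincide, since the constants are uniform in $i$. Because $(|x-y|+|t-s|^{1/2})^{\theta_i}\le C(\operatorname{diam}\Omega+1)^{\theta_i-\theta}(|x-y|+|t-s|^{1/2})^\theta$ for bounded separations, the estimate holds with exponent $\theta$ for every component. Large time separations are handled by boundedness. Together with the sup bound, this gives \eqref{eq:root-holder}, and the constant inherits independence of the upper time face from \Cref{lem:oscillation-to-terminal-holder}.

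The main obstacle is confirming that the oscillation reduction is truly uniform near the physical boundary and near $T$. This requires checking that every cylinder used in \Cref{prop:root-oscillation-reduction} lies in one chart ($4\rho_*\le\rho_\Omega$) and stays within $(\tau,T)$ for $t_0\ge\tau_1$ and $r\le r_0\le\sqrt{(\tau_1-\tau)/2}$. It also requires that the Neumann condition removes all physical boundary terms in the energy estimates. The paper asserts both points in the cited lemmas, so in the proof they only need to be invoked with matching parameters.
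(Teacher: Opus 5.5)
Your proposal is correct and follows the paper's proof essentially verbatim: the same parameter matching $q=\vartheta_*/4$, $\sigma=1-\eta_*$, $r_*=4\rho_*$ with $\rho=r/4$, continuity to identify essential and ordinary oscillation, and backward cylinders for uniformity at $T$. The one adjustment you should adopt is the paper's: \Cref{lem:oscillation-to-terminal-holder} assumes continuity on the closed tail $[\tau,T)\times\overline\Omega$, whereas the hypotheses give it only on compact strips inside $(\tau,T)$, so the lemma is applied with initial time $\tau'=(\tau+\tau_1)/2$, which merely halves the waiting-time gap in the constant.
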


{
\begin{proof}
The coefficient-dependent chain identities, stabilized energies, and scalar
iteration have been established above by the argument of
\cite[Sections~11--12]{BotheAdditive2026}.  To match the cylinder convention
in the terminal lemma explicitly, set
\[
 q=\vartheta_*/4,\qquad \sigma=1-\eta_*,\qquad r_*=4\rho_*.
\]
Apply \Cref{prop:root-oscillation-reduction} with $\rho=r/4$.
For every root this gives exactly
\eqref{eq:abstract-oscillation-contraction-blue} on $Q_r^\Omega$.
The strong-solution trace embedding makes the roots continuous on every
compact positive-time strip, so essential and ordinary oscillations agree.
Given $\tau_1>\tau$, apply \Cref{lem:oscillation-to-terminal-holder}
with initial tail time $\tau'=(\tau+\tau_1)/2$.  Continuity at $\tau'$
is available from the strong class; no trace at $\tau$ or at $T$ is needed.
This yields \eqref{eq:root-holder}, with the asserted structural dependence
and waiting-time gap.  All comparison cylinders end at the later time and
use only earlier values, proving uniformity at a finite upper time face.
\end{proof}
}

\begin{remark}[Dependence of the oscillation constants]
\label{rem:DG-constant-ledger}
Apart from the auxiliary parameters explicitly specified in each statement, the constants in the Caccioppoli, logarithmic, propagation, shrinking, critical-mass, and oscillation estimates depend only on $\mathcal D_R$.  In particular, this dependence includes the positive ellipticity lower bound $d_*$, the upper coefficient bounds, and the root-gap and entropy bounds supplied by \Cref{lem:coefficient-ledger}.  The H\"older exponent in \Cref{thm:DG} inherits the coefficient dependence of the oscillation contraction, while the H\"older norm also depends on the positive waiting-time gap.  These constants do \emph{not} depend on $H$, on the oscillation size, on the chosen level, on the time location of the cylinder, or on any a-priori $C^1$ norm of the strong solution.  The latter regularity is used only to
justify the strong chain and testing identities on compact positive-time
strips.
The time-propagation input in the oscillation argument is
\Cref{lem:root-logarithmic-estimate}.  Its logarithmic temporal energy comes
directly from the scalar root equation, while the cross-root drift is
controlled by the additional stabilized dissipation
$r^2\sum_{j\ne i}|\nabla z_j|^2$.
\end{remark}

\section{Continuation, positive-time regularity, and relaxation}
\label{sec:continuation-block}
\subsection{Affine-moment conormal regularity}
\label{sec:conormal}
The continuation argument starts from the root H\"older estimate.  Passing to the affine moment system and freezing the entropy symmetrizer yields a locally uniformly strongly parabolic system; conormal Schauder estimates then give spatial Lipschitz control, and terminal maximal regularity provides the trace bound needed for continuation.

By Theorem~\ref{thm:root-rectangle}, the concentration range lies in a compact
$\cK\Subset\cS^\circ$.  The root map, the inverse root map, and the affine
moment map are smooth there.  For any $0<\tau<T$, apply Theorem~\ref{thm:DG} on the earlier strip $[\tau/2,T)$ with output beginning at $\tau$.  For the corresponding compact moment range $\mathcal K_M\Subset\mathcal T^\circ$ this yields
\begin{equation}
 \|U\|_{C_{t,x}^{\theta/2,\theta}([\tau,T)\times\overline\Omega)}\le C.
 \label{eq:u-holder}
\end{equation}
By \Cref{rem:DG-constant-ledger}, one may take
$C=C(\mathcal D_R,\tau)$ in \eqref{eq:u-holder}, uniformly in $T$; in
particular, this constant is independent of
$\sup_{t\in[\tau,T)}\|\nabla u(t)\|_{L^\infty(\Omega)}$.
This is the same transition from spectral roots to affine system coordinates
as in the additive-friction proof; see
\cite[Section~13]{BotheAdditive2026}.

\begin{lemma}[Uniform local frozen symmetrizer]\label{lem:frozen-symmetrizer}
There exist $\varepsilon_*>0$ and $\nu_*>0$, depending only on the compact moment range $\mathcal K_M$ and the structural data $\mathcal D_R$ of the fixed root rectangle, such that
\[
 \sym\bigl(H_M(U_*)A_M(U)\bigr)\ge \nu_* I
\]
whenever $U,U_*\in\mathcal K_M$ and $|U-U_*|\le\varepsilon_*$.  The constants are uniform in the frozen state $U_*$.
\end{lemma}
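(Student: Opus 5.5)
The plan is a compactness-and-continuity argument around the diagonal $U=U_*$.

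First, the diagonal case. By \eqref{eq:core-moment-symmetrizer}, $H_M(U)A_M(U)=\mathsf R^{\sf T}H(u(U))C(u(U))\mathsf R$ is symmetric. It is positive definite on the open moment simplex by \Cref{prop:HC}. More quantitatively, \eqref{eq:HCcoercive} gives
\[
 \xi^{\sf T}HC\xi\ge \frac{d_*}{\max_i x_i}|\xi|^2\ge d_*|\xi|^2 ,
\]
using $d_i\ge d_*$ and $x_i\le1$. Since $\mathsf R$ is a fixed invertible matrix, this yields
\[
 H_M(U)A_M(U)\ge d_*\,|\mathsf L|^{-2}\,I=:2\nu_*I
 \qquad\text{for all }U\in\mathcal K_M .
\]
This lower bound is uniform. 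It does not even use compactness, only the lower bound $d_*$ from \eqref{eq:darken-uniform-bounds}.

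Second, the off-diagonal perturbation. Write
\[
 H_M(U_*)A_M(U)=H_M(U)A_M(U)+\bigl(H_M(U_*)-H_M(U)\bigr)A_M(U).
\]
Since $\mathcal K_M$ is compact in the open simplex $\mathcal T^\circ$, its preimage lies in a compact subset of $\cS^\circ$ with $x_i\ge\kappa>0$. On such a set $H(u)$ is smooth, with Lipschitz constant bounded in terms of $\kappa^{-2}$ through \eqref{eq:H}. Hence $H_M=\mathsf R^{\sf T}H\mathsf R$ is Lipschitz on $\mathcal K_M$ with a constant $L_H$, provided we first pass to a slightly enlarged compact convex neighborhood so the segment between $U$ and $U_*$ stays where the bound holds. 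Alternatively one can use uniform continuity on the compact set directly and avoid needing a segment. In addition, $|A_M(U)|\le C_A$ on $\mathcal K_M$, by continuity of $C$ and the bound $d_i\le d^*$. So the symmetric part of the error term has norm at most $L_H C_A|U-U_*|$. Choosing
\[
 \varepsilon_*=\nu_*/(L_HC_A)
\]
(or the corresponding uniform-continuity modulus) gives $\sym(H_M(U_*)A_M(U))\ge\nu_*I$.

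The constants depend only on $\mathcal K_M$ through $\kappa$, on $\delta$ through $\mathsf L$, and on $d_*,d^*$ and $\sup|C|$, so they are controlled by $\mathcal D_R$ and $\mathcal K_M$ and are uniform in the frozen state $U_*$. The only delicate point is the modulus of continuity of $H_M$. It is finite precisely because $\mathcal K_M$ stays away from the simplex boundary, which is where the Fisher Hessian blows up. This separation is supplied by \Cref{thm:root-rectangle} and \Cref{cor:species-separation}; no other step presents an obstacle.
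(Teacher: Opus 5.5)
Your proposal is correct and follows the paper's compactness-and-continuity argument. Two details differ. You perturb the symmetrizer factor $H_M$, using boundedness of $A_M$, whereas the paper perturbs $A_M$, using boundedness of $H_M(U_*)$. You also get the diagonal lower bound explicitly as $d_*|\mathsf L|^{-2}$ from \eqref{eq:HCcoercive}, where the paper takes the minimum of the smallest eigenvalue over the compact set. Either split works equally well, and your explicit constant shows that only the perturbation step uses the separation from the simplex boundary.
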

\begin{proof}
At $U=U_*$, \eqref{eq:core-moment-symmetrizer} is symmetric positive definite.  Its smallest eigenvalue has a positive minimum on the compact set $\mathcal K_M$.  The matrices $H_M(U_*)$ are uniformly bounded there and $A_M$ is uniformly continuous, so the symmetric part remains positive on one common state-space ball.  This is the frozen-entropy argument used in \cite[Lemma~13.1]{BotheAdditive2026}.
\end{proof}

Fix $U_*\in\mathcal K_M$ and set
\[
 H_*=H_M(U_*),\qquad S_*=H_*^{1/2}.
\]
By \Cref{lem:frozen-symmetrizer}, after restricting to a cylinder on which
$|U-U_*|\le\varepsilon_*$, one has
\begin{equation}
 \sym(H_*A_M(U))\ge\nu_* I.
 \label{eq:frozen-sym}
\end{equation}
The H\"older modulus \eqref{eq:u-holder} determines a cylinder radius with this property uniformly along the trajectory.

Set
\[
 v=S_*(U-U_*).
\]
Then
\begin{equation}
 v_t-\Div\bigl(S_*A_M(U)S_*^{-1}\nabla v\bigr)=0.
 \label{eq:v-system}
\end{equation}
Furthermore
\[
 \sym(S_*A_M(U)S_*^{-1})
 =S_*^{-1}\sym(H_*A_M(U))S_*^{-1},
\]
so \eqref{eq:v-system} is uniformly strongly parabolic in Euclidean component
space.

\paragraph{Linear conormal regularity theorem.}
We use \cite[Theorem~4.1]{DongJeon2026} with weight exponent zero and zero
divergence datum.  For a bounded weak solution of a uniformly strongly
parabolic divergence-form system with homogeneous conormal data on a flat
face, its $k=1$ and $k=2$ cases give, respectively,
\[
 \begin{array}{ccl}
 \mathbb A\in C^{\bar\beta/2,\bar\beta}
 &\Longrightarrow&v\in C^{(1+\beta)/2,1+\beta},\\
 \mathbb A\in C^{(1+\bar\beta)/2,1+\bar\beta}
 &\Longrightarrow&v\in C^{1+\beta/2,2+\beta},
 \end{array}
 \qquad 0<\beta<\bar\beta<1,
\]
on a smaller half-cylinder.  The estimates in that theorem's proof bound
the output norm by $C\|v\|_{L^\infty}$, with $C$ depending on the fixed
cylinder ratio, dimensions, parabolicity bounds, and coefficient norm, not
on a prior gradient bound.  The $k=2$ step is detailed in
\cite[Theorem~4.2]{DongJeon2026}.

The same theorem gives the interior estimate without reflecting the
coefficients: for a solution $v(t,y)$ on a full cylinder, introduce an
additional spatial coordinate $r>0$, set $V(t,y,r)=v(t,y)$, and extend the
tensor by the identity in the $r$ direction and zero mixed blocks.  This
is a strongly parabolic system in one higher space dimension with zero
conormal derivative at $r=0$.  Applying the half-cylinder result and
restricting recovers the interior estimate.  Thus the linear tool does not
require an additional constitutive result from the companion paper; its
application below verifies the actual transformed tensor and weak boundary
condition.

\begin{lemma}[First-order conormal Schauder estimate]
\label{lem:lipschitz}
Let $0<\beta<\min\{\theta,\alpha\}$, where $\theta$ is furnished by
Theorem~\ref{thm:DG}.  For every $0<\tau<\tau_1<T$ there is $C_{\tau_1}$, depending on the structural data, the bound \eqref{eq:u-holder}, and the gap $\tau_1-\tau$, such that
\begin{equation}
 \|U\|_{C_{t,x}^{(1+\beta)/2,1+\beta}
 ([\tau_1,T)\times\overline\Omega)}\le C_{\tau_1}.
 \label{eq:first-conormal-gain}
\end{equation}
In particular, because $u$ and $U$ are related by a fixed affine bijection,
\begin{equation}
 \sup_{(t,x)\in[\tau_1,T)\times\Omega}
 \bigl(|\nabla U(t,x)|+|\nabla u(t,x)|\bigr)\le C_{\tau_1}.
 \label{eq:lipschitz}
\end{equation}
If $T<\infty$, the constant is independent of the distance to the terminal
face.
\end{lemma}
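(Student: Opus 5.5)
The estimate is local: we freeze the state, apply the $k=1$ case of the linear conormal theorem of \cite{DongJeon2026} on small cylinders, and then cover the domain. Fix $\tau<\tau'<\tau_1$ with $\tau'=(\tau+\tau_1)/2$. By \eqref{eq:u-holder}, applied on $[\tau',T)$ after starting Theorem~\ref{thm:DG} at a still earlier time, $U$ has a uniform parabolic H\"older modulus there, independent of $T$ and of any gradient norm. Choose a radius $r_0>0$, depending only on that modulus and on $\varepsilon_*$ from \Cref{lem:frozen-symmetrizer}, such that every parabolic cylinder $Q_{r_0}^\Omega(t_0,x_0)$ with $t_0-r_0^2\ge\tau'$ satisfies $|U-U(t_0,x_0)|\le\varepsilon_*$. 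We also take $r_0$ below the boundary-chart radius $\rho_\Omega$ and with $r_0^2\le\tau_1-\tau'$. On each such cylinder we freeze $U_*=U(t_0,x_0)$ and pass to $v=S_*(U-U_*)$, which solves \eqref{eq:v-system}. Its coefficient tensor $\mathbb A=S_*A_M(U)S_*^{-1}\otimes I_d$ is uniformly strongly parabolic by \eqref{eq:frozen-sym}, with constants depending on $\nu_*$ and on bounds for $S_*^{\pm1}$ over $\mathcal K_M$. Moreover $\mathbb A\in C^{\theta/2,\theta}$ with norm controlled by the $C^1$ norm of $A_M$ on $\mathcal K_M$ and by \eqref{eq:u-holder}. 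Finally, $|v|\le C\varepsilon_*$.

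For interior cylinders we use the dimension-lifting device described before the lemma and apply \cite[Theorem~4.1]{DongJeon2026} with $k=1$. This gives $\|v\|_{C^{(1+\beta)/2,1+\beta}(Q_{r_0/2})}\le C\|v\|_{L^\infty}$ for any $\beta<\theta$. Boundary cylinders need more care, and this is where I expect the main obstacle. We flatten $\partial\Omega$ by a $C^{2+\alpha}$ chart $\Phi$. The transformed system is $\partial_t\tilde v-\Div_y(\tilde{\mathbb A}\nabla_y\tilde v)=0$ with
\[
 \tilde{\mathbb A}=|\det D\Phi|^{-1}\,D\Phi\,\mathbb A\,D\Phi^{\sf T}
\]
in block form, where the Jacobian weight multiplies the time derivative. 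That weight can be absorbed by dividing through, at the cost of a lower-order term, or it can be kept as a $C^{1+\alpha}$ time coefficient if the cited theorem allows it. I would check that it does. If not, I would use a chart whose Jacobian determinant equals $1$. Such a volume-preserving $C^{1+\alpha}$-regular chart exists for a $C^{2+\alpha}$ boundary, and this is the reason $\beta<\alpha$ is needed. The coefficient regularity is then $C^{\min(\theta,\alpha)}$.

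Next we verify the boundary condition. The homogeneous Neumann condition $\partial_\nu U=0$ from Lemma~\ref{lem:root-strong-transfer} and Theorem~\ref{thm:local} is equivalent to the conormal condition $\nu\cdot A_M(U)\nabla U=0$, because the matrix $A_M$ acts only on component indices and commutes with the spatial normal. After flattening this becomes the conormal condition $e_d\cdot\tilde{\mathbb A}\nabla\tilde v=0$ on the flat face. It holds in the weak sense, since a strong solution integrates by parts against test functions that are nonzero on the boundary. The half-cylinder case of \cite[Theorem~4.1]{DongJeon2026} then gives the same local $C^{(1+\beta)/2,1+\beta}$ bound for $\tilde v$, and pulling back through the chart gives the bound for $v$.

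Undoing the transformation, $U=U_*+S_*^{-1}v$ gives local bounds for $U$ that are uniform in the center. All cylinders lie backward in time and begin after $\tau'$, so the bounds are independent of the distance to $T$. Finitely many spatial charts, combined with the uniform local bounds and with \eqref{eq:u-holder} for well-separated points, patch into the global seminorm \eqref{eq:first-conormal-gain} on $[\tau_1,T)\times\overline\Omega$. The gradient bound \eqref{eq:lipschitz} then follows because $u=\mathsf R(U-\text{const})$ is an affine image of $U$.
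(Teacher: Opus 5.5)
Your proposal is correct and follows essentially the same route as the paper. Both arguments freeze locally via the uniform frozen symmetrizer, pass to $v=S_*(U-U_*)$, apply the $k=1$ Dong--Jeon conormal estimate on boundary half-cylinders with the extra-coordinate reduction for interior cylinders, derive the flat conormal condition from $\partial_\nu U=0$, and cover by backward cylinders to get uniformity up to $T$. Your fallback chart is what the paper uses directly, since the standard graph chart $\Theta(\xi)=x_*+Q_*(\xi',\xi_d+\psi(\xi'))$ already has $\det D\Theta=1$, so the other two options for the Jacobian weight are unnecessary. The paper also makes explicit a check you leave implicit: the flattened tensor $G^{\gamma\delta}\widetilde A$ stays strongly parabolic because the symmetric metric $G$ annihilates the skew part of $\widetilde A$.
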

\begin{proof}
We give the main reduction, following \cite[Lemma~13.3]{BotheAdditive2026}.  Fix a point $(t_0,x_0)$ on a positive-time tail and let $U_*=U(t_0,x_0)$.  By \eqref{eq:u-holder}, for a radius $r_*>0$ depending only on the uniform H\"older modulus and the compact moment range,
\[
 |U(t,x)-U_*|\le\varepsilon_*
\]
throughout every interior or boundary parabolic cylinder of radius at most $r_*$.  Hence \Cref{lem:frozen-symmetrizer} gives the uniform coercivity \eqref{eq:frozen-sym} there.  Reduce $r_*$ further according to $\tau_1-\tau$, if necessary, so that every enlarged backward cylinder used for centers in $[\tau_1,T)$ stays above the lower time face $\tau$.

With $S_*=H_M(U_*)^{1/2}$ and $v=S_*(U-U_*)$, the equation becomes \eqref{eq:v-system}.  Its coefficient
\[
 \widetilde A(t,x)=S_*A_M(U(t,x))S_*^{-1}
\]
has a uniformly positive symmetric part and belongs to $C^{\theta/2,\theta}$ with a norm independent of the center of the cylinder.  This gives the required tensor on interior cylinders.  We include the boundary transformation explicitly to verify that the present moment system satisfies the hypotheses of the coefficient-independent conormal estimate.

Fix a boundary point $x_*$ and, after an orientation-preserving rotation
$Q_*\in SO(d)$, represent the boundary locally as a $C^{2+\alpha}$ graph
$y_d=\psi(y')$.  Use the graph chart
\begin{equation}\label{eq:core-boundary-chart}
 \Theta(\xi)=x_*+Q_*\bigl(\xi',\xi_d+\psi(\xi')\bigr),
 \qquad \det D\Theta=1,
\end{equation}
and put
\[
 G(\xi)=(D\Theta(\xi))^{-1}(D\Theta(\xi))^{-\sf T},
 \qquad \widehat v(t,\xi)=v(t,\Theta(\xi)).
\]
The determinant in \eqref{eq:core-boundary-chart} is one because the graph
map has triangular derivative with unit diagonal and $Q_*$ has determinant
one.  Hence the Piola identity introduces no Jacobian weight in either the
time term or the divergence term.  The transformed equation is exactly
\begin{equation}\label{eq:core-flattened-system}
 \widehat v_t-
 \partial_{\xi_\gamma}\!\left(
 \mathbb A^{\gamma\delta}(t,\xi)
 \partial_{\xi_\delta}\widehat v\right)=0,
 \qquad
 \mathbb A^{\gamma\delta}(t,\xi)
 =G^{\gamma\delta}(\xi)
   \widetilde A(t,\Theta(\xi)).
\end{equation}
There are no additional first-order terms because the graph chart is
volume preserving.

The original boundary condition is $\partial_\nu U=0$.  Since $S_*$ is
constant, it gives $\partial_\nu v=0$, and therefore the physical flux
$\widetilde A\nabla v$ has zero normal component.  The same Piola transform
then yields on the flat face $\{\xi_d=0\}$ the exact homogeneous conormal
condition associated with \eqref{eq:core-flattened-system},
\begin{equation}\label{eq:core-flat-conormal}
 \sum_{\gamma,\delta=1}^d
 (e_d)_\gamma\,\mathbb A^{\gamma\delta}
 \partial_{\xi_\delta}\widehat v=0.
\end{equation}
Thus the boundary condition required by the conormal Schauder theorem is not
an additional assumption; it is the transformed Maxwell--Stefan no-flux
condition.

To verify strong parabolicity, decompose
$\widetilde A=\widetilde A_{\rm s}+\widetilde A_{\rm a}$ into its symmetric
and skew-symmetric component-space parts.  For every spatial-component array
$\zeta=(\zeta_\gamma)_{\gamma=1}^d$,
\[
 \sum_{\gamma,\delta}
 G^{\gamma\delta}\zeta_\gamma^{\sf T}
 \widetilde A_{\rm a}\zeta_\delta=0
\]
because $G^{\gamma\delta}=G^{\delta\gamma}$.  Consequently
\[
 \sum_{\gamma,\delta}
 \zeta_\gamma^{\sf T}\mathbb A^{\gamma\delta}\zeta_\delta
 =\sum_{\gamma,\delta}G^{\gamma\delta}
 \zeta_\gamma^{\sf T}\sym\widetilde A\,\zeta_\delta,
\]
and \eqref{eq:frozen-sym}, together with the uniform positive definiteness of
$G$ on the finite chart family, gives one uniform strong-parabolicity
constant.  Moreover $G\in C^{1+\alpha}$ and
$\widetilde A\in C^{\theta/2,\theta}$.  Since the exponent in the statement
satisfies $\beta<\min\{\theta,\alpha\}$, choose an intermediate exponent
\[
 \beta<\bar\beta<\min\{\theta,\alpha\}.
\]
Then $\mathbb A\in C^{\bar\beta/2,\bar\beta}$, with a norm uniform in the
cylinder center.

The strong solution is also a bounded weak solution on every such finite
cylinder.  Indeed $p>d+2>2$, so the maximal-$L^p$ class gives the local $L^2$
control of $\widehat v$ and $\nabla\widehat v$ required in the weak
formulation; \eqref{eq:core-flattened-system} and
\eqref{eq:core-flat-conormal} give exactly the weak conormal identity.
Compactness of the moment range gives the uniform $L^\infty$ bound after
the constant similarity.  Thus all hypotheses of the stated linear conormal theorem are satisfied.  Apply \cite[Theorem~4.1, $k=1$]{DongJeon2026} with coefficient exponent $\bar\beta$ and output exponent $\beta<\bar\beta$.  Parabolic rescaling gives a uniform $C^{(1+\beta)/2,1+\beta}$ estimate on a concentric smaller boundary half-cylinder; the additional-coordinate reduction gives the corresponding interior estimate.

Undoing the constant similarity and the boundary chart gives the same estimate
for $U$.  A finite spatial covering and overlapping backward cylinders
propagate the local estimates along $[\tau_1,T)$; only backward cylinders are
used, so the constants remain uniform when $T<\infty$ and the upper time face
is approached.  Finally the moment map is a fixed affine bijection, so the
same first-order bound holds for $u$.
\end{proof}

\subsection{Terminal trace bound and global continuation}
\label{sec:terminal}

\begin{lemma}[Terminal trace bound from H\"older time control and spatial Lipschitz control]
\label{lem:terminal-trace}
Let $b<\infty$ and let $u$ be a maximal-$L^p$ strong solution of
\eqref{eq:nondiv-u} on $[a,b)$ such that
\[
 u([a,b)\times\overline\Omega)\subset\cK\Subset\mathcal U,
\]
\[
 [u]_{C_t^{\delta_t}([a,b);C(\overline\Omega))}\le M_0,
 \qquad
 \sup_{a\le t<b}\|\nabla u(t)\|_{L^\infty}\le M_1
\]
for some $\delta_t>0$.  Then
\begin{equation}
 \sup_{a\le t<b}\|u(t)\|_{X_{\gamma,p}}<\infty.
 \label{eq:terminal-trace-abstract}
\end{equation}
The bound is independent of the distance of $t$ to $b$.
\end{lemma}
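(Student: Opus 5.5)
The plan is to view \eqref{eq:nondiv-u} on short terminal windows $[s,b)$ as a \emph{linear} nonautonomous problem. Its coefficient $C(u(t,x))$ is frozen along the given solution, and the forcing $F(u,\nabla u)$ is treated as a known right-hand side. The hypotheses supply exactly the two ingredients this needs.

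First, the forcing is bounded. Since $F$ is quadratic in $\nabla u$ with coefficients $DC(u)$, and these are bounded on $\cK$, the Lipschitz bound gives $\|F(u,\nabla u)\|_{L^\infty((a,b)\times\Omega)}\le C(\cK)M_1^2$. Hence $F\in L^p(s,b;X_0)$ with norm at most $C M_1^2 (b-s)^{1/p}$.

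Second, the coefficient has a uniform modulus of continuity. The map $C$ is Lipschitz on $\cK$, so
\[
 \|C(u(t))-C(u(t'))\|_{L^\infty}\le C M_0|t-t'|^{\delta_t}.
\]
In space, $\nabla u$ is bounded, so $C(u(t,\cdot))$ is Lipschitz in $x$ uniformly in $t$. Consequently the coefficient field is uniformly continuous on $[a,b)\times\overline\Omega$ with a modulus depending only on $M_0,M_1,\cK$. It therefore extends continuously to $t=b$ with the same sectorial bounds.

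The main step is nonautonomous maximal regularity on $(s,b)$ with constants independent of how close $s$ is to $b$. I would do this by freezing at a fixed time $t_1\in[s,b)$. By Lemma~\ref{lem:uniform-MR}, the operator $\mathcal A(v)$ with $v$ a frozen profile has uniform maximal regularity. The profiles $C(u(t_1,\cdot))$ range in a compact family in $C(\overline\Omega)$ by Arzel\`a--Ascoli, so the compactness/Neumann-series covering used in that lemma applies. Lemma~\ref{lem:uniform-MR} assumes profiles bounded in $X_{\gamma,p}$, but its proof only uses relative compactness of the coefficients in $C(\overline\Omega)$ together with the symbol bounds. I would restate it in that form, which is the point I expect needs the most care. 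The resulting bound $M_{\mathrm{MR}}$ is uniform.

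Next I would write the equation as
\[
 \partial_t u+\mathcal A(u(t_1))u=\bigl[C(u(t))-C(u(t_1))\bigr]\Delta u+F.
\]
The bracket is at most $C M_0(b-s)^{\delta_t}$ in $L^\infty$ for $t\in(s,b)$. Choose $b-s$ so small that $M_{\mathrm{MR}}C M_0(b-s)^{\delta_t}\le 1/2$. The perturbation is then absorbed in $L^p(s,b;X_1)$, giving
\[
 \|u\|_{\mathbb E(s,b)}\le 2M_{\mathrm{MR}}\bigl(\|u(s)\|_{X_{\gamma,p}}+\|F\|_{L^p}\bigr).
\]
This is an a priori bound, which is legitimate because $u$ is already known to lie in the maximal-regularity class on every $(s,b')$ with $b'<b$. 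I would apply it on $(s,b')$ with constants independent of $b'$, and then let $b'\uparrow b$.

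Finally, fix one such $s$, depending only on $M_0,M_1,\cK$. On $[a,s]$ the solution is in $C([a,s];X_{\gamma,p})$ since $s<b$, so $\|u(s)\|_{X_{\gamma,p}}<\infty$. The trace embedding $\mathbb E(s,b')\hookrightarrow C([s,b'];X_{\gamma,p})$ has a constant independent of $b'$ by the uniform extension argument already used in Theorem~\ref{thm:local}. Combining this with the maximal-regularity bound gives $\sup_{s\le t<b}\|u(t)\|_{X_{\gamma,p}}<\infty$, and hence \eqref{eq:terminal-trace-abstract}. The bound involves no quantity depending on the distance to $b$.
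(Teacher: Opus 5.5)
Your proposal is correct and uses the same ingredients as the paper:
\begin{itemize}
\item the $L^\infty$ bound on $F$ from the Lipschitz hypothesis;
\item Arzel\`a--Ascoli compactness of the coefficient fields $C(u(t,\cdot))$ in $C(\overline\Omega)$, giving one frozen maximal-regularity constant (the paper likewise reruns the covering argument of Lemma~\ref{lem:uniform-MR} in this weaker topology rather than citing it);
\item absorption of the freezing error via the H\"older-in-time bound;
\item a trace estimate on windows ending strictly below $b$, followed by a supremum.
\end{itemize}
The only difference is the final assembly. The paper iterates the affine recursion $\sup_{[s,s+h]}\|u\|_{X_{\gamma,p}}\le C_{it}\|u(s)\|_{X_{\gamma,p}}+C_0$ over finitely many windows of length $h_0$ starting at $a$. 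This bounds the terminal trace quantitatively in terms of $\|u(a)\|_{X_{\gamma,p}}$, $M_0$, $M_1$, $\cK$ and $b-a$. You instead use a single terminal window together with the merely qualitative finiteness of $\|u(s)\|_{X_{\gamma,p}}$, which comes from $u\in C([a,s];X_{\gamma,p})$. That is enough for the finiteness asserted and for the continuation argument, but it gives no control in terms of the data at $a$. Your appeal to a $b'$-independent trace constant is also fine, because $b'-s$ stays bounded below as $b'\uparrow b$, and your bound carries $\|u(s)\|_{X_{\gamma,p}}$ anyway. The paper retains the initial-trace term explicitly because the embedding constant is not uniform on arbitrarily short windows without it.
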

\begin{proof}
We adapt the short-interval freezing and trace recursion of \cite[Lemma~13.4]{BotheAdditive2026} to the reduced core-diagonal concentration operator.

For a frozen time $s$ put
\[
 \mathcal A(s)v=-C(u(s,\cdot))\Delta v,
 \qquad D(\mathcal A(s))=X_1,
 \qquad F(t):=F(u(t),\nabla u(t)).
\]
The compact state range and the spatial Lipschitz bound imply
\[
 \sup_{a\le t<b}\|F(t)\|_{X_0}\le C_F
\]
and give one common spatial Lipschitz constant for the coefficient fields
$C(u(t,\cdot))$.  Hence these coefficient fields are relatively compact in
$C(\overline\Omega)$.  Every member of the closure takes values in the same
compact normally elliptic matrix family and satisfies the same Neumann
Lopatinskii--Shapiro condition from Lemma~\ref{lem:LS}.  Augner's bounded-domain theorem therefore gives maximal regularity for a shifted realization associated with each member of the closure; on the fixed finite horizon the exponential conjugation used in Lemma~\ref{lem:uniform-MR} removes that shift and gives maximal regularity for the unshifted realization.  The operator estimate
\[
 \|\mathcal A_1-\mathcal A_2\|_{\mathcal L(X_1,X_0)}
 \le C\|C_1-C_2\|_{L^\infty}
\]
and a finite Neumann-series covering of the compact closure give one
zero-trace maximal-regularity constant on intervals of length at most a fixed
$T_0>0$.

For $I=(s,s+h)$ write
\[
 \mathbb E_I:=W^{1,p}(I;X_0)\cap L^p(I;X_1),
\]
with its ordinary intersection norm.  Choose a bounded coretraction of the trace on the fixed horizon $(0,T_0)$.  Subtracting the restriction of this coretraction from a solution, and applying the uniform zero-trace estimate to the remainder, gives
\begin{equation}
 \|v\|_{\mathbb E_I}
 \le C_{MR}\bigl(\|v(s)\|_{X_{\gamma,p}}+\|f\|_{L^p(I;X_0)}\bigr)
 \label{eq:uniform-nonzero-MR-core}
\end{equation}
whenever $v_t+\mathcal A(s)v=f$, $I=(s,s+h)$, and $0<h\le T_0$.
Here $C_{MR}$ is independent of $s$ and $h$: the coretraction is chosen on the fixed horizon, its restriction has no larger maximal-regularity norm, and the frozen operators are uniformly bounded in $\mathcal L(X_1,X_0)$.

The corresponding uniform trace estimate must retain the initial trace:
\begin{equation}
 \|v\|_{C([s,s+h];X_{\gamma,p})}
 \le C_{\rm tr}\bigl(\|v\|_{\mathbb E_I}
                  +\|v(s)\|_{X_{\gamma,p}}\bigr),
 \qquad 0<h\le T_0,
 \label{eq:uniform-augmented-trace-core}
\end{equation}
where $C_{\rm tr}\ge1$ is independent of $s,h$.  To see this directly, fix one operator $\mathcal B$ with domain $X_1$ and maximal regularity on $(0,T_0)$.  Translate $v$ to $v_s(r)=v(s+r)$ on $(0,h)$, and extend $v_s'+\mathcal Bv_s$ by zero to a forcing $\widetilde f$ on $(0,T_0)$.  Solve
\[
 \widetilde v'+\mathcal B\widetilde v=\widetilde f,
 \qquad \widetilde v(0)=v(s).
\]
Uniqueness gives $\widetilde v=v_s$ on $(0,h)$, and fixed-horizon maximal regularity yields
\[
 \|\widetilde v\|_{\mathbb E_{(0,T_0)}}
 \le C\bigl(\|v\|_{\mathbb E_I}+\|v(s)\|_{X_{\gamma,p}}\bigr).
\]
Applying the fixed-horizon trace embedding and restricting proves
\eqref{eq:uniform-augmented-trace-core}; this is also consistent with the initial-trace term in \cite[Lemma~7.2]{Amann2005}.  For zero initial trace that term is absent.  It cannot be omitted for general $v$: for a nonzero time-constant function with value in $X_1$, the maximal-regularity norm tends to zero like $h^{1/p}$ while the trace norm does not.

Smoothness of $C$ on $\cK$ and the assumed time H\"older control give
\begin{equation}
 \|\mathcal A(t)-\mathcal A(s)\|_{\mathcal L(X_1,X_0)}
 \le LM_0|t-s|^{\delta_t}.
 \label{eq:terminal-operator-holder}
\end{equation}
Choose $h_0\le T_0$ so that
$C_{MR}LM_0h_0^{\delta_t}\le1/2$.  On any
$I=(s,s+h)$ with $a\le s<s+h<b$ and $h\le h_0$, freeze at $s$:
\[
 u_t+\mathcal A(s)u
 =F(t)+(\mathcal A(s)-\mathcal A(t))u.
\]
Using \eqref{eq:uniform-nonzero-MR-core},
\eqref{eq:terminal-operator-holder}, and the uniform forcing bound, and
absorbing the last term, gives
\begin{equation}
 \|u\|_{\mathbb E_I}
 \le 2C_{MR}\bigl(\|u(s)\|_{X_{\gamma,p}}+C_Fh^{1/p}\bigr).
 \label{eq:terminal-MR}
\end{equation}
Combining \eqref{eq:terminal-MR} with the augmented estimate
\eqref{eq:uniform-augmented-trace-core} gives
\[
 \begin{aligned}
  \sup_{t\in[s,s+h]}\|u(t)\|_{X_{\gamma,p}}
  &\le C_{\rm tr}(1+2C_{MR})\|u(s)\|_{X_{\gamma,p}}\\
  &\quad+2C_{\rm tr}C_{MR}C_Fh_0^{1/p}.
 \end{aligned}
\]
Thus, with
\[
 C_{it}:=C_{\rm tr}(1+2C_{MR})\ge1,
 \qquad C_0:=2C_{\rm tr}C_{MR}C_Fh_0^{1/p},
\]
one has the uniform affine recursion
\begin{equation}
 \sup_{t\in[s,s+h]}\|u(t)\|_{X_{\gamma,p}}
 \le C_{it}\|u(s)\|_{X_{\gamma,p}}+C_0.
 \label{eq:trace-recursion}
\end{equation}
Starting at $a$, at most $1+\lceil(b-a)/h_0\rceil$ successive intervals are needed.  On the final interval, whose upper endpoint is the excluded time $b$, apply the estimate first for every shorter interval ending strictly below $b$ and then take the supremum of their left-hand sides.  No trace at $b$ is assumed.  Iterating the affine recursion a finite number of times proves \eqref{eq:terminal-trace-abstract}; no contractivity condition $C_{it}<1$ is needed.
\end{proof}

To prove global continuation, suppose for contradiction that
$T_{\max}<\infty$.  By the strictly positive initial condition
\eqref{eq:initial-data}, the Stieltjes roots are defined at $t=0$.  Their
initial extrema lie in a compact rectangle strictly inside $\cZ$.
Theorem~\ref{thm:root-rectangle} and Corollary~\ref{cor:species-separation}
therefore place the entire trajectory $[0,T_{\max})$ in one compact
$\cK\Subset\cS^\circ$.  Choose any $0<\tau<T_{\max}/2$.
Apply Theorem~\ref{thm:DG} on $[\tau/4,T_{\max})$, with output beginning at $\tau/2$, and transfer the estimate by the smooth concentration and moment maps.  This gives
\[
 [u]_{C_t^{\theta/2}([\tau/2,T_{\max});C(\overline\Omega))}\le M_0.
\]
Apply Lemma~\ref{lem:lipschitz} on $[\tau/2,T_{\max})$ to obtain the estimate on $[\tau,T_{\max})$.  It gives
\[
 \sup_{t\in[\tau,T_{\max})}\|\nabla u(t)\|_{L^\infty}\le M_1.
\]
Lemma~\ref{lem:terminal-trace}, with $\delta_t=\theta/2$, therefore yields
\[
 \sup_{\tau\le t<T_{\max}}\|u(t)\|_{X_{\gamma,p}}<\infty.
\]
The family-uniform restart time from Theorem~\ref{thm:local}, applied at a
time sufficiently close to $T_{\max}$, extends the solution beyond
$T_{\max}$, a contradiction.  Hence
\[
 T_{\max}=\infty.
\]
This proves the global-existence part of Theorem~\ref{thm:main}.

\subsection{Uniform positive-time smoothing and exponential relaxation}
\label{sec:longtime}

\begin{proposition}[Uniform second-order conormal Schauder estimate on positive-time tails]
\label{prop:uniform-tail-smoothing}
Let $x$ be the global solution furnished above.  There exists $\beta\in(0,\alpha)$, independent of $\tau$, such that for every $\tau>0$ there is
$C_\tau<\infty$ with
\begin{equation}
 \sup_{s\ge\tau}
 \|u\|_{C_{t,x}^{1+\beta/2,2+\beta}
 ([s,s+1]\times\overline\Omega)}\le C_\tau.
 \label{eq:uniform-tail-C2}
\end{equation}
The same estimate, after smooth composition on the fixed compact root range,
holds for the concentrations $x$ and the Stieltjes roots $z$.
\end{proposition}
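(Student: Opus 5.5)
The plan is to bootstrap the first-order conormal estimate of Lemma~\ref{lem:lipschitz} to second order with the $k=2$ case of the linear conormal Schauder theorem \cite[Theorem~4.1]{DongJeon2026}, applied to the frozen-symmetrized moment system \eqref{eq:v-system}. The decisive point is that, after global existence, every constant is uniform in time. The root rectangle from Theorem~\ref{thm:root-rectangle} is fixed by the initial data. Theorem~\ref{thm:DG} with $T=\infty$ then gives a H\"older bound \eqref{eq:u-holder} on $[\tau/4,\infty)$ that depends only on $\mathcal D_R$ and $\tau$. Lemma~\ref{lem:lipschitz}, applied on $[\tau/4,\infty)$ with output from $\tau/2$, gives
\[
 \|U\|_{C_{t,x}^{(1+\beta_1)/2,1+\beta_1}([\tau/2,\infty)\times\overline\Omega)}\le C_{\tau},
 \qquad 0<\beta_1<\min\{\theta,\alpha\}.
\]
This bound is uniform in $s$, since it comes from a covering by backward cylinders of fixed radius.

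Next I fix the exponents once and for all: $\beta_1$ as above, and $0<\beta<\beta_1$. Both depend only on $\theta(\mathcal D_R)$ and $\alpha$, not on $\tau$. Around any point $(t_0,x_0)$ with $t_0\ge\tau$, I freeze $U_*=U(t_0,x_0)$ and use the same cylinder radius $r_*$ as in Lemma~\ref{lem:lipschitz}, so that \eqref{eq:frozen-sym} holds. In a boundary chart I also use the same volume-preserving flattening \eqref{eq:core-boundary-chart}. The flattened coefficient is $\mathbb A=G\,S_*A_M(U\circ\Theta)S_*^{-1}$. Since $A_M$ is $C^3$ on the compact moment range and $U\in C^{(1+\beta_1)/2,1+\beta_1}$, the composition $A_M(U)$ lies in $C^{(1+\beta_1)/2,1+\beta_1}$, with norm controlled by $C_\tau$ and the $C^2$ norm of $A_M$. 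The factor $G\in C^{1+\alpha}$ is time independent. Together these give $\mathbb A\in C^{(1+\bar\beta)/2,1+\bar\beta}$ for $\beta<\bar\beta<\beta_1$.

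The one point requiring care is the time regularity of the composite coefficient. The half-order time H\"older regularity of $\nabla_x$ is what the parabolic space $C^{(1+\bar\beta)/2,1+\bar\beta}$ asks for. Time regularity of $U$ itself of order $(1+\beta_1)/2$ transfers through the smooth map $A_M$, so $A_M(U)$ inherits it, and products with $G$ stay in the class. The strong parabolicity and the homogeneous conormal condition \eqref{eq:core-flat-conormal} are verified exactly as in Lemma~\ref{lem:lipschitz}. Boundedness of the weak solution again comes from compactness of the range.

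The $k=2$ case then bounds $\widehat v$ in $C^{1+\beta/2,2+\beta}$ on a smaller half-cylinder by $C\|\widehat v\|_{L^\infty}$. For interior points I use the additional-coordinate reduction. Undoing the chart requires $\Theta\in C^{2+\alpha}$, which is available since $\beta<\alpha$. I then undo $S_*$. A finite spatial covering and finitely many backward cylinders covering each slab $[s,s+1]$, $s\ge\tau$, give \eqref{eq:uniform-tail-C2} for $U$, hence for $u=\mathsf R(U-\text{const})$. Smooth composition with the real-analytic maps $u\mapsto x$ and $x\mapsto z$ on the fixed compact range transfers the bound to $x$ and $z$.

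I expect the main obstacle to be checking that the coefficient-regularity hypothesis of the $k=2$ theorem is genuinely met, with time-uniform constants, by the composite $G\,S_*A_M(U)S_*^{-1}$. Two sub-points need attention there. First, the first-order estimate must be uniform on an infinite tail, which is why I invoke Lemma~\ref{lem:lipschitz} with $T=\infty$ and use only the $\tau$-dependence of Remark~\ref{rem:DG-constant-ledger}. Second, $\beta$ must not shrink with $\tau$. Both are handled by fixing $\beta_1$ and $\beta$ from $\theta(\mathcal D_R)$ and $\alpha$ alone.
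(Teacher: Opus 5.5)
Your proposal is correct and follows essentially the same route as the paper. You fix $\beta_1<\min\{\theta,\alpha\}$ and then $\beta<\bar\beta<\beta_1$ using the time-independent De Giorgi exponent of the single invariant root rectangle. You upgrade to a uniform $C^{(1+\beta_1)/2,1+\beta_1}$ tail bound via the first-order conormal lemma, then apply the $k=2$ Dong--Jeon estimate to the frozen-symmetrized, volume-preservingly flattened moment system on a fixed family of backward cylinders; the only difference from the paper is the immaterial bookkeeping of intermediate waiting times ($\tau/4,\tau/2$ versus $\tau/6,\tau/3,\tau/2$).
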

\begin{proof}
The invariant-root theorem applied to the strictly positive initial data places the entire trajectory in one compact root rectangle and hence in one compact $\cK\Subset\cS^\circ$.  Consequently the De Giorgi exponent $\theta$ is fixed by the structural data and this single root rectangle, independently of the waiting time.  Choose once and for all
$0<\beta_1<\min\{\theta,\alpha\}$ and then $0<\beta<\beta_1$.  For each $\tau>0$, apply the De Giorgi theorem on $[\tau/6,\infty)$ to obtain a uniform
$C^{\theta/2,\theta}$ bound on $[\tau/3,\infty)$.  Applying
Lemma~\ref{lem:lipschitz} on backward cylinders of one fixed small radius gives
one uniform
$C^{(1+\beta_1)/2,1+\beta_1}$ bound on $[\tau/2,\infty)$.

Repeat the frozen-symmetrizer and boundary-flattening construction from
\Cref{sec:conormal}.  The first conormal estimate gives $U$ uniformly
$C^{(1+\beta_1)/2,1+\beta_1}$ on $[\tau/2,\infty)$.  Fix the exponent
$0<\beta<\beta_1$ from the statement and choose
\[
 \beta<\bar\beta<\beta_1.
\]
Since $A_M$ is $C^3$ on the compact moment range and the boundary metric of a
$C^{2+\alpha}$ chart is $C^{1+\alpha}$ (with $\beta_1<\alpha$), the
transformed coefficient tensor belongs to
$C^{(1+\bar\beta)/2,1+\bar\beta}$ with a uniform norm.  It is uniformly
strongly parabolic by the same frozen-symmetrizer calculation, the transformed
solution is a bounded weak conormal solution, and the divergence datum is
zero.  Thus \cite[Theorem~4.1, $k=2$]{DongJeon2026}, applied with coefficient exponent
$\bar\beta$ and output exponent $\beta<\bar\beta$, gives
\[
 u\in C_{t,x}^{1+\beta/2,2+\beta}
\]
on each concentric smaller boundary half-cylinder.  The additional-coordinate reduction in \Cref{sec:conormal} gives the corresponding estimate on interior cylinders.  A fixed overlapping family of backward cylinders
covers every slab $[s,s+1]$, $s\ge\tau$, and all constants depend only on the
single compact root rectangle, the coefficient bounds, $\tau$, and the domain
charts.  This proves \eqref{eq:uniform-tail-C2}.  The root and concentration
maps have uniformly bounded derivatives of all required orders on the compact
state range, so the estimate transfers by composition.
\end{proof}

Let
\[
 \overline x_i=\frac1{|\Omega|}\int_\Omega x_i^0\,\dd x,
 \qquad i=1,\ldots,N.
\]
Every mean is positive.  Integrating the component balance against the constant test function and using the no-flux trace gives
\[
 \frac{\dd}{\dd t}\int_\Omega x_i(t,y)\,\dd y=0
 \quad(i=1,\ldots,N),
\]
first almost everywhere in time and then in the integrated sense by strong time continuity.  Thus the species means are conserved.  With the
convention $0\log0=0$, define the relative mixing entropy
\begin{equation}
 \mathcal H(x\mid\overline x)
 :=\int_\Omega\sum_{i=1}^N
 x_i\log\frac{x_i}{\overline x_i}\,\dd x.
 \label{eq:relative-entropy}
\end{equation}

\begin{lemma}[Relative entropy identity]
\label{lem:entropy-from-zero}
For every $t\ge0$,
\begin{equation}
 \mathcal H(x(t)\mid\overline x)
 +4\int_0^t\sum_{i=1}^N\int_\Omega
 d_i(x)|\nabla\sqrt{x_i}|^2\,\dd x\,\dd s
 =\mathcal H(x^0\mid\overline x).
 \label{eq:relative-entropy-identity}
\end{equation}
\end{lemma}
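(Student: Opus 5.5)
The identity will first be established on positive-time intervals $[\varepsilon,t]$, where the solution is regular, and then extended to $\varepsilon=0$ by continuity. The step from the entropy identity of \Cref{cor:entropy} to the relative entropy is purely algebraic. Since species means are conserved,
\[
 \mathcal H(x\mid\overline x)=\int_\Omega\sum_i x_i\log x_i\,\dd x-\sum_i\log\overline x_i\int_\Omega x_i\,\dd x,
\]
and the last term is constant in time. Hence $\mathcal H(x(t)\mid\overline x)$ differs from $\int_\Omega h(x(t))\,\dd x$ by a time-independent constant. Because $\sum_i x_i=1$, the full sum $\sum_i x_i\log x_i$ equals the reduced entropy $h(u)$. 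It therefore suffices to prove \eqref{eq:fisher-id} in time-integrated form on $[0,t]$.

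On a compact positive-time interval $[\varepsilon,t]$ with $\varepsilon>0$, the setting is clean. By \Cref{cor:species-separation}, the range lies in a compact $\cK\Subset\cS^\circ$, so $Dh(u)$ is smooth and bounded along the trajectory. By the strong class, $u\in W^{1,p}(\varepsilon,t;L^p)\cap L^p(\varepsilon,t;W^{2,p})$ and $\nabla u$ is bounded. We test \eqref{eq:reduced-system} with $Dh(u)$. The time term equals $\frac{\dd}{\dd t}\int h(u)$ almost everywhere by the Bochner chain rule for $C^1$ functions with bounded derivative on the range. For the divergence term we integrate by parts; the boundary contribution vanishes because $\partial_\nu u=0$ forces $C(u)\partial_\nu u=0$. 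What remains is $\int\nabla u^{\sf T}H C\nabla u$. By \eqref{eq:HCcoercive} this equals $\sum_{i=1}^N\int d_i|\nabla x_i|^2/x_i=4\sum_i\int d_i|\nabla\sqrt{x_i}|^2$. Integrating in time gives the identity on $[\varepsilon,t]$. Here $\varepsilon$ is only needed if one prefers the smoothing regime; in fact the strong class already holds on $[0,t]$ and separation holds from $t=0$, so the computation can be done directly on $[0,t]$.

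To pass to the endpoint $0$, we use $u\in C([0,t];X_{\gamma,p})\hookrightarrow C([0,t];C(\overline\Omega))$. Together with the uniform separation from the boundary of the simplex, this makes $s\mapsto\mathcal H(x(s)\mid\overline x)$ continuous at $s=0$. The dissipation integrand is nonnegative and bounded (since $\nabla u$ is bounded and $x_i\ge\kappa$), so monotone convergence handles the time integral as $\varepsilon\downarrow0$. The result is \eqref{eq:relative-entropy-identity} for every $t\ge0$.

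The main, and fairly mild, obstacle is justifying the chain rule $\frac{\dd}{\dd t}\int h(u)=\int Dh(u)\cdot u_t$ in the maximal-regularity class. We handle it by noting that $h$ is smooth on a neighborhood of $\cK$ and $u\in W^{1,p}(0,t;L^p)\cap C([0,t]\times\overline\Omega)$. Approximating $u$ by time-mollified functions whose values stay in that neighborhood then gives the identity in integrated form, with no need for a pointwise derivative.
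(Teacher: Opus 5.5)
Your proposal is correct and follows the paper's route. Uniform species separation from the invariant root rectangle legitimizes the entropy test, the Fisher identity of \Cref{cor:entropy} gives the dissipation term, and conservation of the species means cancels the linear terms in the relative entropy; your time-mollification argument for the chain rule and your treatment of $t=0$ via $C([0,t];X_{\gamma,p})$ simply spell out details the paper leaves implicit.
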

\begin{proof}
The invariant root rectangle gives a uniform positive lower bound for every species, so the ordinary entropy test is legitimate on every finite interval.  Apply \Cref{cor:entropy} and use conservation of each species mass.  The linear terms produced when passing from the mixing entropy to the relative entropy cancel because the means $\overline x_i$ are constant in time.
\end{proof}

We use the bounded-domain logarithmic Sobolev inequality in the normalization
\begin{equation}
 \int_\Omega\rho\log\frac{\rho}{\overline\rho}\,\dd x
 \le C_{LS}(\Omega)\int_\Omega|\nabla\sqrt\rho|^2\,\dd x,
 \qquad
 \overline\rho=|\Omega|^{-1}\int_\Omega\rho.
 \label{eq:LSI}
\end{equation}
It holds on every bounded connected $C^2$ domain for nonnegative $\rho$ with $\sqrt\rho\in H^1(\Omega)$ and positive mean; compare the bounded-domain logarithmic Sobolev step in \cite{JungelStelzer2013}.  The constant here is defined by the displayed normalization, so the factor $4$ below comes only from $|\nabla\rho|^2/\rho=4|\nabla\sqrt\rho|^2$.  Applying \eqref{eq:LSI} specieswise and using $d_i\ge d_*$ gives
for almost every positive time
\[
 4\sum_i\int d_i|\nabla\sqrt{x_i}|^2
 \ge \frac{4d_*}{C_{LS}(\Omega)}
 \mathcal H(x\mid\overline x).
\]
Therefore, with
\begin{equation}
 \kappa_{LS}:=\frac{4d_*}{C_{LS}(\Omega)},
 \label{eq:kappaLS}
\end{equation}
Gronwall's inequality on $[\varepsilon,t]$ followed by
$\varepsilon\downarrow0$ yields
\begin{equation}
 \mathcal H(x(t)\mid\overline x)
 \le e^{-\kappa_{LS}t}\mathcal H(x^0\mid\overline x).
 \label{eq:entropy-decay}
\end{equation}

For completeness, the scalar Bregman density
\[
 \psi(a\mid b)=a\log(a/b)-a+b
\]
satisfies $\psi''(a)=1/a\ge1$ on $(0,1]$, hence
$\psi(a\mid b)\ge\tfrac12(a-b)^2$, including $a=0$ by continuity.  Since the
linear terms cancel after summing the species,
\begin{equation}
 \|x(t)-\overline x\|_{L^2}
 \le \sqrt{2\mathcal H(x^0\mid\overline x)}
 e^{-\kappa_{LS}t/2}.
 \label{eq:L2-decay}
\end{equation}
Finally Proposition~\ref{prop:uniform-tail-smoothing} gives, for example,
\[
 \sup_{t\ge1}\|x(t)-\overline x\|_{W^{2,\infty}}\le M_2.
\]
The bounded-domain Gagliardo--Nirenberg estimate
\[
 \|v\|_{W^{1,\infty}}
 \le C\|v\|_2^{2/(d+4)}
       \|v\|_{W^{2,\infty}}^{(d+2)/(d+4)}+C\|v\|_2
\]
therefore yields
\begin{equation}
 \|x(t)-\overline x\|_{C^1(\overline\Omega)}
 \le C e^{-\kappa_{LS}t/(d+4)},
 \qquad t\ge0,
 \label{eq:C1-decay}
\end{equation}
after enlarging $C$ to cover the finite initial interval.  This completes the
regularity and long-time assertions of Theorem~\ref{thm:main}.
The equilibrium is uniquely selected by the conserved species masses:
for a positive stationary strong solution the entropy is time-independent,
so its dissipation vanishes and $d_i>0$ forces every $\nabla x_i$ to vanish.
Connectedness of $\Omega$ makes every component constant, and its conserved
mean fixes that constant.  Thus no additional choice of a limiting
equilibrium is implicit in the decay statement.

\section{Scope and fixed-frame characterization of the affine Darken class}
\label{sec:outlook}
The identification of the core-diagonal thermodynamic closure with multicomponent Darken diffusion in \cite[Section~8]{BotheDruet2023} leads, in the present normalization, to
\[
 x_t=\Div\!\bigl((I-x\one^{\sf T})D(x)\nabla x\bigr),
 \qquad D(x)=\diag(d_1(x),\ldots,d_N(x)).
\]
For arbitrary positive tracer diffusivities, mixing entropy symmetrizes this system and its physical spectrum lies between the smallest and largest $d_i$.  Under the assumptions of Section~\ref{sec:structure-local}, these identities yield local strong solvability and entropy dissipation.  The global argument requires additional structure.

For a fixed pole list $\delta$, the condition
\[
 d(x)=\sigma(x)\one+\tau(x)\delta
\]
preserves the associated Stieltjes eigenvectors, with modal diffusivities
\[
 \alpha_j(z)=\sigma(x(z))+\tau(x(z))z_j.
\]
The multi-affine inverse map and its EPD identities eliminate the self-square geometric term and give the mixed coefficient $(\alpha_j+\alpha_k)/(z_j-z_k)$.  Mixing entropy supplies the transverse coercivity needed for stabilization.  On the invariant root rectangle, the scalar estimates use uniform positivity and finite coefficient bounds; they do not require $\partial_{z_j}\alpha_j=0$.

Proposition~\ref{prop:fixed-frame-characterization} also proves the converse: if even one eigenvector of the Stieltjes frame generated by the prescribed pole list remains an eigenvector of the tangent mobility $A_D=P_xD$ at each interior composition, then the tracer diffusivities are affine functions of those poles.  The frame vectors themselves depend on composition.  Thus the characterization concerns this prescribed Stieltjes frame and leaves open other coordinate systems and other methods for proving global regularity.

The choice $\sigma=0$, $\tau=B_\delta^{-1}$ gives constant Maxwell--Stefan diffusivities and
\[
 \alpha_j=\frac{\prod_i\delta_i}{\prod_{k\ne j}z_k},
\]
as in \Cref{rem:trace-determinant}.  The theorem also allows arbitrary composition dependence of $\sigma$ and $\tau$, subject to the stated regularity and uniform positivity of the tracer diffusivities.  It does not cover all multicomponent Darken parametrizations.  For example, the prescription recalled in \cite[Section~8]{BotheDruet2023} from Liu, Vlugt, and Bardow, in which $1/d_i$ is a species-dependent mole-fraction average of reciprocal dilute-limit diffusivities, need not lie in a fixed affine species plane.

For ternary mixtures, \cite[Section~9]{BotheDruet2023} shows that every Maxwell--Stefan system is core-diagonal in that closure representation.  In the normalized ideal molar formulation, set
\[
 a(x)=f_{12}(x),\qquad b(x)=f_{13}(x),\qquad c(x)=f_{23}(x)>0,
\]
and
\[
 \Delta(x)=x_1a(x)b(x)+x_2a(x)c(x)+x_3b(x)c(x).
\]
The Darken tracer vector is
\begin{equation}\label{eq:variable-ternary-darken}
 d(x)=\frac1{\Delta(x)}(c(x),b(x),a(x))^{\sf T}.
\end{equation}
For constant $a,b,c$, it is a positive scalar multiple of a fixed species vector.  Pairwise distinct frictions therefore belong to the distinct-pole affine class after relabeling.  The all-equal case is included by $\tau=0$, and exactly two coincident frictions are covered by the two-group heat equation in \Cref{app:repeated-factors}.  Thus \Cref{cor:constant-friction} proves global strong solvability for every positive constant ternary friction triple with the strictly positive initial data assumed here.  The ternary theorem \cite[Theorem~1.1]{BotheTernary2026} also admits compatible nonnegative initial data, including data on the simplex boundary.  As detailed in Section~\ref{sec:literature}, the present argument independently recovers global strong well-posedness in the common normalized setting for strictly positive data and $d\ge2$.

For composition-dependent ternary frictions, \eqref{eq:variable-ternary-darken} belongs to the affine class only if $(c,b,a)$ remains in a fixed plane $\Span\{\one,\delta\}$.  Equivalently, wherever the relevant denominators are nonzero, the pairwise differences $c-b$, $b-a$, and $c-a$ have composition-independent proportions determined by $\delta$.  General positive variable ternary frictions satisfy no such restriction.  Their global strong theory remains outside the present theorem and, in this normalization, is the full ternary core-diagonal/Darken problem.

Two further extensions are distinct from the general Darken mobility problem.  For $N\ge4$, constant Maxwell--Stefan friction arrays need not belong to either the additive class of \cite{BotheAdditive2026} or the affine Darken class.  Nonideal thermodynamic driving forces can also alter the principal diffusion frame even when the mobility preserves the Stieltjes frame.  Neither extension is covered by the present fixed-frame argument.

\appendix
% Keep appendix cross-references stable across cleveref versions.
\crefalias{section}{appendix}
\section{Additional identities for the Stieltjes map}
\label{app:audit}
For reference we collect the elementary identities used in the spectral calculation.  From \eqref{eq:inverse-root},
\[
 \partial_{z_j}x_i=-\frac{x_i}{\delta_i-z_j}.
\]
Therefore, for $j\ne k$,
\[
 \partial_{z_k}\partial_{z_j}x_i
 =\frac{x_i}{(\delta_i-z_j)(\delta_i-z_k)}
 =\frac{\partial_{z_j}x_i-\partial_{z_k}x_i}{z_k-z_j},
\]
whereas
\[
 \partial_{z_j}^2x_i=0.
\]
At a Stieltjes root $z_j$,
\[
 \sum_i\frac{x_i}{\delta_i-z_j}=0,
 \qquad
 \sum_i\frac{x_i}{(\delta_i-z_j)^2}=m_x'(z_j)>0.
\]
These identities give both the eigenvector relation $A_0q_j=z_jq_j$ and the EPD cancellation in the chain-rule production.

\section{Constant multiplicative frictions with repeated factors}
\label{app:repeated-factors}
This appendix extends the constant-friction corollary beyond the strict-pole
representation, without changing the hypotheses of \Cref{thm:main}.
The reduction uses the constancy of the species factors.  It does not assert
that arbitrary composition-dependent laws with repeated pole parameters
produce a closed grouped system.

\begin{proposition}[Grouping equal constant factors]
\label{prop:repeated-factor-extension}
Under the domain, exponent, and strictly positive compatible initial-data
hypotheses of \Cref{thm:main}, let
$\widehat\delta_i>0$ and
$d_i=\widehat\delta_i/B_{\widehat\delta}$, allowing repeated factors.
Then the full $N$-species system has a unique global strong solution,
uniform species separation, uniform positive-time classical regularity,
and the entropy, $L^2$, and $C^1$ relaxation conclusions of that theorem.
\end{proposition}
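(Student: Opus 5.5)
The plan is to reduce to a closed system for group totals and then treat the within-group composition by a scalar linear equation. Let $\beta_1<\cdots<\beta_G$ be the distinct values among the $\widehat\delta_i$, and let $I_g=\{i:\widehat\delta_i=\beta_g\}$. Define the group totals $y_g:=\sum_{i\in I_g}x_i$. Since $B_{\widehat\delta}(x)=\sum_g y_g/\beta_g=:B(y)$, every tracer diffusivity $d_i=\beta_g/B(y)$ with $i\in I_g$ depends on $x$ only through $y$.

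\textbf{Step 1: the grouped system is closed.} Summing the direct flux \eqref{eq:flux-core} over $I_g$ gives
\[
 J_g=y_g\,Q-\frac{\beta_g}{B(y)}\nabla y_g,
 \qquad
 Q:=\sum_h\frac{\beta_h}{B(y)}\nabla y_h .
\]
This is exactly the Darken system for $G$ species with distinct constant factors $\beta_g$. The initial data $y^0$ inherit positivity, the unit sum, the Neumann condition and the Besov regularity. If $G\ge3$, \Cref{cor:constant-friction} in its distinct-factor case (that is, \Cref{thm:main} with $\sigma=0$ and $\tau=B^{-1}$) gives the grouped global solution with all its conclusions. If $G=2$, then $B=(y_1\beta_2+y_2\beta_1)/(\beta_1\beta_2)$ and the reduced scalar coefficient is $C=y_2d_1+y_1d_2=\beta_1\beta_2$, so $y_1$ solves a Neumann heat equation. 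If $G=1$, the full system is itself $x_t=\beta_1^2\Delta x$. In the heat-equation cases global smoothness, the maximum principle and exponential decay are classical.

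\textbf{Step 2: within-group fractions.} For $i\in I_g$ set $w_i:=x_i/y_g$, which is legitimate since $y_g$ is bounded below by Step 1. Then $j_i=w_iJ_g-(\beta_g/B)\,y_g\nabla w_i$. Combining the balances for $x_i$ and $y_g$ gives the scalar linear equation
\[
 y_g\,\partial_tw_i+J_g\cdot\nabla w_i
 =\Div\!\Big(\frac{\beta_g}{B(y)}\,y_g\nabla w_i\Big),
 \qquad \partial_\nu w_i=0 .
\]
Its coefficients are determined by the already global grouped solution, and the equation is uniformly parabolic because $y_g$ and $B$ are bounded above and below. The weak maximum principle gives
\[
 \min_{\overline\Omega} w_i^0\le w_i(t,\cdot)\le \max_{\overline\Omega}w_i^0 .
\]
Hence $x_i=w_iy_g$ is uniformly separated from zero for all time.

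\textbf{Step 3: continuation and regularity of the full system.} Start from the local solution and restart alternative of \Cref{thm:local}, which only needs arbitrary $C^3$ positive tracer diffusivities. On the existence interval, the full solution coincides with $(w_iy_g)$ by uniqueness for the decoupled linear equations. Uniform separation follows from Step 2. For the remaining bounds I would use the positive-time $C^{1+\beta/2,2+\beta}$ bounds for $y$ from Step 1; these make the coefficients of the $w$-equation Hölder continuous with Hölder derivatives on positive-time slabs. Linear conormal Schauder theory (the $k=1,2$ cases of \cite[Theorem~4.1]{DongJeon2026}, applied exactly as in \Cref{lem:lipschitz} and \Cref{prop:uniform-tail-smoothing}) then gives time-Hölder, spatial-Lipschitz and uniform second-order bounds for $w$, and therefore for $u$. \Cref{lem:terminal-trace} then excludes $T_{\max}<\infty$. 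Uniqueness in the full class is inherited from \Cref{thm:local}.

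\textbf{Step 4: relaxation.} The entropy identity of \Cref{cor:entropy} holds for general Darken diffusivities. The logarithmic Sobolev argument leading to \eqref{eq:entropy-decay}, together with the uniform tail smoothing and the Gagliardo--Nirenberg interpolation, therefore gives the entropy, $L^2$ and $C^1$ decay verbatim.

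The main obstacle is Step 3. The required estimates for the $w$-equation must hold uniformly up to a putative finite $T_{\max}$ and also near $t=0$, where only $B^{2-2/p}_{p,p}$ data are available. My first attempt would be to avoid De Giorgi entirely: the scalar equation is linear with coefficients controlled by the grouped solution, so standard Hölder estimates for bounded weak solutions apply on backward cylinders. Near $t=0$ I would instead use the strong-solution class together with the linear maximal-regularity estimate.
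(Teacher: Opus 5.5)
Your argument is correct in outline. It follows the paper's skeleton: the closed group system, split by the number of distinct factors ($\ge3$, two, or one); the within-group ratio equation and its scalar maximum principle; and the unchanged entropy/log-Sobolev relaxation. The genuine difference is in the continuation and tail-regularity step.

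\textbf{How the paper proceeds.} It never estimates the ratios beyond $L^\infty$. It excludes $T_{\max}<\infty$ by applying frozen scalar maximal $L^p$ regularity, with interpolation absorption, directly to the linear species equation $\partial_tx_i-a_a\Delta x_i=b_a\cdot\nabla x_i+c_ax_i$. The coefficients of this equation come from the global group solution, and the resulting bound feeds the affine trace recursion of \Cref{lem:terminal-trace}. For tail smoothness, the paper first gets H\"older bounds from time-cutoff maximal regularity. It then runs the nonlinear conormal argument in reduced concentration variables, using the exact symmetrizer $H(u)$.

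\textbf{How you proceed.} You use the linearity of the fraction equations all the way: linear H\"older and conormal Schauder estimates for $w_i$, then $x_i=w_iy_g$. This gives continuation and positive-time $C^{1+\beta/2,2+\beta}$ bounds in one linear step, with no symmetrizer and no return to the nonlinear system. The price is Schauder theory for equations with lower-order terms. The paper's continuation, by contrast, needs only maximal regularity.

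Two points need tightening.
\begin{enumerate}
\item The fraction equation carries the time weight $y_g$ and the drift $J_g\cdot\nabla w_i$. So \cite[Theorem~4.1]{DongJeon2026} does not apply ``exactly as in'' \Cref{lem:lipschitz}, which uses it with zero lower-order terms. Divide by $y_g$ to obtain $\partial_tw_i-\Div(a\nabla w_i)=\widetilde b\cdot\nabla w_i$, with $\widetilde b$ bounded and H\"older on positive tails. Then use a Schauder version that allows such a drift, or treat it as a forcing. Dividing also lets the maximum principle near $t=0$ avoid any bound on $\partial_ty_g$, which is only $L^p$ there.
\item You need the group sums of the local full solution to equal the global grouped solution, and hence to be bounded below on all of $[0,T_{\max})$. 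This comes from uniqueness for the grouped nonlinear system, not from uniqueness for the decoupled linear equations. Apply it while all species are positive. Then propagate it, because the ratio lower bound times the group lower bound rules out a first loss of positivity, as the paper does.
\end{enumerate}
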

\begin{proof}
\emph{Closed group equations.}
Let $\eta_1<\cdots<\eta_K$ be the distinct values of the factors, and set
\[
 G_a=\{i:\widehat\delta_i=\eta_a\},\qquad
 X_a=\sum_{i\in G_a}x_i,\qquad
 B=\sum_{a=1}^K\frac{X_a}{\eta_a},\qquad
 a_a=\frac{\eta_a}{B},\qquad
 b=\frac1B\sum_{a=1}^K\eta_a\nabla X_a.
\]
The symbol $b$ here is a spatial vector field.  The direct flux formula gives
\begin{equation}\label{eq:grouped-species-equations}
 \partial_t x_i=\Div(a_a\nabla x_i-x_i b)\quad(i\in G_a),
 \qquad
 \partial_tX_a=\Div(a_a\nabla X_a-X_a b),
\end{equation}
with homogeneous Neumann conditions.  The second system is closed.  If
$K\ge3$, it is the affine Darken system with distinct poles $\eta_a$,
$\sigma=0$, and $\tau=B^{-1}$, so \Cref{thm:main} gives its global
solution.  If $K=2$, using $X_2=1-X_1$ gives
\[
 a_1\nabla X_1-X_1b
 =\frac{\eta_1X_2+\eta_2X_1}{B}\nabla X_1
 =\eta_1\eta_2\nabla X_1;
\]
thus $X_1$ solves the Neumann heat equation.  If $K=1$, $X_1\equiv1$.
In every case the group solution has a time-uniform positive lower bound
for each $X_a$ and uniform $C^{1+\beta/2,2+\beta}$ bounds on positive-time
unit slabs, for some $\beta>0$; in the heat cases this follows from the scalar maximum principle and the same linear regularity estimates.  The initial group data inherit the trace
regularity and Neumann compatibility by linear summation.

\emph{Separation of the individual species.}
The general Darken local theory, \Cref{thm:local}, supplies a full local
solution.  Until its maximal time, its group sums coincide with the global
group solution by uniqueness, initially on the interval on which all species
are positive.  On that interval put $r_i=x_i/X_a$ for $i\in G_a$ and
$J_a=a_a\nabla X_a-X_ab$.  Subtracting $r_i$ times the group equation
from the species equation yields
\begin{align}
 X_a\partial_t r_i
 &=\Div(a_aX_a\nabla r_i)+J_a\cdot\nabla r_i,\notag\\
 \partial_t r_i-a_a\Delta r_i
 &=\bigl(\nabla a_a+2a_a\nabla\log X_a-b\bigr)\cdot\nabla r_i,
 \qquad \partial_\nu r_i=0.
 \label{eq:within-group-ratio}
\end{align}
On every finite time interval the coefficients are bounded and the diffusion
is uniformly positive, including at time zero by the group's continuous
$C^1$ trace.  Equivalently,
\[
 \partial_t r_i-\Div(a_a\nabla r_i)
 =(2a_a\nabla\log X_a-b)\cdot\nabla r_i.
\]
Testing this equation with the positive and negative level parts gives the
scalar maximum principle: the bounded drift is absorbed by Young's inequality
and the zero initial level energy remains zero by Gronwall.  No bound on
$\partial_tX_a$ near time zero is needed.  Consequently
\begin{equation}\label{eq:within-group-ratio-bounds}
 0<\min_{\overline\Omega}r_i(0)
 \le r_i(t,y)\le\max_{\overline\Omega}r_i(0)\le1.
\end{equation}
The reciprocal of $X_a$ and the quotient have the strong regularity needed
for these tests on compact intervals.  The lower bound in
\eqref{eq:within-group-ratio-bounds}, multiplied by the group lower bound,
excludes a first loss of positivity.  Hence the group identification and
ratio bounds hold throughout the full maximal interval; in particular
$0<x_i\le1$ and the full state remains in one compact interior simplex set.

\emph{Continuation without a species-gradient assumption.}
Suppose the full maximal time $T$ were finite.  On any fixed positive tail
before $T$, the already global group solution gives uniform coefficient
bounds in
\begin{equation}\label{eq:group-linear-nondivergence}
 \partial_t x_i-a_a\Delta x_i=b_a\cdot\nabla x_i+c_a x_i,
 \qquad b_a=\nabla a_a-b,\quad c_a=-\Div b.
\end{equation}
In particular $b_a,c_a$ are bounded, $a_a$ is uniformly spatially Lipschitz
and uniformly H\"older continuous in time, and $a_a$ is bounded away from
zero.  The frozen scalar Neumann realizations $-a_a(s,\cdot)\Delta$ have
uniform maximal regularity: the parameter-elliptic and boundary-symbol
checks of \Cref{lem:LS} apply in one component, and the compact-coefficient
argument of \Cref{lem:uniform-MR} gives uniform constants.

Write $Y_\gamma=B^{2-2/p}_{p,p,N}(\Omega)$ and
$\mathbb E_I^{\rm sc}=W^{1,p}(I;L^p)\cap L^p(I;W_N^{2,p})$.
On $I=(s,s+h)$, freezing \eqref{eq:group-linear-nondivergence} at $s$ and
using its uniform estimate gives
\[
 \|x_i\|_{\mathbb E_I^{\rm sc}}
 \le C\left(\|x_i(s)\|_{Y_\gamma}
      +\omega(h)\|x_i\|_{L^p(I;W^{2,p})}
      +\|\nabla x_i\|_{L^p(I\times\Omega)}
      +\|x_i\|_{L^p(I\times\Omega)}\right),
\]
where $\omega(h)\to0$ is the group's time modulus, not an unknown species
modulus.  The interpolation inequality
\[
 \|\nabla w\|_{L^p}\le\varepsilon\|w\|_{W^{2,p}}
                         +C_\varepsilon\|w\|_{L^p}
\]
allows one first to choose $\varepsilon$ and then a common $h_0>0$ to
absorb both derivative terms.  Since $0\le x_i\le1$, the result is
\[
 \|x_i\|_{\mathbb E_I^{\rm sc}}
 \le C\bigl(\|x_i(s)\|_{Y_\gamma}+h^{1/p}\bigr),
 \qquad 0<h\le h_0.
\]
The augmented trace estimate \eqref{eq:uniform-augmented-trace-core} now gives
the finite affine recursion of \Cref{lem:terminal-trace}, for all components.
Its final interval is treated by a supremum over endpoints strictly below
$T$.  Thus the terminal trace norm is bounded.  The general Darken restart
in \Cref{thm:local}, together with the compact interior range already proved,
contradicts finiteness of $T$.

\emph{Uniform tail regularity and relaxation.}
Fix a positive waiting time.  The group coefficient bounds above are then
uniform on all later slabs.  For intervals of the fixed sufficiently small
length $h_0$, let $v=\chi(t)x_i$, where $\chi$ vanishes near the initial
endpoint and equals one on the latter half.  Its equation has the same
principal and lower-order terms as \eqref{eq:group-linear-nondivergence},
with the additional forcing $\chi' x_i$ and zero initial trace.  The same
freezing and interpolation absorption therefore yields
\[
 \|v\|_{\mathbb E_I^{\rm sc}}
 \le C\bigl(\|v\|_{L^p(I\times\Omega)}
             +\|\chi' x_i\|_{L^p(I\times\Omega)}\bigr)\le C,
\]
independently of the interval's starting time.  The waiting time is chosen
so that these buffered intervals start strictly after zero.  The trace and
mixed-derivative embeddings used in \eqref{eq:uniform-short-time-C1} give
uniform space--time H\"older bounds on the latter halves; overlapping
intervals cover the tail.

The conormal argument of \Cref{sec:conormal} can now be applied in the
reduced concentration variables themselves.  Indeed
$u_t=\Div(C(u)\nabla u)$, and the exact positive symmetrizer $H(u)$ in
\eqref{eq:HC} is available for every positive Darken law.  On the compact
interior range, freezing $H(u_*)^{1/2}$ and choosing a fixed cylinder radius
using the just-proved H\"older modulus gives the same strongly parabolic
tensor and zero conormal boundary condition as in \Cref{lem:lipschitz}.
The first- and second-order Dong--Jeon estimates, on successively smaller positive
tails as in \Cref{prop:uniform-tail-smoothing}, give the uniform
$C^{1+\beta/2,2+\beta}$ conclusion.  This use of the conormal argument
requires no root chart for the full repeated-factor system.
Finally, the entropy and Fisher identities in \Cref{cor:entropy}, species
mass conservation, and the logarithmic Sobolev and interpolation estimates
of \Cref{sec:longtime} apply unchanged to this uniformly positive
Darken core.  They give exponential entropy, $L^2$, and $C^1$ relaxation
to the conserved species means.  Uniqueness and local continuous dependence
are inherited from \Cref{thm:local}.
\end{proof}

\end{document}